\newcommand{\ns}{{\mathbb N}} 
\newcommand{\qs}{{\mathbb Q}}  
\newcommand{\cs}{{\mathbb C}} 
\newcommand{\rs}{{\mathbb R}} 
\newcommand{\al}{\alpha}
\newcommand{\be}{\beta}
\newcommand{\la}{\lambda}
\newcommand{\vareps}{\varepsilon}
\newcommand{\by}{\bar y}
\newcommand{\bC}{C}
\newcommand{\bD}{D}
\newcommand{\bN}{N}
\newcommand{\hP}{\widehat{P}}
\newcommand{\hQ}{\widehat{Q}}
\newcommand{\hR}{\widehat{R}}
\newcommand{\hN}{\mathcal N}
\newcommand{\hD}{\mathcal D}
\newcommand{\bP}{P}
\newcommand{\bQ}{Q}
\newcommand{\bR}{R}
\newcommand{\tQ}{\tilde{Q}}
\newcommand{\tR}{\tilde{R}}
\newcommand{\tT}{T}
\newcommand{\tM}{\tilde{M}}
\newcommand{\tP}{\tilde{P}}
\newcommand{\Q}{T}
\newcommand{\GK}{\mathbb{K}}
\DeclareMathOperator{\Eq}{Eq}
\DeclareMathOperator{\tEq}{\tilde Eq}
\DeclareMathOperator{\Rat}{Rat}
\DeclareMathOperator{\Pol}{Pol}
\DeclareMathOperator{\df}{drf}
\DeclareMathOperator{\cc}{c}
\DeclareMathOperator{\vv}{v}
\DeclareMathOperator{\ff}{f}
\DeclareMathOperator{\ee}{e}
\DeclareMathOperator{\Tpol}{T}
\DeclareMathOperator{\Tch}{T}
\DeclareMathOperator{\Ppol}{P}
\newcommand{\gM}{\bar M}
\newcommand{\R}{R}
\newcommand{\cC}{\mathcal C}
\newcommand{\cM}{\mathcal M}
\newcommand{\hM}{\hat{M}}
\newcommand{\cS}{\mathcal S}
\newcommand{\tcS}{\tilde{\mathcal S}}
\newtheorem{Theorem}{Theorem}
\newtheorem*{Theorem*}{Theorem~\ref{thm:ED} (repeated)}
\newtheorem{Proposition}[Theorem]{Proposition}
\newtheorem{Lemma}[Theorem]{Lemma}
\newcommand{\beq}{\begin{equation}}
\newcommand{\eeq}{\end{equation}}
\newcommand{\gf}{generating function}
\newcommand{\fps}{formal power series}
\def\emm#1,{{\em #1}}
\def\section{\@startsection{section}{1}%
 \z@{.7\linespacing\@plus\linespacing}{.5\linespacing}%
 {\normalfont\bfseries\scshape\centering}}
\def\subsection{\@startsection{subsection}{2}%
 \z@{.5\linespacing\@plus\linespacing}{.5\linespacing}%
 {\normalfont\bfseries\scshape}}
\def\subsubsection{\@startsection{subsubsection}{3}%
 \z@{.5\linespacing\@plus\linespacing}{-.5em}
 {\normalfont\bfseries\itshape}}
\def\qee{$\hfill{\Box}$}
\begin{document}
\title
[Counting coloured planar maps: differential equations]
{Counting coloured planar maps: differential equations}

\author[O. Bernardi]{Olivier Bernardi}
\address{O. Bernardi: Brandeis University, Department of Mathematics, 415 South Street, Waltham, MA 02453, USA}
\email{bernardi@brandeis.edu}

\author[M. Bousquet-M\'elou]{Mireille Bousquet-M\'elou}
\address{M. Bousquet-M\'elou: CNRS, LaBRI, Universit\'e de Bordeaux,
351 cours de la Lib\'eration, 33405 Talence, France}
\email{mireille.bousquet@labri.fr}

\thanks{The authors were partially supported by the French ``Agence Nationale
de la Recherche'', first via project A3 ANR-08-BLAN-0190 and then
project Graal ANR-14-CE25-0014. OB was partially supported by the NSF
grants DMS-1308441 and  DMS-1400859. MBM also acknowledges the hospitality
of the Schrödinger Institute in Vienna, where part of this work was
accomplished during the programme ``Combinatorics, Geometry and
Physics'' in 2014.}

\keywords{Enumeration -- Coloured planar maps -- Tutte polynomial --
 Differentially algebraic series}
\subjclass[2000]{05A15, 05C30, 05C31}

\begin{abstract}
We address the enumeration of $q$-coloured planar maps counted by
the   number of edges and the number  of monochromatic edges.
We prove that the associated \gf\ is \emm differentially algebraic,,
that is, satisfies a
non-trivial polynomial differential equation with
respect to the edge variable. We give explicitly a differential system
that characterizes this series.   We then
prove a similar result for planar triangulations, thus generalizing
a result of Tutte  dealing with their proper $q$-colourings. In
statistical physics terms, we solve
the $q$-state Potts model on random planar lattices.

This work follows a first paper by the same authors, where the \gf\
was proved to be algebraic for certain values of $q$,
including $q=1, 2$ and $3$. It is
known to be transcendental in general. In contrast, our
differential system holds for an indeterminate $q$.

For certain special cases of combinatorial  interest (four colours; proper
$q$-colourings; maps equipped with a spanning forest), we
derive from this system, in the case of triangulations, an explicit
 differential equation of order
$2$ defining the \gf. For general planar maps, we also obtain a
 differential equation of order 3 for the four-colour case and for the
 self-dual Potts model.
\end{abstract}

\date{\today}
\maketitle

\section{Introduction}
A planar map is a connected planar graph, given with one of its proper
embeddings in the sphere, taken up to continuous deformation
(Figure~\ref{fig:example-map}).  The enumeration of planar maps is a combinatorial
problem that has attracted a lot
of interest since the sixties, in connection with graph theory~\cite{tutte-4-colored,gimenez-noy-planar}, algebra~\cite{goulden-jackson-KP,jackson-visentin}, theoretical
physics~\cite{BIZ,BIPZ,DFGZJ}, and computational geometry~\cite{castelli,schnyder-embedding}. Several important
approaches have been developed: recursive~\cite{tutte-general}, bijective~\cite{Sch97}, using matrix
integrals~\cite{BIPZ}, or using connections with the characters of the symmetric
group~\cite{Jackson:Harer-Zagier}.
We only give very few references, as  a complete bibliography would
take dozens of pages.

From the combinatorial and physical point of view, it is  natural
to count planar maps equipped with an additional structure: for
instance a spanning tree~\cite{mullin-boisees}, a proper
colouring~\cite{lambda12,tutte-differential},  an independent set of
vertices~\cite{mbm-jehanne,mbm-schaeffer-ising,BDG-blocked}, a configuration of the Ising or Potts model~\cite{Ka86,eynard-bonnet-potts}, a self-avoiding
walk~\cite{DK88}. (Again, we  give very few of the relevant
references.) The first result of this nature  probably dates back to
1967 with Mullin's
enumeration of planar maps equipped with a spanning tree~\cite{mullin-boisees}. The
second attempt is due to Tutte, who, in the early seventies, started
to study maps --- more precisely, triangulations --- equipped with a
proper colouring~\cite{lambda12}.
In the decade that followed, he devoted at least eight other papers
to  this problem~\cite{lambda3,lambda-tau,tutteIV,tutteV,tutte-pair,tutte-chromatic-sols,tutte-chromatic-solsII,tutte-differential}. His
work culminated in 1982, when he proved that the series $H(w)$
counting $q$-coloured rooted triangulations by vertices
satisfies a  polynomial differential
equation~\cite{tutte-chromatic-solsII,tutte-differential}:
\beq\label{Tutte-ED}
2q^2(1-q)w +(qw+10H-6wH')H''+q(4-q)(20H-18wH'+9w^2H'')=0.
\eeq
We say that $H(w)$ is \emm differentially algebraic,.
Equivalently,  the number $h_n$ of rooted triangulations
with $n$ vertices  satisfies the following
simple recurrence relation:
$$
q(n+1)(n+2)h_{n+2}=q(q-4)(3n-1)(3n-2)h_{n+1} + 2\sum_{i=1}^n
i(i+1)(3n-3i+1)h_{i+1}h_{n+2-i},
$$
with the initial condition $h_2=q(q-1)$. For instance,
$h_3=q(q-1)(q-2)$ is the number of proper $q$-colourings of a triangle.
To date, this recursion remains entirely mysterious,
and Tutte's \emm tour de force, has remained isolated.

Let us be more precise about
the content of this \emm tour de force,.
  Tutte started from  a \gf
\ $G(w;x,y)\equiv G(x,y)$ which counts a larger family of $q$-coloured
maps according to three
parameters (as before, $w$ counts vertices). The above
series $H(w)$
is $G(w;1,0)$.
Using the
deletion/contraction properties of the chromatic polynomial,
he easily established  the following functional equation:
\begin{multline}\label{eq-Tutte}
G(x,y)=xq(q-1)w^2+\frac{xy}{qw}G(1,y)G(x,y)-x^2yw\frac{G(x,y)-G(1,y)}{x-1}+x\frac{G(x,y)-G(x,0)}{y}.
\end{multline}
Observe that  the
  divided differences (or discrete derivatives)
$$
\frac{G(x,y)-G(1,y)}{x-1}\qquad\hbox{ and } \qquad
\frac{G(x,y)-G(x,0)}{y}
$$
prevent us from simply specializing $x$ to 1 and $y$ to 0 to obtain an
equation for $H=G(1,0)$. In fact,
 the variables $x$ and $y$, called nowadays \emm catalytic variables,~\cite{zeil-umbral,mbm-jehanne},
are essential to write  this equation, even though they are not very
important combinatorially. The whole challenge is to determine the
 nature of  the series $H=G(1,0)$: is it algebraic, as the \gf\ of many classes of
planar maps? Is it D-finite, that is, a solution of a linear
differential equation, as the \gf\ of maps equipped with a spanning
tree~\cite{mullin-boisees}? Is it at least differentially algebraic? Tutte
answered the latter question positively by deriving from the functional
equation~\eqref{eq-Tutte} the
 differential equation~\eqref{Tutte-ED} satisfied by $H$.

It is known that the solutions of polynomial equations with  \emm one,
catalytic variable
are systematically algebraic~\cite{mbm-jehanne}. Such equations
commonly occur
in the enumeration of families of planar maps
with no additional structure. Moreover, in the past decade, progress has been made
on \emm linear, equations with \emm two,
catalytic variables,   which one typically  encounters when counting
plane lattice
walks confined to a quadrant. In this context,  it is precisely understood when
the \gf\ is D-finite (see e.g.~\cite{BoRaSa12,BoMi10,KuRa12} and references therein). Hence the key difficulty with Tutte's
equation~\eqref{eq-Tutte} is the occurrence of two catalytic variables,
combined with non-linearity.

\medskip
 A few years ago, we started to resurrect Tutte's technique in
order to
solve a more general problem: {\em the Potts model on planar maps}. In
combinatorial terms, this means that we count all $q$-colourings,
not necessarily proper, but with a weight $\nu^m$, where $\nu$ is an
indeterminate and $m$ is the  number of \emm monochromatic,  edges (edges whose
endpoints have the same colour). The case $\nu=0$ is thus the problem
solved by Tutte. Moreover, we do not only  study
degree-constrained maps (triangulations) as Tutte did, but also general planar
maps (counted by edges and vertices).

In a first paper on this topic~\cite{bernardi-mbm-alg}, we wrote the counterpart
of~\eqref{eq-Tutte} for each of these two problems. Then, we
performed a first step, by establishing an equation with only \emm
one, catalytic variable,~$y$. But this equation holds only for values
of $q$  of the form $2+2\cos (j\pi/m)$, for integers $j$
and $m$ (with  $q\not = 0, 4$).  Moreover, the size of this equation grows with
$m$. Nevertheless, equations with  one catalytic variable are much
better understood that those with two~\cite{mbm-jehanne}, and we were able to prove that the \gf\ of $q$-coloured maps is algebraic for
all such values of~$q$, including $q=2$ and $q=3$. It is known to be
transcendental in general~\cite{bernardi-mbm-alg}.

In this paper, we perform the second step, and prove that for  $q$ an
indeterminate, the \gf\ of $q$-coloured planar maps (or
triangulations) is differentially algebraic.  We give an explicit
differential system that characterizes it.  This system has the same form for
general maps and triangulations.
In some special cases (proper colourings; four colours; spanning
forests; self-dual Potts model) we derive from it an explicit differential
equation of small order defining the \gf. In particular, we
recover Tutte's result~\eqref{Tutte-ED} for properly coloured
triangulations.
For the convenience of the reader, all calculations are performed in
an accompanying Maple session, available on the web pages of the
authors.

 \medskip

Here is  an outline of the paper.
In Section~\ref{sec:def} we give
definitions about maps, the Potts model and its combinatorial
counterpart, the Tutte
polynomial, as well as notation on power series.
In Section~\ref{sec:example} we state our main
result and give an idea of Tutte's sophisticated
approach (some would say obscure) on a simple example. In Sections~\ref{sec:general} and~\ref{sec:triang} we establish  differential
systems for $q$-coloured planar maps and $q$-coloured triangulations,
respectively. We simplify these systems in Section~\ref{sec:non-diff}. The next sections deal with special cases: proper
colourings (Section~\ref{sec:nu=0}), four colours (Section~\ref{sec:q=4}),
then the $q=0$ case, which corresponds to the enumeration of maps equipped
with a spanning forest (Section~\ref{sec:q=0}), and finally the
self-dual Potts model  (Section~\ref{sec:sd}).

Our differential system for planar maps appeared (without proof) in a
survey  on map enumeration published in 2011~\cite{mbm-survey}.
\section{Definitions and notation}
\label{sec:def}

\subsection{Planar maps}
%
A \emph{planar map} is a proper
 embedding of a connected planar graph in the
oriented sphere, considered up to orientation preserving
homeomorphism. Loops and multiple edges are allowed
(Figure~\ref{fig:example-map}). The \emph{faces} of a map are the
connected components of  its complement. The numbers of
vertices, edges and faces of a planar map $M$, denoted by $\vv(M)$,
$\ee(M)$ and $\ff(M)$,  are related by Euler's relation
$\vv(M)+\ff(M)=\ee(M)+2$.
 The \emph{degree} of a vertex or face is the number
of edges incident to it, counted with multiplicity. A \emph{corner} is
a sector delimited by two consecutive edges around a vertex;
hence  a vertex or face of degree $k$ defines $k$ corners.
Alternatively, a corner can be described as an incidence between a
vertex and a face. The \emph{dual} of a
map $M$, denoted $M^*$, is the map obtained by placing a
vertex of $M^*$ in each face of $M$ and an edge of $M^*$ across each
edge of $M$; see Figure~\ref{fig:example-map}. A \emph{triangulation}
is a map in which every face has degree 3. Duality transforms
triangulations into \emph{cubic} maps, that is, maps in which every vertex has
degree 3.

For counting purposes it is convenient to consider \emm rooted, maps.
A map is rooted by choosing a corner, called  the \emm root-corner,.
The vertex and face that are incident at this corner are respectively
the \emm root-vertex, and the \emm root-face,.
 The \emm root-edge, is the edge that follows the
root-corner in counterclockwise
order around the root-vertex.
 In figures, we  indicate the rooting by
 an arrow pointing to the root-corner, and take the root-face
as the infinite face (Figure~\ref{fig:example-map}).
This explains why we often call the root-face the \emm outer face, and
its degree the \emm outer degree, (denoted $\df(M)$).

From now on, every {map}
is \emph{planar} and \emph{rooted}. By convention,
we include among rooted planar maps the \emph{atomic map}
 having one vertex and no edge.

\begin{figure}[h]
  \centering
  \includegraphics{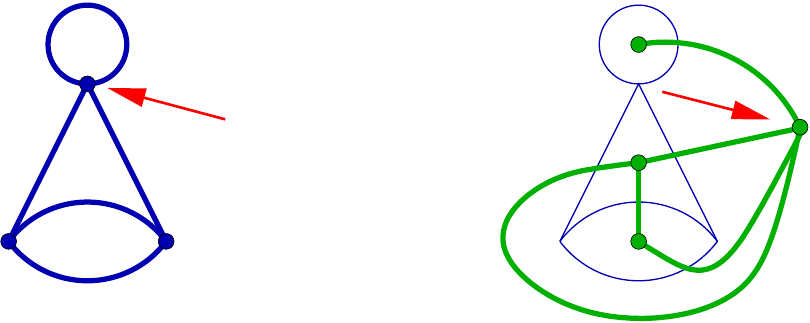}
\caption{A rooted planar map and its dual
(rooted at the dual corner).}
\label{fig:example-map}
\end{figure}

%

\subsection{The Potts model}
Let $G$ be a graph with vertex set $V(G)$ and edge set $E(G)$.
Let $\nu$ be an indeterminate, and take $q\in \ns$.
A \emm colouring, of the vertices of $G$ in $q$ colours is a map $c : V(G)
\rightarrow \{1, \ldots, q\}$. An edge of $G$ is \emm monochromatic,
if its endpoints share the same colour. Every loop is thus
monochromatic. The number of monochromatic edges is denoted by $m(c)$.
The \emm partition function of the  Potts model,
on $G$ counts colourings by the number of monochromatic edges:
$$
\Ppol_G(q, \nu)= \sum_{c  : V(G)\rightarrow \{1, \ldots, q\}}
\nu^{m(c)}.
$$
The Potts model is a classical magnetism model in statistical physics, which
includes (when $q=2$) the famous Ising model (with no magnetic
field)~\cite{wu,welsh-merino}.
Of course, $\Ppol_G(q,0)$ is the chromatic
polynomial of $G$, which counts proper colourings (no monochromatic edge).

It is not hard to see that  $\Ppol_{G}(q,\nu)$ is
 a  polynomial in $q$ and $\nu$.
We call it the \emm Potts polynomial, of $G$.  Observe that it is
a multiple of $q$.
We will often consider $q$
as an indeterminate, or evaluate  $\Ppol_{G}(q,\nu)$ at
real values $q$.

We define the \emm Potts \gf,\ of planar maps by:
\beq\label{M-def}
M(y)\equiv M(q,\nu,w,t;y)
=\frac 1 q \sum_{M} \Ppol_M(q,\nu) w^{\vv(M)} t^{\ee(M)} y^{\df(M)},
\eeq
where the sum runs over all planar maps $M$.
Since there are finitely many maps with a given number of edges,
and $\Ppol_M(q,\nu)$ is a multiple of $q$,
the generating function $M(y)$ is a power series in $t$ with
coefficients in $\qs[q,\nu,w,y]$, the ring of polynomials in $q, \nu,
w$ and $y$ with rational coefficients. The expansion of $M$ at order 2
reads
\begin{multline*}
M(y)=\ w+\left(w^2y^2(q-1+\nu)+w y \nu \right)t
+ \left(
2w^3y^4(q-1+\nu)^2
+w^2y^2(q-1+\nu^2)\right.
\\
\left. +w^2\nu(y+3y^3)(q-1+\nu)
+w\nu^2(y+y^2)
\right)t^2+O(t^3),
\end{multline*}
as illustrated in Figure~\ref{fig:small-maps}. In combinatorial terms,
$M$ counts $q$-coloured planar maps by vertices, edges,
monochromatic edges and outer degree, with the convention that the root-vertex is
coloured in a prescribed colour (this accounts for the division by $q$).

\begin{figure}[h]
  \centering
  \includegraphics{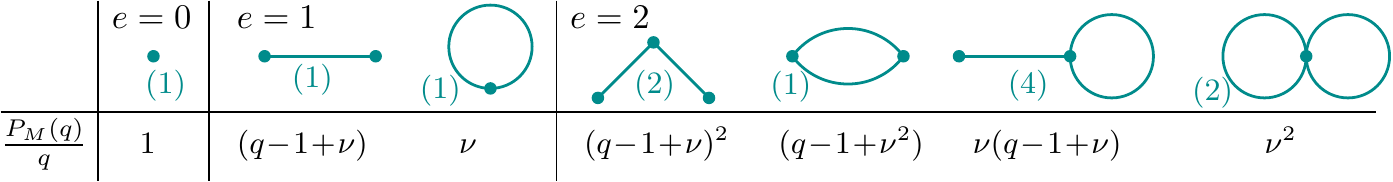}
\caption{The planar maps with $e=0,1,$ and $2$ edges, and their Potts
  polynomials (divided by $q$). The maps are shown \emph{unrooted},
  and the numbers in parentheses
indicate the
  number of corresponding \emph{rooted} maps.}
\label{fig:small-maps}
\end{figure}

\subsection{Specializations}
\label{sec:specializations}
We consider in this paper several specializations of the Potts
polynomial. For some of them, the combinatorial meaning is obvious:
for instance, $\Ppol_G(q,0)$ counts proper $q$-colourings, and $\Ppol_G(4, \nu)$
counts 4-colourings by monochromatic edges. To understand the
significance of some other specializations, it is useful to relate
$\Ppol_G$ to
another invariant of graphs: the \emm Tutte polynomial,
$\Tpol_G(\mu, \nu)$. It is defined as follows (see e.g. \cite{Bollobas:Tutte-poly}):
\beq\label{Tutte-def}
\Tpol_G(\mu,\nu):=\sum_{S\subseteq
  E(G)}(\mu-1)^{\cc(S)-\cc(G)}(\nu-1)^{\ee(S)+\cc(S)-\vv(G)},
\eeq
where the sum is over all spanning subgraphs of $G$ (equivalently,
over all subsets of edges) and  $\vv(.)$, $\ee(.)$ and $\cc(.)$ denote
respectively the number of vertices, edges and connected
components.
The equivalence with the Potts polynomial was established  by
Fortuin and Kasteleyn~\cite{Fortuin:Tutte=Potts}:
\beq\label{eq:Tutte=Potts}
\Ppol_G(q,\nu)=\sum_{S\subseteq E(G)}q^{\cc(S)}(\nu-1)^{\ee(S)}=(\mu-1)^{\cc(G)}(\nu-1)^{\vv(G)}\,\Tpol_G(\mu,\nu),
\eeq
for $q=(\mu-1)(\nu-1)$.
The Tutte polynomial satisfies an interesting duality property:
if $G$ and $G^*$ are dual
connected planar graphs  (that is, if $G$ and $G^*$ can be embedded
as dual planar maps) then
\beq\label{eq:duality-Tutte-poly}
\Tpol_{G^*}(\mu,\nu)=\Tpol_G(\nu,\mu).
\eeq
This gives a duality relation on the Potts \gf\ defined by~\eqref{M-def}:
\beq\label{dual-M}
M(q,\nu,w,t;1)=w^2q M(q,\mu, (wq)^{-1},tw(\nu-1);1).
\eeq
The connection~\eqref{eq:Tutte=Potts} between $\Ppol_G$ and $\Tpol_G$
allows us to understand combinatorially the limit $q=0$ of
$\Ppol_G/q$: for a planar map $M$,
\begin{align*}
  \lim_{q\rightarrow 0} \frac 1 q
  \Ppol_M(q,\nu)&=(\nu-1)^{\vv(M)-1}\Tpol_M(1, \nu)\\
&=\sum_{C \ \small{\rm connected\  on\  } M} (\nu-1)^{\ee(C)} & \hbox{by \eqref{Tutte-def},}\\
&=(\nu-1)^{\ff(M^*)-1}\Tpol_{M^*}(\nu,1)& \hbox{by \eqref{eq:duality-Tutte-poly},}\\
&=(\nu-1)^{\ff(M^*)-1}\sum_{F\ \small{\rm forest\  on\  } M^*}
(\nu-1)^{\cc(F)-1}& \hbox{by \eqref{Tutte-def} again,}
\end{align*}
where the first sum runs over all connected subgraphs $C$  of $M$, and the
second over all spanning \emm forests, $F$ of $M^*$ (subsets of edges
of $M^*$ with no cycle).
Hence, if $S(q, \nu,w,t)$
is the Potts \gf\ of some family of planar maps $\cM$
(meaning that each map $M$ is given a weight $\frac 1 q
P_M(q,\nu)w^{\vv(M)}t^{\ee(M)}$)
and $\cM^*$ denotes
the set of duals of maps of $\cM$,
\begin{align}
  S(0,1+\be, w,t)&=\sum_{M\in \cM, \, C} \be^{\ee(C)} w^{\vv(M)} t^{\ee(M)}\nonumber\\
&=\frac 1 {\be} \sum_{M^* \in \cM^*,\, F} \be^{\cc(F)-1}
\left(\be w\right)^{\ff(M^*)} t^{\ee(M^*)},\label{forests}
\end{align}
where the first sum runs over all maps $M$ of $\cM$ equipped with a
connected subgraph $C$, and the second over all maps $M$ of $\cM^* $
equipped with a spanning forest $F$.

Our final specialization is the self-dual Potts model, in which a map
and its dual get the same weight. In view of the duality relation~\eqref{dual-M}, this
means that $\mu=\nu$, so that $q=(\nu-1)^2$, and that $w=1/\sqrt q
=1/(\nu-1)$. Denoting $\beta=\nu-1$, we will thus consider
\beq\label{M-sd}
M(\beta^2, \beta+1,  \be^{-1}, t;1) = \frac 1 \be \sum_{M} \Tpol_M(1+\be,
1+\be)\, t^{\ee(M)}
\eeq
where the sum runs over all planar maps.
One motivation for studying this specialization is that it should coincide with a critical random-cluster model~\cite{sheffield,chen}, in which
one chooses a map $M$ and a subset $S$ of its edges  with probability
proportional to $q^{\cc(S)}\beta^{\ee(S)}t^{\ee(M)}\be^{-\vv(M)}$.
The partition function of this model is
$$
\hM(q,\beta,t):=\sum_{M}t^{\ee(M)}\be^{-\vv(M)}\sum_{S\subseteq
 E(M)}q^{\cc(S)}\beta^{\ee(S)}= q M(q, \be+1, \be^{-1},t;1).
$$
The duality relation~\eqref{dual-M} implies
$$
\hM(q,\be,t)=q \be^{-2} \hM(q, q/\be,t),
$$
and one can thus expect the critical point $\be_c(q)$ to satisfy
$\be_c(q)^2=q$. The critical partition function would then be given
by~\eqref{M-sd}. Note that the connection between criticality and
self-duality has recently been proved for  the regular square
lattice~\cite{beffara-duminil}.

\subsection{Power series}
Let $A$ be a commutative ring and $x$ an indeterminate. We denote by
$A[x]$ (resp. $A[[x]]$) the ring of polynomials (resp. \fps) in $x$
with coefficients in $A$. The coefficient of $x^n$ in a   series $F(x)$ is denoted by
$[x^n]F(x)$. If $A$ is a field, then $A(x)$ denotes the field
of rational functions in $x$.
This notation is generalized to polynomials, fractions
and series in several indeterminates. For instance, the Potts \gf\ of
planar maps $M(q, \nu, w,t;y)$ defined by~\eqref{M-def} belongs to
$\qs[q,\nu,w,y][[t]]$. To lighten notation, we often omit the
dependence of our series in $q$, $\nu$ and $w$, writing for instance
$M(t;y)$, or even $M(y)$.


If $\GK$ is a
field,  a power series $F(t) \in \GK[[t]]$
  is \emm algebraic, (over $\GK(t)$) if it satisfies a
non-trivial polynomial equation $P(t, F(t))=0$ with coefficients in
$\GK$. It is \emm differentially algebraic, if it satisfies a non-trivial polynomial
differential equation $P(t, F(t), F'(t), \ldots, F^{(k)}(t))=0$ with
coefficients in $\GK$.

For a series $F$ in several variables $x,y, \ldots$, we denote by
$F'_x$ the derivative of $F$ with respect to $x$.

\section{Main result, and outline of the proof}
\label{sec:example}

Our study of  coloured maps started in~\cite{bernardi-mbm-alg} with
the following equation with two catalytic variables $x$ and $y$:
\begin{align*}
\gM(x,y)&= 1
+xywt\left((\nu-1)(y-1)+qy\right)\gM(x,y)\gM(1,y)
\nonumber\\
&\ \ \ +xyt(x\nu-1)\gM(x,y)\gM(x,1)\\
&
\ \ \ +xywt(\nu-1)\frac{x\gM(x,y)-\gM(1,y)}{x-1}+xyt\frac{y\gM(x,y)-\gM(x,1)}{y-1}.
\nonumber
\end{align*}
It defines uniquely  a power series in $t$, denoted
$  \gM(x,y) \equiv \gM(q,\nu,w,t;x,y)$, which has polynomial coefficients in
$q, \nu, w, x$ and $y$.
The Potts \gf\ $M(y)$ defined by~\eqref{M-def} is essentially the
specialization $x=1$ of $\bar M(x,y)$. More precisely,
$$
M(y)\equiv M(q,\nu,w,t;y)= w\gM(q,\nu,w,t;1,y)= w\bar M(1,y).
$$
We then  proved that when $q\not =
0, 4$ is of the form $2 +2 \cos (j\pi/m)$, for integers $j$ and $m$,
the series $ M(y)$ satisfies an equation of the form:
\beq\label{polcat}
P(\nu, w,t,y,M(y), M_1, \ldots, M_r)=0,
\eeq
for some polynomial $P$ depending on $j$ and $m$, and $r$ (unknown)
series in $t$,
denoted by $M_1=M(1), \ldots, M_r$,  \emm which are independent of,
$y$~\cite[Cor.~10]{bernardi-mbm-alg}.
We call~\eqref{polcat} a \emm
polynomial equation with one catalytic variable,,~$y$. In particular, when $q=1$, every edge is monochromatic
and~\eqref{polcat} coincides with the standard functional
equation obtained by deleting recursively  the root-edge in planar
maps~\cite{tutte-general}:
 \beq\label{1cat-planaires1}
M(y)= w+ y^2 t\nu   M(y)^2 + \nu t y \, \frac{yM(y)-M_1}{y-1}.
\eeq
The classical way of solving~\eqref{1cat-planaires1} is Brown's \emm
quadratic method,, presented below in Section~\ref{sec:quad-bip}.
It derives from~\eqref{1cat-planaires1} a
polynomial equation satisfied by $M_1$.
The quadratic method was generalized
 in~\cite{mbm-jehanne} to arbitrary (well-posed) polynomial equations
 with one catalytic variable: under minimal assumptions,
their solutions are \emm always algebraic,. Using this approach,
we  proved in~\cite{bernardi-mbm-alg} that for
$q=2+2\cos (j\pi/m)$, the series $M(q,\nu,w,t;y)$ is algebraic over
$\rs(\nu,w,t,y)$ (and even over $\qs(q,\nu,w,t,y)$).
However, we could not obtain an explicit polynomial
equation satisfied by $M(y)$, nor even by $M(1)=M_1$. We only constructed one for the three \emm integer,
values of $q$ of the above form, namely $q=1,2,$ and $3$. And for $q=3$, we
were only able to do it when $\nu=0$, that is, when one counts
\emm proper, 3-colourings. We expect the degree of $M_1$ to grow with
$m$. For $q$ an indeterminate, the series $M_1$ is
transcendental.

The main result of this paper is that when $q$ is an indeterminate,
and consequently  for any real~$q$, the series
$M_1$ is differentially algebraic over $\qs(q,\nu, w,t)$. Moreover, we construct an explicit
differential system that characterizes $M_1$.

\begin{Theorem}\label{thm:ED}
Let $q$ be an indeterminate, $\beta=\nu-1$ and
$$
\bD(t,x)=(q\nu+\be^2)x^2-q (\nu+1 ) x+ \be t ( q-4 ) ( wq+\be ) +q.
$$
There exists a unique triple $(\bP(t,x), \bQ(t,x),\bR(t,x))$ of
 polynomials in $x$ with coefficients in
$\qs[q,\nu,w][[t]]$,
having
 degree $4, 2$ and $2$ respectively in $x$, such that
\beq\label{init-0}
 \begin{array}{ll}
[x^4]\bP(t,x)=1,& \quad \bP(0,x)=x^2(x-1)^2, \\
 {[x^2]}\bR(t,x)=\nu+1-w(q+2\beta), &\quad\bQ(0,x)= x(x-1),
 \end{array}
\eeq
and
\beq\label{de}
\frac {1}{\bQ}\frac{\partial }{\partial t} \left( \frac{ \bQ^2}{\bP \bD^2}\right)=\frac 1{\bR}\frac{\partial }{\partial x} \left( \frac{ \bR^2}{\bP \bD^2}\right).\eeq

Let $\bP_j(t)\equiv P_j$ (resp. $\bQ_j$, $R_j$) denote the coefficient of $x^j$ in
$\bP(t,x)$ (resp. $\bQ(t,x)$, $\bR(t,x)$).
The Potts \gf\ of planar maps,
$M_1\equiv M(q,\nu,  w,t;1)$, can be expressed in terms of the series $\bP_j$
and $\bQ_j$ using $\tM_1:=t^2M_1$ and
\begin{multline}\label{M11-PQ-0}
12  \left( {\beta}^{2}+q\nu \right) \tM_1  + P_3 ^{2}/4
+2 t \left(1+\nu -w(2 \beta+q) \right)P_3  -P_2 +2 Q_0
=4 t \left(1+  w(3 \beta +q) \right) .
\end{multline}
An alternative characterization of $M_1$ is in terms of the derivative
of $\tM_1$:
$$
2\left( {\beta}^{2}+q\nu\right) \tM_1'
+ \left( 1+\nu- w\left( 2\beta+q \right)  \right) P_3 /2-R_1
=2+2 \beta w .
$$
The series $M_1$ is differentially algebraic, that is,  satisfies a
non-trivial differential equation  with respect to the edge variable
$t$.
The same holds for each series $P_j$, $Q_j$ and $R_j$.
\end{Theorem}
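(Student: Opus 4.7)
The strategy builds on Tutte's \emph{invariant} method, as deployed for the Potts model in our companion paper~\cite{bernardi-mbm-alg}. The starting point is the two-catalytic-variable equation for $\gM(x,y)$ recalled at the opening of Section~\ref{sec:example}. For generic $q$ one cannot reduce this equation to one catalytic variable as in~\cite{bernardi-mbm-alg} (which only works when $q=2+2\cos(j\pi/m)$), so I would proceed as follows. Associate to the kernel of the equation an algebraic cover $Y(x)$ of degree two, and look for rational functions of $x$ whose evaluations on the two sheets of this cover coincide. Tutte's invariant lemma implies that any two such invariants with sufficiently constrained pole structure must satisfy a polynomial algebraic relation; exhibiting two suitable invariants and identifying the relation produces, essentially, the polynomial $\bP(t,x)$ of degree $4$ in $x$. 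The polynomials $\bQ(t,x)$ and $\bR(t,x)$ of degree $2$ arise in parallel from lower-valence invariants (or from derivatives of the first ones), and $\bD(t,x)$ collects the singular part of the kernel.

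With the triple $(\bP,\bQ,\bR)$ so constructed, the initial conditions~(\ref{init-0}) follow by direct evaluation at $t=0$, where $\gM(x,y)=1$ and the invariants collapse to explicit rational functions of $x$. The PDE~(\ref{de}) is then derived as a compatibility relation: differentiating the invariant identities with respect to $t$ and with respect to $x$ yields two expressions for a common logarithmic derivative, whose equality, after clearing the common denominator $\bP\bD^2$, simplifies to~(\ref{de}). Uniqueness of $(\bP,\bQ,\bR)$ is then proved by induction on the $t$-valuation: the initial conditions and leading-coefficient normalisations, combined with the $x$-expansion of~(\ref{de}), determine $[t^n]P_j$, $[t^n]Q_j$, $[t^n]R_j$ inductively on $n$. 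The formula~(\ref{M11-PQ-0}) for $M_1$ is obtained by reading off the coefficient of a specific power of $x$ in one of the invariant identities expressing $\gM(x,1)$ in terms of $\bP$, $\bQ$, $\bR$; the alternative characterisation in terms of $\tM_1'$ then follows by differentiating~(\ref{M11-PQ-0}) and using~(\ref{de}) to eliminate the derivatives of $\bP$ and $\bQ$.

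Finally, differential algebraicity is a formal consequence. Cleared of denominators, the PDE~(\ref{de}) becomes a polynomial identity in $x$ whose coefficients form a finite system of polynomial ODEs in the finitely many unknown series $\{P_j,Q_j,R_j\}$ (nine independent ones once the leading-coefficient normalisations are imposed), and only first-order $t$-derivatives appear. Standard differential elimination (iterated differential resultants) applied to this finite-dimensional first-order system produces, for each of $P_j$, $Q_j$, $R_j$, a non-trivial polynomial ODE in $t$; combined with~(\ref{M11-PQ-0}) this yields such an ODE for $M_1$ itself. I expect the main obstacle to lie in the invariant construction underlying existence: identifying the correct pair of invariants and verifying that Tutte's polynomial relation takes exactly the prescribed shape $\bP$ of degree $4$, with the accompanying control on the degrees of $\bQ$ and $\bR$, is delicate and rests on a careful pole analysis on the algebraic cover defined by $Y(x)$.
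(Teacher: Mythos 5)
The central gap in your proposal is the existence construction. You propose to work directly at generic $q$, building an algebraic cover from the two-catalytic-variable kernel and invoking ``Tutte's invariant lemma'' to produce the polynomial relation that becomes $\bP$ of degree $4$. But the paper deliberately does \emph{not} take this route. Tutte's theory of invariants, applied to this kernel in~\cite{bernardi-mbm-alg}, only yields a polynomial equation in one catalytic variable for the countable family $q=2+2\cos(2\pi/m)$, and the resulting equation~\eqref{eq:inv} has $m$-dependent Chebyshev structure. The degree-$4$ polynomial $\bP$ does not arise from a direct invariant relation; it arises because the degree-$2m$ polynomial $\bC^2-\bD^m$ has $m-2$ double roots $I_i$ (Lemma~\ref{lem:rootsCD}), located at the roots of $\Tch_m'$, and dividing out $\prod_i(x-I_i)^2$ of degree $2(m-2)$ leaves a quotient of degree $4$ (Proposition~\ref{prop:factor}). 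Your sketch provides no mechanism producing a fixed-degree invariant relation uniformly in $q$, and in fact none is known — the paper explicitly notes the one-catalytic reduction ``holds only for values of $q$ of the form $2+2\cos(j\pi/m)$''. You therefore also omit the indispensable interpolation step (Section~\ref{sec:conclusion}): once existence, uniqueness, and polynomiality of the coefficients in $q$ are established for each $q_m$, the identities extend to indeterminate $q$ because they are polynomial in $q$ and hold for infinitely many values. Without either the $q_m$-specific construction or the density argument, the existence half of the theorem is not proved.

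Two further, smaller gaps. First, polynomiality of the coefficients in $\qs[q,\nu,w]$ (as opposed to some rational-function field) is not ``direct evaluation at $t=0$''; the paper devotes Section~\ref{sec:poles} to ruling out poles at $q=4$, $q=-\beta^2/\nu$, $q=0$, $\beta=0$, $w=0$ via several auxiliary linear systems with carefully computed determinants. Second, for differential algebraicity, ``iterated differential resultants'' on a first-order system do not automatically yield a non-trivial ODE for each unknown; the paper's argument rests on the Denef--Lipshitz approximation theorem, which requires exactly the uniqueness of the power-series solution that has to be proved first. Your uniqueness sketch also misses a real subtlety: in the coefficient-extraction system, both the equation $\Eq_{i,0}$ and the sum $\sum_j\Eq_{i,j}$ involve no unknowns of $\cC_i$, which forces the paper to form the shifted system $\cS_i$ of~\eqref{cSi} before the determinant becomes non-zero.
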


Eq.~\eqref{de}  looks like a \emm partial, differential equation in $t$ and $x$, but it
is in fact a system of nine differential equations
in $t$ written in a compact form.
Indeed, its numerator reads:
$$
2Q_t'PD-QP_t'D-2QPD_t'=2R_x'PD-RP_x'D-2RPD_x'.
$$
This is a polynomial in $x$ of degree  8, and each of its nine coefficients  gives a differential equation
(with respect to the variable $t$)
relating the eleven series $P_j$, $Q_j$ and $R_j$. For instance, extracting the coefficient of $x^8$ gives :
\beq\label{ed-simple}
2Q'_2P_4-Q_2P'_4=0.
\eeq
The fact that we only obtain nine equations for eleven unknown series look
alarming, but the initial conditions~\eqref{init-0} give explicitly
the two series $P_4$ and $R_2$.
We also observe that~\eqref{ed-simple}, combined with the initial
conditions~\eqref{init-0}, implies that $Q_2=1$.
In fact, we will prove that the differential system~\eqref{de}, together with its
initial conditions, determines the series $P_j$, $Q_j$ and $R_j$
uniquely.
For instance,
\begin{multline*}
  P_0=-4t+(q^2w^2+16\beta w-4qw+8\beta )t^2\\+2(-\beta q^2w^3+q^3w^3+2\beta qw^2-4q^2w^2+16\beta ^2w+4\beta qw+2\beta ^2-6qw+4\beta )t^3+ O(t^4).
\end{multline*}

Using~\eqref{M11-PQ-0},
one can in principle construct a differential
equation (DE) for $M_1$, but it would probably be very large. However,
we will  work out
some special cases in details, and obtain for instance a DE of order 3 for four-coloured
planar maps (Section~\ref{sec:q=4}).  We
predict the order of $M_1$ to be 5 in general (Section~\ref{sec:simpl-gen}).

Our solution differs significantly from other published results on the Potts
model, in that it ignores the catalytic variable $y$ (which is set to
its natural value 1) and describes the dependence of the series
$M(1)$ in the
size variable $t$. In contrast, the results of~\cite{borot3,guionnet-jones} give a precise
description of the dependence of $M(y)$ in $y$ (in terms of elliptic
functions), but the dependence in the size variable seems to remain
elusive. This is this dependence that we have characterized, in
differential terms.

\subsection{A toy example: uncoloured maps}
\label{sec:toy}
In this section, we illustrate on an example how one can derive differential equations, rather
than polynomial equations, from equations with one catalytic
variable like~\eqref{polcat}. Our example is  Tutte's equation for the enumeration of
planar maps~\cite{tutte-general}, counted by the number of edges (variable $t$) and
the outer degree (variable $y$):
\beq\label{eq-bip}
M(t;y)\equiv M(y) = 1+ ty^2M(y)^2+ ty\, \frac{yM(y)-M_1}{y-1},
\eeq
where $M_1$ stands for $M(1)\equiv M(t;1)$.
 This equation characterizes  $M(y)$ in the ring
 of \fps\ in $t$. Indeed, the coefficient of
$t^n$ can be computed by induction
 on $n$, yielding:
\beq\label{bip-exp}
M(t;y)= 1 +(y+y^2)t +(2y+2y^2+3y^3+2y^4)t^2+O(t^3).
\eeq
The reader can find this expansion, and much more, performed in the accompanying Maple session, available on the web pages
of the authors.

The standard method for
solving~\eqref{eq-bip} is Brown's  quadratic method (see~\cite{brown-square}, or
\cite[Section~2.9]{goulden-jackson} for a modern account). It
constructs from~\eqref{eq-bip} a polynomial
  equation satisfied by $M_1$. We describe this solution in
  Section~\ref{sec:quad-bip} below. We then present in
  Section~\ref{sec:diff-bip} an alternative solution, which derives
from~\eqref{eq-bip} a \emm system of differential equations, satisfied by
$M_1$. This gives the spirit of our
construction of differential equations for $q$-coloured planar maps.
%
%
The spirit, but not the details: in Section~\ref{sec:differences} we
 underline a few important differences between
the solution of our toy example and that of $q$-coloured maps.

\subsubsection{The quadratic method: a polynomial equation for
  $\boldsymbol{M_1}$}
\label{sec:quad-bip}
We first
form in~\eqref{eq-bip} a perfect square, as if we wanted to solve
it for $M(y)$:
\beq\label{carre}
\left( 2ty^2(y-1)M(y)+ty^2-y+1\right)^{ 2}=
(y-1-y^2t)^2-4ty^2(y-1)^2+4t^2y^3(y-1) M_1.
\eeq
Clearly, there exists a unique \fps\ in $t$, denoted below $Y$, than
cancels the left-hand side. Its $n$th coefficient can
be computed by induction, using the expansion~\eqref{bip-exp}
of $M(y)$:
$$
Y=1+t+4t^2+25t^3+O(t^4).
$$
 Let us denote the right-hand side of~\eqref{carre} by
$$
\Delta(y)\equiv \Delta(t;y)= (y-1-y^2t)^2-4ty^2(y-1)^2+4t^2y^3(y-1)
M_1.
$$
 This is a \emm quartic polynomial, in
$y$, with coefficients in $\qs[[t]]$. Since $Y$ cancels the left-hand
side of~\eqref{carre}, it also cancels its right-hand side. Thus
$Y$ is a root of $\Delta(y)$. By differentiating~\eqref{carre} with
respect to $y$, we see that $Y$ is also a root of $\Delta'_y$, and
hence a double root of $\Delta$.

Since $\Delta(y)$ has a double root, its discriminant (with respect to
$y$) vanishes.
This gives a polynomial equation satisfied by $M_1$:
$$
27\,  {t}^{2}M_1  ^{2}+ \left(1 -18
\,t \right) M_1 +16\,t-1=0.
$$
Of course this can be easily solved, and we obtain the well-known \gf\
of planar  maps counted by edges~\cite{tutte-general}:
$$
M_1=\frac{(1-12t)^{3/2}-1+18t}{54\,t^2}.
$$
%

\subsubsection{An alternative approach: a differential system}
\label{sec:diff-bip}
We now describe another consequence of the fact that $\Delta(y)$ has a
double root, this time in terms of  differential equations. The
procedure looks cumbersome,  compared to the neat fact that
the discriminant of $\Delta(y)$ is zero. But, applied to coloured planar
maps, it will yield a system of  differential equations that looks much
nicer than any polynomial
equations we could possibly compute.

Denote $Z=1/Y$.
Since $Y$ is a double root of $\Delta(y)$,  there exists a polynomial
$P(t;y)\equiv P(y)$ of degree 2 such that
$$
\Delta(y)= (1-yZ)^2P(y).
$$
Since $Z$ is a \fps\ in $t$, the same holds for the coefficients of $P(y)$.
Similarly, there exists a polynomial $Q(y)$ of degree 2 such that
\beq\label{DyQ-bip}
\Delta'_y(y)=(1-yZ) Q(y),
\eeq
and a polynomial $R(y)$ of degree 3 such that
\beq\label{DtR-bip}
\Delta'_t(y)=(1-yZ)R(y).
\eeq
We now want to eliminate $\Delta$ and $Z$ in these three equations so
as to find a (differential) relation between $P$, $Q$ and $R$. We
first eliminate $Z$, writing
\beq\label{elimZ-bip}
\frac{{\Delta'_y}^2}{\Delta}= \frac{Q^2}P \quad \hbox{ and } \quad
\frac{{\Delta'_t}^2}{\Delta}= \frac{R^2}P.
\eeq
We now differentiate the first equation with respect to $t$, and the
second one with respect to $y$:
$$
\frac{2\Delta'_y \Delta''_{y,t}\Delta-{\Delta'_y}^2\Delta'_t}{\Delta^2}=
\frac{\partial}{\partial t}\left(  \frac{Q^2}P \right),\qquad\hbox{
} \qquad
\frac{2\Delta'_t \Delta''_{y,t}\Delta-{\Delta'_t}^2\Delta'_y}{\Delta^2}=
\frac{\partial}{\partial y} \left(  \frac{R^2}P\right) .
$$
The ratio of the left-hand sides is $\Delta'_y/\Delta'_t$, which,
according to~(\ref{DyQ-bip}-\ref{DtR-bip}), is also $Q/R$. This gives:
$$
\frac 1 Q \frac{\partial}{\partial t}\left(  \frac{Q^2}P \right)
=\frac 1{R}\frac{\partial}{\partial y} \left(  \frac{R^2}P\right).
$$
Note the striking analogy with our differential system~\eqref{de} for coloured maps.
As in the coloured case, this equation gives, in a compact form, a
system of  differential equations
in $t$ relating the coefficients of $P(y)$, $Q(y)$ and $R(y)$.
We will not discuss which additional properties  are needed to
characterize them 
uniquely, but let us
express the series $M_1$ in terms of them. The first
equation of~\eqref{elimZ-bip} reads
$$
P{\Delta'_y}^2=Q^2\Delta.
$$
This is a polynomial in $y$ of degree 8. Extracting the coefficient of $y^8$ gives the identity
$$
64t^2P_2M_1+16t^2P_2-Q_2^2-64tP_2=0,
$$
which determines $M_1$ in terms of the coefficients $P_j$ and $Q_j$ of
$P(y)$ and $Q(y)$.

\subsection{Some differences between the toy example and coloured
  maps}
\label{sec:differences}
Before embarking on the proof of Theorem~\ref{thm:ED}, we want to
underline a number of differences between the treatment just applied
to uncoloured maps and our
treatment of coloured maps below.
Let us list the main steps of our approach.
\begin{itemize}
\item We take $q=2+2\cos (2\pi/m)$, and  start from a
  polynomial equation in one catalytic variable
  satisfied by $M(y)$, obtained in~\cite{bernardi-mbm-alg}.
This is~\eqref{eq:inv}.
\item This equation has some similarities with~\eqref{carre}: in the
  latter equation, the right-hand side is a polynomial in $y$ with
  coefficients in $\qs[[t]]$, while in~\eqref{eq:inv},  the right-hand side is a
  polynomial in $M(y)$ (or rather, in a  series $I(y)$ which contains the same information
  as $M(y)$), with coefficients in $\qs(q,\nu,w)[[t]]$.
  Hence the roles of $y$ and $M(y)$ are exchanged.
\item We derive from~\eqref{eq:inv}
that a certain polynomial  $\Delta(x)$, of
  degree $2m$ and involving $m-2$ unknown series in $t$, has $m-2$ double roots
  $I_1, \ldots , I_{m-2}$. This is Lemma~\ref{lem:rootsCD}.
\item We derive our differential system  from this property. The
  counterparts of~\eqref{DyQ-bip} and~\eqref{DtR-bip} do not have
  left-hand sides $\Delta'_y$ and $\Delta'_t$, but variants of these
  two polynomials (Proposition~\ref{prop:factor}).
\end{itemize}

\section{Differential system for coloured planar maps}
\label{sec:general}

\subsection{An equation with one catalytic variable}
\label{sec:cat-M}
Our starting point is an equation  of~\cite{bernardi-mbm-alg}. We take $q=2+2\cos
(2k\pi/m)$, with $k$ and $m$ coprime and $0<2k<m$. We write $\beta=\nu-1$.
We introduce the following notation:
\begin{itemize}
\item $I(t,y)\equiv I(q,\nu,w,t;y)$ is a  variant of the \gf \
  $M(y)\equiv M(q,\nu,w,t;y)$ of $q$-coloured planar maps defined
  by~\eqref{M-def}:
\beq\label{I-def}
I(t,y)=tyqM(y)+\frac{y-1}{y}+\frac{ty}{y-1}.
\eeq
\item $N(y,x)$ and $D(t,x)$ are the following (Laurent) polynomials:
\beq\label{N-expr}
\bN(y,x)= \beta(4-q) (\by-1)+( q+2\,\beta ) x -q,
\eeq
with $\by=1/y$, and
\beq\label{D-expr}
\bD(t,x)=(q\nu+\be^2)x^2-q (\nu+1 ) x+ \be t ( q-4 ) ( wq+\be ) +q.
\eeq
\end{itemize}
Let $\Tch_m$ be the $m$th Chebyshev polynomial of the first kind, defined
by
$$
\Tch_m(\cos \theta) = \cos(m\theta).
$$
Then  there exists $m+1$ formal power series in $t$ with coefficients in
$\qs(q,\nu,w)$,
denoted $C_0(t), \ldots$, $C_m(t)$, such that
\beq\label{eq:inv}
\bD(t,I(t,y))^{m/2} \,\Tch_m\left(\frac{\bN(y,I(t,y))}{2\sqrt {\bD(t,I(t,y))}}
\right)= \sum_{r=0}^m
C_r(t) I(t,y)^r.
\eeq
This is a combination of Corollary~10 and Lemma~16 from~\cite{bernardi-mbm-alg}.
We call~\eqref{eq:inv} \emm the invariant equation,, since it is
derived from a certain \emm theorem of invariants,, in Tutte's
terminology~\cite{tutte-chromatic-revisited}.
We find convenient to denote
\beq\label{C-expr}
\bC(t,x)=\sum_{r=0}^m C_r(t)\, x^r.
\eeq
Since $\Tch_m(u)$ is even (resp. odd) in $u$ if $m$ is even
(resp. odd), the left-hand side of~\eqref{eq:inv} only involves \emm
integer, powers of $D$.

As the invariant equation may look
intimidating, let us consider an example.

\medskip
\noindent {\bf Example: bipartite maps.} Let us take $q=2$ (that is,
$m=4$ and $k=1$), $w=1$ and $\nu=0$. Then $M(y)$ is simply the \gf\ of
bipartite maps, counted by the edge number (variable $t$) and the
outer degree (variable $y$). Let
us show that in this case, the invariant equation~\eqref{eq:inv}
coincides with  the standard equation obtained by deleting the root-edge.

With our choice of $q$, $\nu$ and $w$, we have
$$
N(y,x)=-2\by, \qquad
D(t,x)=x^2-2x+2t+2,
$$
while $\Tch_m(u)=\Tch_4(u)=1-8u^2+8u^4$.  The invariant
equation thus reads
$$
\left(I(y)^{2}-2\,I(y) +2\,t+2\right) ^{2}-8\by^2
  (I(y)^{2}-2\,I(y)+2\,t+2)+8\,\by^4=
\sum _{r=0}^{4} C_r\, I(y)^r.
$$
In this equation, let us replace $I(y)$ by its
expression~\eqref{I-def} in terms of $M(y)$ (with $q=2$),
and then
expand the equation in the neighbourhood of $y=1$.
Saying that the coefficients of $(y-1)^{-4}, \ldots, (y-1)^0$ must vanish gives the values of  the  series $C_r$:
\beq\label{Cr-bip}
  C_4=1, \quad
C_3=-4,\quad
C_2=4t,\quad
C_1=8(1+t)
\eeq
and
$$
C_0=
-4-40\,t-4\,{t}^{2}+32\,{t}^{2}M (1).
$$
In the invariant equation, let us  replace  each series $C_r$ by its
expression: we obtain
$$
{y}^{2}t ( y^2-1 )  M ( y )^{2}
+ (1 -{y}^{2}+{y}^{2}t ) M ( y )
-t {y}^{2}M ( 1 )+  y^2-1 =0.
$$
Equivalently,
$$
M(y)= 1 + t y^2 M(y)^2+ty^2\, \frac{M(y)-M(1)}{y^2-1},
$$
which can be obtained by deleting the root-edge in a bipartite
map~\cite{tutte-general}. \qee

\bigskip
Let us return to the general case $q=2+2\cos(2k\pi/m)$.
By expanding~\eqref{eq:inv} near $y=1$, we can express the series
$C_r(t)$ in terms of the derivatives of $M$ with respect to $y$,
evaluated at $y=1$. We will need the expressions of $C_m, \ldots,
C_{m-3}$. Note that the first three are completely explicit in terms
of $q$, $\nu$, $t$, $w$ and $m$. The fourth one involves the
unknown series $M(1)$.
\begin{Lemma}
 \label{lem:initial-Cr}
Denote $\beta=\nu-1$, $\delta=\beta^2+q\nu$ and
$
\gamma= \frac{q+2\beta}{2\sqrt{\delta}}.
$
We have:
\begin{equation*}
\begin{split}
C_m&= \delta ^{m/2}\ \Tch_m(\gamma),\\
C_{m-1}&= -\frac q 4 \delta ^{(m-3)/2} \left(2 m(\nu+1)
 \sqrt{\delta}\ \Tch_m(\gamma)+\beta(q-4) \Tch'_m(\gamma)\right),\\
C_{m-2} &= \frac 1 8\, \delta ^{(m-5)/2}\left(
2 m a_0
 \sqrt{\delta}\ \Tch_m(\gamma)
 +q\beta(q-4)a_1\Tch'_m(\gamma)\right),\\
C_{m-3} &= \frac q{24} \delta^{(m-7)/2}
 \left(2 mb_0 \sqrt{\delta}\ \Tch_m(\gamma)+\beta(q-4)b_1 \Tch'_m(\gamma)\right)
+\frac{q(q-4)\beta  t^2}2 M(1)\, \delta^{(m-1)/2}\Tch'_m(\gamma),
\end{split}
\end{equation*}
with
\begin{equation*}
\begin{split}
 a_0&=2\,t\beta\,\delta \left( q-4 \right) \left( \beta+wq \right) +q
 \left( m-1 \right) \left( \left( 1+{\nu}^{2} \right) q-2\,{\beta}^{
2} \right) ,
\\
a_1&=2\,t\delta\, \left( \nu+1-w \left( 2\,\beta+q \right) \right) +q
 \left( m-1 \right) \left( \nu+1 \right) ,
\\
b_0&=6\,tw \left( q-4 \right) \beta\,\delta\, \left( \left( {\beta}^{2}-q
 \right) m+q \left( \nu+1 \right) \right) -6\,  {\beta}^{2}t\delta \left( m-1 \right)
 \left( \nu+1 \right) \left( q-4 \right)
\\ &\hskip 30mm -q \left(
m-1 \right) \left( m-2 \right) \left( \nu+1 \right) \left( \left(
{\nu}^{2}-\nu+1 \right) q-3\,{\beta}^{2} \right),
 \\
b_1&=6\,tw\delta\, \left( q( m-1 ) ( {\nu}^{2}+2\,\nu+q-3
) - \left( 2\,\beta+q \right) \delta \right)\\
&\hskip 20mm -6\,t \delta \left( m-1
 \right) \left( q ( 1+{\nu}^{2} ) -2\,{\beta}^{2}
 \right) +q ( m-1 ) ( m-2 ) \left( {\beta}^{2}
-( {\nu}^{2}+\nu+1 ) q \right).
\end{split}
\end{equation*}
\end{Lemma}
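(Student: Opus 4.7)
The plan is to extract the four coefficients $C_m, C_{m-1}, C_{m-2}, C_{m-3}$ from the invariant equation~\eqref{eq:inv} by Laurent expanding both sides around $y=1$, and matching the coefficients of $s^{-m}, s^{-m+1}, s^{-m+2}, s^{-m+3}$ where $s = y-1$.

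First, expand $I(t, 1+s)$ in $s$. Because the term $ty/(y-1)$ in the definition~\eqref{I-def} contributes $t/s + t$, the series $I$ has a simple pole at $s=0$ and
$$I = \frac{t}{s} + I_0 + I_1 s + I_2 s^2 + O(s^3),$$
with $I_0, I_1, I_2$ explicit polynomials in $t$, $q$, and in the values at $y=1$ of $M$ and of its first and second $y$-derivatives. Substituting into~\eqref{N-expr} and~\eqref{D-expr} gives Laurent expansions
$$N = \frac{(q+2\be)t}{s} + \sum_{k\ge 0} N_k s^k, \qquad D = \frac{\delta t^2}{s^2} + \sum_{k\ge -1} D_k s^k,$$
where $\delta = \be^2+q\nu$ and the $N_k, D_k$ are explicit in $I_0, I_1, I_2$ and the parameters.

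Next, compute $\sqrt{D} = \sqrt{\delta}\, t/s + O(1)$ by binomial expansion of the square root, and form $u = N/(2\sqrt{D})$, whose value at $s=0$ is exactly $\gamma = (q+2\be)/(2\sqrt{\delta})$. Taylor-expanding $\Tch_m(u)$ around $u=\gamma$ to order $(u-\gamma)^3$, and expanding $D^{m/2}$ out of its leading factor $(\sqrt{\delta}\,t/s)^m$ by the binomial theorem, one obtains the Laurent expansion of the left-hand side of~\eqref{eq:inv} to order $s^{-m+3}$. All four coefficients involve only $\Tch_m(\gamma), \Tch_m'(\gamma), \Tch_m''(\gamma), \Tch_m'''(\gamma)$, together with the $N_k, D_k$.

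On the right-hand side, the coefficient of $s^{-m+k}$ in $\sum_r C_r(t) I^r$ is a $\qs[I_0, \ldots, I_k]$-linear combination of $C_m, \ldots, C_{m-k}$ in which $C_{m-k}$ enters with coefficient $t^{m-k}$. Matching the coefficients of $s^{-m}, s^{-m+1}, s^{-m+2}, s^{-m+3}$ therefore yields a triangular linear system that is solved successively for $C_m, C_{m-1}, C_{m-2}, C_{m-3}$. The main obstacle is purely computational; moreover, a non-trivial consistency check is that the first two $y$-derivatives of $M$ at $y=1$, which genuinely appear in $I_1$ and $I_2$, must cancel from the final expressions for $C_m, C_{m-1}, C_{m-2}$ (since the formulas given in the lemma are fully explicit) and leave only $M(1)$ in $C_{m-3}$. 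This cancellation is forced by the fact that each $C_r(t)$ depends on $t$ alone and not on $y$, and is verified in the accompanying Maple session.
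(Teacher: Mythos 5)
Your proposal follows essentially the same route as the paper: multiply the invariant equation by $(y-1)^m$, expand around $y=1$, and read off $C_m,\ldots,C_{m-3}$ from the coefficients of $(y-1)^0,\ldots,(y-1)^3$, noting that the $y$-derivatives of $M$ at $y=1$ must ultimately drop out. The one step you omit, which the paper uses and which is needed to reach the formulas as stated, is the reduction of $\Tch_m''(\gamma)$ and $\Tch_m'''(\gamma)$ (which arise from Taylor-expanding $\Tch_m$ to order three) back to $\Tch_m(\gamma)$ and $\Tch_m'(\gamma)$ via the Chebyshev ODE $(1-u^2)\Tch_m''(u)-u\Tch_m'(u)+m^2\Tch_m(u)=0$. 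Also, your remark that the cancellation of $M'(1),M''(1)$ is ``forced'' by the $y$-independence of the $C_r$ is not quite a proof of that cancellation — the $C_r$ obtained by solving your triangular system are automatically $t$-series whatever survives — but the paper likewise treats this as a computation rather than a structural argument, so this does not change the substance of the approach.
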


\smallskip
\noindent{\bf Example: bipartite maps (continued).} When $k=1$,
$m=4$, $q=2$, $w=1$ and $\nu=0$, we have $\beta=-1$, $\delta=1$ and
$\gamma=0$. Moreover, $\Tch_m(\gamma)=1$ and $\Tch'_m(\gamma)=0$. We
can then check that the above lemma specializes to~\eqref{Cr-bip}. In
particular, $C_1=C_{m-3}$ does not involve $M(1)$ because $\Tch_m'(\gamma)=0$. \qee

\begin{proof}[Proof of Lemma~\ref{lem:initial-Cr}.]
 We multiply the invariant equation~\eqref{eq:inv} by $(y-1)^m$, so that it
 becomes non-singular at $y=1$, and expand it around $y=1$. We
 extract successively the coefficients of $(y-1)^0, \ldots, (y-1)^3$
 and obtain in this way expressions of $C_m, \ldots, C_{m-3}$. For
 instance, since
 \begin{align*}
  (y-1) I(t,y)&= t+O(y-1),\\
(y-1)N(y, I(t,y))&= (q+2\beta) t +O(y-1),\\
(y-1)^2 D(t,I(t,y))&= \delta t^2+O(y-1),
 \end{align*}
extracting the coefficient of $(y-1)^0$ gives
$$
\delta^{m/2} t^m \Tch_m(\gamma)= C_m t^m,
$$
from which the expression of $C_m$ follows. As we extract the
coefficients of higher powers of $(y-1)$, derivatives of $\Tch_m$, taken
at the point $\gamma$,
occur. We systematically express them in terms of $\Tch_m(\gamma)$ and
$\Tch_m'(\gamma)$ using the differential equation
$$
(1-u^2)\Tch''_m(u)-u\Tch'_m(u)+m^2\Tch_m(u)=0.
$$
Similarly, derivatives of $(1-y)I(t,y)$ with respect to $y$ occur, and
this is why the expression of $C_{m-3}$ involves the series $M(1)$.
\end{proof}

\subsection{Some special values of $\boldsymbol{y}$}

The invariant equation~\eqref{eq:inv} reads
\beq\label{eqinv-Rat}
\Rat(t,y,I(y),C_0, \ldots,C_m)=0
\eeq
where $\Rat$ is the following rational function, with coefficients in
$\qs(q,\nu, w)$:
$$
\Rat(t,y,x,c_0, \ldots, c_r)=
D(t,x)^{m/2}\Tch_m\left(\frac{N(y,x)}{2\sqrt{D(t,x)}}\right)-\sum_{r=0}^m c_r x^r.
$$
Observe that the first term of $\Rat$
is the only one that involves $y$. In our
toy model (Section~\ref{sec:toy}), we were dealing with~\eqref{carre}, which read  $
\Rat(t,y,M(y),M_1)=0$ with
$$
\Rat(t,y,x,c)=
\left( 2ty^2(y-1)x+ty^2-y+1\right)^{ 2}-
(y-1-y^2t)^2+4ty^2(y-1)^2-4t^2y^3(y-1)c.
$$
There, the first term was the only one that depended on $x$. To solve
this toy equation, we considered a series $Y\equiv Y(t)$ cancelling this
term, or equivalently, satisfying
$$
\Rat'_x(t,Y,M(Y),M_1)=0,
$$
where the derivative is taken with respect to the third variable of $\Rat$.
Similarly, we are going to solve~\eqref{eqinv-Rat} by cancelling
$\Rat'_y$ (this change from $x$ to $y$ explains why we wrote in
Section~\ref{sec:differences} that the roles of $y$ and $M(y)$ are exchanged). More precisely, we focus on
$$
\Tch'_m\left( \frac{N(y,I(t,y))}{2\sqrt {D(t,I(t,y))}}\right)=0.
$$
Denoting $n=\lfloor m/2\rfloor$, the set of roots of $\Tch_m'$ is easily
seen to be:
\beq\label{roots}
\left\{ \cos(j\pi/m), 0<j<m\right\} =
\left\{\pm\cos(j\pi/m), j=1,\ldots, n\right\}.
\eeq
Hence we are interested in the equation
\begin{equation}\label{eqj-square}
\bN(Y,I(t,Y))^2 =4\cos(j\pi/m)^2\bD(t,I(t,Y)),
\end{equation}
for some $j \in \llbracket 1, n\rrbracket$.
 \begin{Lemma}\label{lem:count}
 Let us assume
$k=1$, so that $q=2+2\cos (2\pi/m)$.
There exist  $m-2$ distinct formal power series in $t$, denoted $Y_1,
 \ldots, Y_{m-2}$, that have coefficients in $\rs(\nu,w)$,
 constant term $1$, and satisfy~\eqref{eqj-square}
for some $j \in \llbracket 1, n\rrbracket$.

Let us denote $I_i(t):=I(t,Y_i)$.  This is a \fps\ in $t$ with
coefficients in $\rs(\nu,w)$. The $m-2$ series
$I_i(t)$ are distinct, and for $1\le i \le m-2$,
$$
I_i(0)\not\in\{0, 1\}, \qquad \bD(t,I_i)\not = 0,
\qquad \frac{\partial I}{\partial y}(t,Y_i) \not = 0.
$$
 \end{Lemma}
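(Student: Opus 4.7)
Since $I(t,y)$ is singular at $y=1$ for $t\neq 0$, we cannot directly apply the implicit function theorem at $(t,y)=(0,1)$. Instead we reparametrize by $y = 1 + t\tilde y$. A direct expansion shows that whenever $\tilde y\in\rs(\nu,w)[[t]]$ has nonzero constant term,
\[
I(t, 1+t\tilde y) = \frac{1}{\tilde y} + O(t) \;\in\; \rs(\nu,w)[\tilde y,\tilde y^{-1}][[t]];
\]
in particular a solution $Y_i = 1 + c_1 t + O(t^2)$ of~\eqref{eqj-square} with $c_1\neq 0$ produces $I_i(0) = 1/c_1$. Using $N(1, 1/\tilde y) = [(q+2\beta) - q\tilde y]/\tilde y$ and $D(0, 1/\tilde y) = [q\tilde y^2 - q(\nu+1)\tilde y + (q\nu+\beta^2)]/\tilde y^2$, setting $t = 0$ in~\eqref{eqj-square} and clearing denominators yields the quadratic in $\tilde y$
\begin{equation}\label{eq:quad-sketch}
\bigl[(q+2\beta)-q\tilde y\bigr]^2 \;=\; \alpha_j^2\bigl[q\tilde y^2 - q(\nu+1)\tilde y + (q\nu+\beta^2)\bigr],
\qquad \alpha_j := 2\cos(j\pi/m).
\end{equation}
The plan is to enumerate the distinct roots of this family of quadratics (one per $j\in\{1,\ldots,n\}$) and lift each one to a unique formal power series solution of~\eqref{eqj-square} via the formal implicit function theorem.

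\textbf{Root count.}
Since $q = 4\cos^2(\pi/m) = \alpha_1^2$, the leading coefficient $q(q - \alpha_j^2)$ of~\eqref{eq:quad-sketch} vanishes exactly when $j = 1$: the equation degenerates to the linear relation $\beta(q-4)[q\tilde y - (q+\beta)] = 0$ with unique root $\tilde y = (q+\beta)/q$ (the generic second root having escaped to infinity). When $m$ is even and $j = m/2$, $\alpha_j = 0$ and \eqref{eq:quad-sketch} becomes $[(q+2\beta) - q\tilde y]^2 = 0$, a double root at $\tilde y = (q+2\beta)/q$. For the remaining $j \in \{2,\ldots,n\}\setminus\{m/2\}$, a direct computation gives the discriminant of~\eqref{eq:quad-sketch} as $16\, q\, \beta^2\, \alpha_j^2\,\sin^2(\pi/m)\sin^2(j\pi/m)$, a nonzero perfect square in $\rs(\nu)$, so the quadratic has two distinct roots in $\rs(\nu)$. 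Summing gives $m-2$ roots in both parities: $1 + 2(n-1) = m-2$ when $m$ is odd, and $1 + 2(n-2) + 1 = m-2$ when $m$ is even.

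\textbf{Lifting and distinctness.}
For each simple root $c_1$ of~\eqref{eq:quad-sketch}, the formal implicit function theorem produces a unique $\tilde Y_i = c_1 + O(t) \in \rs(\nu,w)[[t]]$ satisfying~\eqref{eqj-square}. For the double root at $j = m/2$ ($m$ even), one observes that~\eqref{eqj-square} reads $N(Y, I(t,Y))^2 = 0$, equivalent to the simpler equation $N = 0$; the latter has $\tilde y = (q+2\beta)/q$ as a \emph{simple} root, so the implicit function theorem again lifts it uniquely. Setting $Y_i = 1 + t\tilde Y_i$ yields the required $m-2$ series with coefficients in $\rs(\nu,w)$ and constant term $1$. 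Distinctness across different~$j$ follows from the observation that each root $c_1$ pins down the value $N(1, 1/c_1)/(2\sqrt{D(0, 1/c_1)}) = \pm\cos(j\pi/m)$, so the map from the $c_1$'s to the admissible signed pairs $(j,\pm)$ is injective; after removing the one pair ``at infinity'' for $j = 1$ and the coincident $\pm 0$ at $j = m/2$, the remaining $c_1$ values are pairwise distinct. Since $I_i(0) = 1/c_1$, the series $I_i$ are distinct as well.

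\textbf{Side conditions and main obstacle.}
Plugging $c_1 = 0$ into~\eqref{eq:quad-sketch} and matching the $\nu^2$-coefficient gives $\alpha_j^2 = 4$, while $c_1 = 1$ forces $\beta^2(4-\alpha_j^2) = 0$; both are excluded under our hypotheses ($\beta \neq 0$, $0 < 2 < m$), so $I_i(0) = 1/c_1 \notin \{0,1\}$. If $D(0, 1/c_1) = 0$, then~\eqref{eq:quad-sketch} forces $[(q+2\beta) - qc_1]^2 = 0$, hence $c_1 = (q+2\beta)/q$; but $D(0, q/(q+2\beta)) = q\beta^2(4-q)/(q+2\beta)^2 \neq 0$, a contradiction, so $D(t, I_i)$ has nonzero constant term and is therefore nonzero as a power series. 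For the last condition, the term $-t/(y-1)^2$ in $\partial I/\partial y$ dominates along $y = Y_i = 1 + c_1 t + O(t^2)$, yielding $\partial I/\partial y(t, Y_i) = -1/(c_1^2 t) + O(1)$, a nonzero Laurent series. The main obstacle is the delicate, parity-dependent bookkeeping of the two degenerate cases ($j = 1$, where the leading coefficient drops, and $j = m/2$ for even $m$, where the discriminant vanishes): one would naively expect $2n$ roots, but a loss of exactly one in each degenerate case accounts for the reduction to $m - 2$. Justifying both lifts and checking that the degenerate cases still produce genuine power series (rather than Puiseux series) is the technical heart of the argument.
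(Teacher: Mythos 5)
Your proposal is correct and follows essentially the same approach as the paper: reparametrize $y=1+t\tilde y$, reduce the invariant relation~\eqref{eqj-square} at $t=0$ to the quadratic $P_j(\tilde y)=0$, count roots, and lift via the formal implicit function theorem. The paper parametrizes the roots as $z_0=1+\beta v$ and writes them explicitly as $v_j^{\pm}$ in terms of sines and cosines, checks they are distinct by inspection of the ordering $0<v_1^+<\cdots<v_n^+\le v_n^-<\cdots<v_2^-$, and treats only odd $m$ in detail with a remark that the even case is ``similar''; you avoid the explicit root formulas by using the discriminant $16q\beta^2\alpha_j^2\sin^2(\pi/m)\sin^2(j\pi/m)$ (a nonzero perfect square in $\rs(\nu)$), and you supply the details the paper elides for even $m$, notably the reduction of the degenerate $j=m/2$ case to the linear equation $N=0$ so that the implicit function theorem applies at a simple root. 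Both routes give the same count $m-2$ and the same verification of the side conditions (your proof-by-contradiction for $\bD(t,I_i)\neq 0$ via $c_1=(q+2\beta)/q$ and $D(0,q/(q+2\beta))=q\beta^2(4-q)/(q+2\beta)^2$ is a clean variant of the paper's direct computation of the constant term).
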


\noindent{\bf Example: one-coloured maps.} We take $q=1$, that is, $m=3$ (and
$n=1$). The only relevant value of $j$ is $1$, and we thus want to solve
$$
N(Y,I(t,Y))^2=D(t, I(t,Y)),
$$
for $Y$ a series in $t$ with constant term $1$. We write $Y=1+tZ$.
Using the definitions~\eqref{N-expr} and~\eqref{D-expr} of $N$ and $D$, the
equation satisfied by the series $Z$ can be written as
$$
Z=\nu+t\Pol\left(\nu,w,t,Z,M(1+tZ)\right),
$$
for some polynomial $\Pol$ with integer coefficients. This equation
allows us to compute by induction on $n$ the coefficient of $t^n$ in $Z$,
and shows the existence and uniqueness of $Z$ (and $Y$). More precisely,
$$
Y=1+\nu t+2\nu^2(w+1)t^2+\nu^3(5+14w+6w^2)t^3+O(t^4).
$$
This gives
\begin{align*}
  I_1(t):= I(t,Y)&=  1 /\nu +(\nu-w-1) t +O(t^2),
\\
D(t,I_1)&=\left(1-1/\nu\right)^2+O(t),
\\
\frac{\partial I}{\partial y}(t,Y)&=-\frac 1{\nu^2 t} +O(1),
\end{align*}
so that the last three properties of the lemma hold. \qee

\medskip
\begin{proof}[Proof of Lemma~\ref{lem:count}]
 We denote
$q_j=4\cos(j\pi/m)^2=2+2\cos(2j\pi/m)$. Observe that $q
=q_1$.
 We assume that $m=2n+1$ is odd. The proof is similar in the
even case. 

\medskip
\paragraph{\bf Existence of the series $\boldsymbol{Y_i}$.}
We are looking for series $Y\equiv Y(t)$ solutions of~\eqref{eqj-square},
of the form $Y=1+tZ$ for some series $Z\equiv Z(t)$.
Multiplying~\eqref{eqj-square} by $Z^2$ gives the equation
$
\Phi_j(t,Z)=0,
$
where
$$
\Phi_j(t,z)=z^2\bN(1+tz,I(t,1+tz))^2-q_jz^2\bD(t,I(t,1+tz)).
$$
We are interested in non-zero solutions (since for $Z=0$, we have
$Y=1$ and $I(t,Y)$ is not well defined).
Observe that $zI(t,1+zt)$ is a \fps\ in $t$ with coefficients in
$\rs[\nu,w,z]$. Hence the same holds for $z\bN(1+tz,I(t,1+tz))$ and
$z^2\bD(t,I(t,1+tz))$, and finally for $\Phi_j(t,z)$.  Recall that
$\nu=\beta+1$. Expanding $I$, $N$ and $D$  at  first order in $t$ gives
\begin{align}
zI(t,1+tz) &= 1+O(t),\label{I-exp}
\\
z\bN(1+tz,I(t,1+tz))&= q+2\beta -qz+O(t),\nonumber
\\
z^2\bD(t,I(t,1+tz))&=q\beta+q+\beta^2-q (\beta+2)z +qz^2 +O(t).
\label{D-exp}
\end{align}
 The equation $\Phi_j(t,Z)=0$ thus reads
\beq\label{eq-Phi-S}
P_j(Z) +t S_j(t,Z)=0,
\eeq
where
$$
P_j( z)=\left( q+2\beta -qz\right)^2-q_j\left(q\beta+q+\beta^2-q
  (\beta+2)z +qz^2\right)
$$
and $S_j(t,z)$ is a power series in $t$ with coefficients in $\rs[\nu, w,
z]$.
By~\eqref{eq-Phi-S}, the coefficient of $t^0$ in~$Z$, denoted $z_0$,
must satisfy $P_j(z_0)=0$.
Equivalently, $z_0=1+\beta v$ with
\begin{equation}\label{eq-quadratic}
q(q-q_j)v^2+q(q_j-4)v+4-q_j =0.
\end{equation}
This equation has degree  1 in $v$ if $q_j=q$, that is if $j=1$, and
degree 2 otherwise.
 Given the expressions of $q_j$ and $q=q_1$,
its roots  are found to be
%
%
$$v_j^{+}=\frac{\sin(j\pi/m)}{2\cos(\pi/m)\sin((j+1)\pi/m)}
\quad \hbox{ and } \quad
v_j^{-}=\frac{\sin(j\pi/m)}{2\cos(\pi/m)\sin((j-1)\pi/m)},
$$
for $j\not = 1$. For  $j=1$ the only root is $v_1^+=1/q$.
The roots $v_j^+$ and $v_j^-$ are distinct
(we use here the assumption that $m$ is odd).

Now having fixed $z_0=1+\beta v$ with $v=v_j^{\pm}$, let us return
to~\eqref{eq-Phi-S}
and extract from it the coefficient of $t^p$, for $p\ge 1$. This gives
$$
P'_j(z_0)[t^p]Z+ \big(\hbox{expression involving only } [t^i]Z \hbox{ for }
i<p\big) =0.
$$
Since $v$ is not
a double root of $P_j$, this allows us to compute
the coefficient of $t^p$ in $Z$ by induction on $p$, and thus
determines $Z$ completely. Moreover, $[t^p]Z$ has a rational
expression in $\nu$ and $w$ (with real coefficients).

It is easy to see that
$$
0 < v_1^+ < \cdots <v_n^+ =\frac 1{2\cos(\pi/m)}<v_{n}^-<\cdots< v_{2}^-,
$$
so that we have exactly  $(m-2)$ distinct values $v_j^\pm$.
Let us denote them
$
v_1, \ldots, v_{m-2}.
$
They give rise to exactly $(m-2)$ distinct series $Y$ satisfying $Y(0)=1$
and~\eqref{eqj-square} for some $j\in \llbracket 1, n\rrbracket$. We
denote them $Y_1,
\ldots, Y_{m-2}$, with $Y_i=1+(1+\beta v_i)t+O(t^2)$.
We note that $v_i$  is a real number, while
$\beta=\nu-1$ where $\nu $ is a formal parameter. Hence our $m-2$
series $Y_i$ all differ from the constant $1$, and $I(t,Y_i)$ is
well-defined.
We have thus proved the first statement of the lemma.

\medskip

\medskip
\paragraph{\bf Properties of $\boldsymbol{Y_i}$ and
  $\boldsymbol{I_i:=I(t,Y_i)}$.}
Let us  prove that the series $I_i:=I(t,Y_i)$ are
distinct. Returning to~\eqref{I-exp} gives $I_i=(1+\beta
v_i)^{-1}+O(t)$. Hence the $(m-2)$ constant terms $I_i(0)$ are distinct since  the $(m-2)$ numbers $v_1, \ldots, v_{m-2}$ are distinct.
Moreover  $I_i(0)$ is obviously non-zero, and it is
distinct from 1 because $v_i\neq 0$.

Let us now prove that
$D(t,I_i)\neq 0$. Returning to~\eqref{D-exp}, we find
\beq\label{D-constant-term}
(1+\beta v_i)^2 D(t,I_i)=\beta^2(qv_i^2-qv_i+1)+O(t)=\left( \beta \frac{\sin
    (\pi/m)}{\sin ((j\pm 1)\pi/m)}\right)^2+O(t),
\eeq
where the value of the denominator depends on whether $v_i$ is of the
form $v_j^+$ or $v_j^-$. At any rate, this is non-zero.

We finally check that
$$
\displaystyle \frac{\partial I}{\partial y}(t,Y_i)=
-\frac{1}{t(1+\be v_i)^2}+O(1)$$
is non-zero, and this completes the proof of the lemma.
\end{proof}

\subsection{Some polynomials with common roots}
We still assume $q=2+2\cos (2\pi/m)$.
Recall that  $D(t,x)$
and $C(t,x)$ are polynomials in $x$ with coefficients in $\qs(q, \nu,
w)[[t]]$,  given by~\eqref{D-expr} and~\eqref{C-expr}
respectively. Their degrees in $x$ are $2$ and $m$, respectively.  The
coefficients of $D$ are explicit, but those of $C$ are unknown, apart from the three leading ones (Lemma~\ref{lem:initial-Cr}). We
now prove  that several polynomials, related
to $C(t,x)$ and $D(t,x)$, admit the series $I_i$ of Lemma~\ref{lem:count} as
common roots.

\begin{Lemma}\label{lem:rootsCD}
Each of the series $I_i:=I(t,Y_i)$ defined by Lemma~{\rm\ref{lem:count}} satisfies
\begin{align}
 \bC(t,I_i)^2&=\bD(t,I_i)^m,\label{eq1}
\\
\bD(t,I_i) \frac{\partial \bC}{\partial x}(t,I_i)&= \frac m 2 \bC(t,I_i)
\frac{\partial \bD}{\partial x}(t,I_i),\label{eq2}
\\
\bD(t,I_i) \frac{\partial \bC}{\partial t}(t,I_i)&= \frac m 2 \bC(t,I_i) \frac{\partial \bD}{\partial t}(t,I_i).\label{eq3}
  \end{align}
Since $D(t,I_i)\not = 0$, it follows from~\eqref{eq1} and~\eqref{eq2}
that $I_i$ is actually a double root of $C^2-D^m$.
\end{Lemma}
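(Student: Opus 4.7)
My plan is to use the invariant equation~\eqref{eq:inv} together with the defining property of the $Y_i$. Writing $u(y):=\bN(y,I(t,y))/(2\sqrt{\bD(t,I(t,y))})$, the invariant equation reads $\bD^{m/2}\Tch_m(u)=\bC$, where here and below $\bC$, $\bD$ and their derivatives are silently evaluated at $(t,I(t,y))$. By the construction of $Y_i$ in Lemma~\ref{lem:count}, $u(Y_i)=\pm\cos(j\pi/m)$ for some $j\in\llbracket 1,n\rrbracket$, and so $\Tch_m(u(Y_i))=(-1)^j$ and, crucially, $\Tch'_m(u(Y_i))=0$, because the points listed in~\eqref{roots} are precisely the roots of $\Tch'_m$. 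Two further facts from Lemma~\ref{lem:count} will be used: $\bD(t,I_i)\neq 0$ and $\partial I/\partial y(t,Y_i)\neq 0$. Finally, as noted just after~\eqref{eq:inv}, $\bD^{m/2}\Tch_m(u)$ is actually a polynomial in $\bN$ and $\bD$, so all formal manipulations with $\sqrt{\bD}$ below are legitimate.

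For~\eqref{eq1} I would simply square the invariant equation: $\bD^m\Tch_m(u)^2=\bC^2$ is a polynomial identity, and specializing at $y=Y_i$ gives $\bD(t,I_i)^m=\bC(t,I_i)^2$, since $\Tch_m(u(Y_i))^2=1$. For~\eqref{eq2}, I would differentiate the invariant equation in $y$; the chain rule (and the fact that $\bD(t,I(t,y))$ depends on $y$ only through $I$) yields
\[
\frac{m}{2}\,\bD^{m/2-1}\,\bD'_x\,I'_y\,\Tch_m(u)+\bD^{m/2}\,\Tch'_m(u)\,u'_y=\bC'_x\,I'_y.
\]
At $y=Y_i$ the second term vanishes because $\Tch'_m(u(Y_i))=0$, and using $\bD^{m/2}\Tch_m(u)=\bC$ the remainder becomes $(m\bC/(2\bD))\,\bD'_x\,I'_y=\bC'_x\,I'_y$, evaluated at $(t,I_i)$ and $(t,Y_i)$ as appropriate. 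Cancelling $I'_y(t,Y_i)\neq 0$ and clearing the factor $\bD(t,I_i)\neq 0$ yields~\eqref{eq2}.

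For~\eqref{eq3} I would repeat the same manoeuvre with $\partial/\partial t$, tracking both direct and chain-rule contributions. Using $\Tch'_m(u(Y_i))=0$ once more, the result at $y=Y_i$ reads
\[
\frac{m}{2}\,\frac{\bC}{\bD}\,\bigl(\bD'_t+\bD'_x\,I'_t\bigr)=\bC'_t+\bC'_x\,I'_t,
\]
where $I'_t$ stands for $\partial I/\partial t$ at $(t,Y_i)$. But~\eqref{eq2} precisely states that the terms proportional to $I'_t$ on the two sides cancel, so the identity reduces to $(m/2)\,\bC\,\bD'_t=\bD\,\bC'_t$, which is~\eqref{eq3}. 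I do not anticipate a serious obstacle: the whole proof is a clean chain-rule computation, and the two nontrivial inputs, $\Tch'_m(u(Y_i))=0$ and $I'_y(t,Y_i)\neq 0$, have already been arranged by Lemma~\ref{lem:count}. The only point requiring care is the handling of the formal symbol $\sqrt{\bD}$, which is dealt with by treating $\bD^{m/2}\Tch_m(u)$ throughout as a polynomial in $\bN$ and $\bD$.
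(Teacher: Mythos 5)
Your proposal is correct and follows essentially the same route as the paper's proof: square the invariant equation for \eqref{eq1}, then differentiate in $y$ and $t$, kill the $\Tch_m'$ term at $y=Y_i$, cancel $I'_y\neq 0$, and use the established relation to remove the $I'_t$ contribution. The only slip is cosmetic: $\Tch_m(u(Y_i))=(-1)^j$ is only right when $u(Y_i)=+\cos(j\pi/m)$; for the sign $-\cos(j\pi/m)$ one gets $(-1)^{m+j}$, but since all you use is $\Tch_m(u(Y_i))^2=1$ (and the invariant equation itself carries the sign in the later steps), this does not affect the argument.
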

\begin{proof}
Let us denote
$$
G(t,y,x)=\frac{N(y,x)}{2\sqrt{D(t,x)}}.
$$
Recall  that $D(t,I_i)$ is a \fps\ in $t$ with coefficients in
$\rs(\nu,w)$. By~\eqref{D-constant-term}, its constant term is of the
form $\beta^2 r^2/(1+\beta v)^2$, for some  real
numbers $r\not = 0$ and $v$. So we can
 define $\sqrt{D(t,I_i)}$
as a \fps\ in $t$ with coefficients in $\rs(\nu,w)$.

By construction of the series $Y_i$, we have
$G(t,y,I_i)=\pm \cos(j\pi/m)$, for some $j\in \llbracket 1, n\rrbracket$.  Given that $\Tch_m(\cos x)= \cos(mx)$,
each series $Y_i$ satisfies
$$
\Tch_m\big(G(t,Y_i,I(t,Y_i))\big)= \vareps \quad \hbox{and} \quad
\Tch'_m\big(G(t,Y_i,I(t,Y_i))\big)= 0,
$$
where $\vareps=\pm1$.
Hence the invariant equation~\eqref{eq:inv} gives
\beq\label{eq:1}
\bC(t,I_i)=\vareps \bD(t,I_i)^{m/2}
\eeq
from which~\eqref{eq1} follows.

 Let us now differentiate the invariant equation~\eqref{eq:inv} with respect to $y$:
\begin{multline*}
 \frac m 2 \bD(t,I(t,y))^{m/2-1} \frac{\partial \bD}{\partial x}(t,I(t,y)) \frac {\partial I}{ \partial y}(t,y)
\,\Tch_m\big(G(t,y,I(t,y))\big)
\\
+\bD(t,I(t,y))^{m/2}\left( \frac{\partial G}{\partial y}(t,y,I(t,y))+ \frac{\partial G}{\partial x}(t,y,I(t,y)) \frac {\partial I}{ \partial y}(t,y)\right)
\,\Tch_m'\big(G(t,y,I(t,y))\big)
\\= \frac{\partial \bC}{\partial x} (t,I(t,y))\frac {\partial I}{ \partial y}(t,y),
\end{multline*}
or, if we want to keep things in a reasonable volume,
$$
\frac m 2 \bD^{m/2-1} \bD'_x I'_y \,\Tch_m(G)+ \bD^{m/2}\left( G'_y+ G'_x
 I'_y\right) \Tch'_m(G)= \bC'_x I'_y.
$$
When $y=Y_i$, then $\Tch_m(G)=\vareps$, $\Tch'_m(G)=0$ and $I'_y \not = 0$
(by Lemma~\ref{lem:count}), so that
\beq\label{eq:2.0}
 \frac m 2 \vareps\bD^{m/2-1} \bD'_x = \bC'_x.
\eeq
Combined with~\eqref{eq:1}, this gives~\eqref{eq2}.

Similarly, differentiating the invariant equation with respect to $t$ gives
$$
\frac m 2 \bD^{m/2-1} \left( \bD'_t+\bD'_x I'_t \right)
\Tch_m(G)+ \bD^{m/2}\left( G'_t+ G'_x I'_t\right) \Tch'_m(G)= \bC'_t + \bC'_x I'_t.
$$
When $y=Y_i$, then $\Tch_m(G)=\vareps$, $\Tch'_m(G)=0$, and $\bC'_x$ is given by~\eqref{eq:2.0}. Hence
$$
\frac m 2 \vareps \bD^{m/2-1} \bD'_t= \bC'_t .
$$
Combined with~\eqref{eq:1}, this gives~\eqref{eq3}.
\end{proof}

\begin{Proposition}\label{prop:factor}
 Let $I_i:=I(t,Y_i)$ be the $(m-2)$
series defined in  Lemma~{\rm\ref{lem:count}}.
 There exists a  triple $(\hP(t,x), \hQ(t,x),\hR(t,x))$ of
 polynomials in $x$ with coefficients in $\rs(\nu,w)[[t]]$, having
 degree at most $4$, $2$ and $2$ respectively in $x$, such that
 \begin{align}
 \bC(t,x)^2-\bD(t,x)^m&= \displaystyle \hP(t,x)
 \prod_{i=1}^{m-2}(x-I_i)^2,
\label{factor1}
\\
\bD(t,x)\bC'_x(t,x)- \frac m 2 \bD'_x(t,x)\bC(t,x)&= \displaystyle \hQ(t,x) \prod_{i=1}^{m-2}(x-I_i),\label{factor2}
\\
\bD(t,x)\bC'_t(t,x)- \frac m 2 \bD'_t(t,x)\bC(t,x)&= \displaystyle \hR(t,x) \prod_{i=1}^{m-2}(x-I_i).\label{factor3}
 \end{align}
\end{Proposition}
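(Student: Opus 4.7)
The plan is to define $\hP$, $\hQ$, $\hR$ as the polynomial quotients on the right-hand sides of~\eqref{factor1}--\eqref{factor3}, and to verify three things: (a) these divisions are exact, (b) the quotients have the claimed degrees in $x$, and (c) their coefficients lie in $\rs(\nu,w)[[t]]$.

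For (a), I would combine Lemma~\ref{lem:rootsCD}, which asserts that each $I_i$ is a simple root of the two polynomials appearing in~\eqref{factor2} and~\eqref{factor3}, and a double root of $C^2-D^m$, with Lemma~\ref{lem:count}, which states that the $I_i$ are distinct. Since the factors $(x-I_i)$ are pairwise coprime in $\rs(\nu,w)[[t]][x]$, so are the $(x-I_i)^2$; divisibility at each root with the appropriate multiplicity thus upgrades to divisibility by the full product.

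For (b), a direct degree count gives $\deg_x(C^2-D^m)\le 2m$, so $\deg_x\hP\le 2m-2(m-2)=4$. The two other bounds are more delicate: a naive count gives $\deg_x(DC'_x-\tfrac{m}{2}D'_x C)\le m+1$ and $\deg_x(DC'_t-\tfrac{m}{2}D'_t C)\le m+2$, both exceeding what we need. For the $\hQ$-identity, I would extract the coefficient of $x^{m+1}$ and check that it vanishes: the two contributions are $[x^2]D\cdot mC_m=m(q\nu+\beta^2)C_m$ and $\tfrac{m}{2}[x^1]D'_x\cdot C_m=m(q\nu+\beta^2)C_m$, which cancel. For the $\hR$-identity, I would invoke Lemma~\ref{lem:initial-Cr}, which shows that $C_m$ and $C_{m-1}$ are independent of $t$; hence $C'_t$ has degree at most $m-2$ in $x$, and since $D'_t$ is constant in $x$, the left-hand side of~\eqref{factor3} has degree at most $m$, giving $\deg_x\hR\le 2$.

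Finally, for (c), the divisor $\prod_i(x-I_i)$ is monic in $x$ with coefficients in $\rs(\nu,w)[[t]]$ (each $I_i$ being such a series, by Lemma~\ref{lem:count}). Since Euclidean division in $\rs(\nu,w)[[t]][x]$ by a monic polynomial produces a quotient and a remainder in the same ring, and the remainder vanishes by~(a), the quotients $\hP$, $\hQ$, $\hR$ indeed belong to $\rs(\nu,w)[[t]][x]$. The main subtle step is the degree drop in~(b): without the explicit top coefficients of $C$ provided by Lemma~\ref{lem:initial-Cr}, one would only obtain $\hQ$ of degree $\le 3$ and $\hR$ of degree $\le 4$, which would be too weak for the differential system of Theorem~\ref{thm:ED}.
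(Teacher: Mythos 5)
Your proof is correct and follows the paper's own route: Lemma~\ref{lem:rootsCD} supplies the divisibility, the leading-coefficient cancellation between $DC'_x$ and $\tfrac m2 D'_xC$ gives the degree drop for $\hQ$, the $t$-independence of $C_m$ and $C_{m-1}$ (from Lemma~\ref{lem:initial-Cr}) gives it for $\hR$, and an integrality argument places the quotients in $\rs(\nu,w)[[t]][x]$ (you invoke monic Euclidean division, while the paper expands $\prod_i I_i^{-2}(1-xI_i^{-1})^{-2}$ as a geometric series using $I_i(0)\ne 0$ --- either works). One small overstatement: Lemma~\ref{lem:rootsCD} asserts that each $I_i$ is a root of the two expressions in \eqref{factor2}--\eqref{factor3}, not a \emph{simple} root, but this is immaterial since divisibility by $\prod_i(x-I_i)$ only requires them to be roots.
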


\begin{proof}
The polynomial $\bC(t,x)^2-\bD(t,x)^m$ has degree
(at most) $2m$ in $x$, and admits each of the $(m-2)$ distinct series
$I_i$ as a double root (Lemma~\ref{lem:rootsCD}).
Thus, there exists a polynomial $\hP(t,x)$
of degree at most~4 such that~\eqref{factor1} holds.
Let us write
\beq\label{hP-expr}
\hP(t,x) =\left(\bC(t,x)^2-\bD(t,x)^m\right)
\prod_{i=1}^{m-2} \frac1{I_i^2(1-xI_i^{-1})^2}.
\eeq
Recall that $C$ and $D$ are polynomials in $x$ with coefficients in
$\rs(\nu,w)[[t]]$, that
the series $I_i$ belong to  $\rs(\nu,w)[[t]]$ and have a non-zero
constant term
(Lemma~\ref{lem:count}). Expanding  in $x$ the above expression thus  proves
that the coefficients of $\hP$ also belong to
$\rs(\nu,w)[[t]]$.

Similarly, $\bD (t,x) \bC'_x(t,x) -\frac m 2 \bD'_x (t,x) \bC(t,x)$ is a polynomial of degree at most $m$ in $x$, since
$\bD (t,x) \bC'_x(t,x)$ and $\frac m 2 \bD'_x (t,x) \bC(t,x)$ are
polynomials of degree  $m+1$ having the same leading
coefficient.
Hence by~\eqref{eq2}, there exists a polynomial $\hQ(t,x)$ in
$\rs(\nu,w)[[t]][x]$, of degree at most~2, such that~\eqref{factor2} holds.

Finally, given that $\bD'_t(t,x)$ has degree 0 in $x$, and
$\bC'_t(t,x)$ degree at most $m-2$ (Lemma~ \ref{lem:initial-Cr}), the
polynomial $\bD (t,x) \bC'_t(t,x) -\frac m 2 \bD'_t (t,x) \bC(t,x)$
has degree at most $m$. Equation~\eqref{eq3} implies the existence of
a polynomial $\hR(t,x)$ in $\rs(\nu,w)[[t]][x]$, of degree at most~2,
such that~\eqref{factor3} holds.
\end{proof}

\subsection{Differential system}
\label{sec:diff-general}

We still assume  that $q=2+2\cos
(2\pi/m)$.
We will prove that the series $\hP$, $\hQ$ and $\hR$ of Proposition~\ref{prop:factor},
suitably normalized into series $P$, $Q$ and $R$, satisfy all
equations of  Theorem~\ref{thm:ED}.  We repeat this theorem  for
convenience.

\begin{Theorem*}
Let $q$ be an indeterminate, $\beta=\nu-1$ and
\beq\label{D-encore}
\bD(t,x)=(q\nu+\be^2)x^2-q (\nu+1 ) x+ \be t ( q-4 ) ( wq+\be ) +q.
\eeq
There exists a unique triple $(\bP(t,x), \bQ(t,x),\bR(t,x))$ of
 polynomials in $x$ with coefficients in $\qs[q,\nu,w][[t]]$, having
 degree $4, 2$ and $2$ respectively in $x$, such that
\beq\label{init-general}
 \begin{array}{ll}
   [x^4]\bP(t,x)=1,& \quad\bP(0,x)=x^2(x-1)^2,\\
  {[x^2]}\bR(t,x)=\nu+1-w(q+2\beta), &\quad\bQ(0,x)= x(x-1),
 \end{array}
\eeq
and
\beq\label{de-encore}
\frac {1}{\bQ}\frac{\partial }{\partial t} \left( \frac{ \bQ^2}{\bP
    \bD^2}\right)=\frac 1{\bR}\frac{\partial }{\partial x} \left(
  \frac{ \bR^2}{\bP \bD^2}\right).
\eeq

Let $\bP_j(t)\equiv P_j$ (resp. $\bQ_j$, $R_j$) denote the coefficient
of $x^j$ in
$\bP(t,x)$ (resp. $\bQ(t,x)$, $\bR(t,x)$).
 The Potts \gf\ of planar maps,
$M_1\equiv M(q,\nu, t, w;1)$, can be expressed in terms of the series
$\bP_j$ and $\bQ_j$
using $\tM_1:=t^2M_1$ and
\begin{multline}\label{M11-PQ}
12  \left( {\beta}^{2}+q\nu \right) \tM_1  +
 P_3 ^{2}/4
+2 t \left(1+\nu -w(2 \beta+q) \right)P_3  -P_2
 +2 Q_0
=4 t \left(1+  w(3 \beta +q) \right) .
\end{multline}
An alternative characterization of $M_1$ is in terms of the derivative
of $\tM_1$:
\beq\label{M1-alt}
2\left( {\beta}^{2}+q\nu\right) \tM_1'
+ \left( 1+\nu- w\left( 2\beta+q \right)  \right) P_3 /2-R_1
=2+2 \beta w .
\eeq
The series $M_1$ is differentially algebraic, that is,  satisfies a
non-trivial differential equation  with respect to the edge variable
$t$.
The same holds for each series $P_j$, $Q_j$ and $R_j$.
\end{Theorem*}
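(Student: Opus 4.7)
The plan is to first establish the theorem at the countable family of values $q=2+2\cos(2\pi/m)$ treated in Section~\ref{sec:cat-M}, and then pass to $q$ an indeterminate by a polynomial identity argument. At such a special value of $q$, start from the factorizations~\eqref{factor1}--\eqref{factor3} of Proposition~\ref{prop:factor}. Introducing the auxiliary rational function $F:=\bC/\bD^{m/2}$, these identities take the compact form
\[
F^2-1=\frac{\hP}{\bD^m}\prod_i(x-I_i)^2,\qquad F'_x=\frac{\hQ}{\bD^{m/2+1}}\prod_i(x-I_i),\qquad F'_t=\frac{\hR}{\bD^{m/2+1}}\prod_i(x-I_i).
\]
Eliminating $\prod_i(x-I_i)$ between these three relations yields the pair
\[
(F'_x)^2=(F^2-1)\frac{\hQ^2}{\hP\bD^2},\qquad (F'_t)^2=(F^2-1)\frac{\hR^2}{\hP\bD^2},
\]
which is the exact analogue of~\eqref{elimZ-bip} in the toy example. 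Differentiating the first with respect to $t$ and the second with respect to $x$, then using the symmetry of $F''_{xt}$ together with the identity $\hR F'_x=\hQ F'_t$ (read off from the factorizations), eliminates $F''_{xt}$ and produces the PDE $\frac{1}{\hQ}\partial_t(\hQ^2/(\hP\bD^2))=\frac{1}{\hR}\partial_x(\hR^2/(\hP\bD^2))$.

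Next I would rescale $(\hP,\hQ,\hR)$ by functions of $t$ alone to obtain $(P,Q,R)$ with the leading coefficients prescribed in~\eqref{init-general}; this rescaling preserves the PDE. Checking the initial values at $t=0$ is a direct computation combining Lemma~\ref{lem:initial-Cr} (leading coefficients of $\bC$) with the constant terms $I_i(0)=1/(1+\beta v_i)$ from the proof of Lemma~\ref{lem:count}. To derive the formula~\eqref{M11-PQ}, expand~\eqref{factor1} in $x$: the coefficients $C_m,C_{m-1},C_{m-2},C_{m-3}$ are explicit by Lemma~\ref{lem:initial-Cr} and involve $M(1)$ linearly through $C_{m-3}$, while on the right-hand side the elementary symmetric functions of the $I_i$'s are absorbed into $P_2,P_3,Q_0$. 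Matching a suitable coefficient of $x$ then gives a linear relation in $M_1$ of precisely the shape~\eqref{M11-PQ}. The derivative identity~\eqref{M1-alt} is extracted analogously from~\eqref{factor3}.

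The central step, and the main obstacle, is to upgrade the identities from the countable family $\{2+2\cos(2\pi/m):m\ge 3\}$ to $q$ an indeterminate. For this I would first establish uniqueness of the triple $(P,Q,R)$ satisfying~\eqref{de-encore} together with~\eqref{init-general}: clearing denominators in~\eqref{de-encore} yields a polynomial identity in $x$ of degree $8$, and the nine coefficient equations are ODEs in $t$ in the nine unknowns $P_0,\dots,P_3,Q_0,Q_1,R_0,R_1$ (together with $Q_2$, which~\eqref{ed-simple} and~\eqref{init-general} force to $1$). One shows by induction on $n$ that extracting the coefficient of $t^n$ in these ODEs determines $[t^{n+1}]$ of every unknown from the initial data, giving a unique solution with coefficients in $\qs[q,\nu,w][[t]]$. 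For each special $q=2+2\cos(2\pi/m)$, the triple built from Proposition~\ref{prop:factor} satisfies the same system and the same initial conditions, hence coincides with the specialization of the generic triple. Consequently~\eqref{M11-PQ} holds for infinitely many values of $q$, and since both sides are power series in $t$ whose coefficients are polynomials in $q,\nu,w$, the identity extends to $q$ an indeterminate. The same argument handles~\eqref{M1-alt}.

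Finally, the differential algebraicity of $M_1$ and of each series $P_j,Q_j,R_j$ follows by a standard elimination argument. The PDE~\eqref{de-encore} amounts to a system of nine polynomial ODEs in the eleven series $P_j,Q_j,R_j$, with two of them ($P_4$ and $R_2$) already explicit; hence every component is differentially algebraic over $\qs(q,\nu,w,t)$. Combined with the algebraic relation~\eqref{M11-PQ}, which expresses $M_1$ polynomially in $P_2,P_3,Q_0$, the same conclusion holds for $M_1$, and a single non-trivial polynomial ODE in $t$ can be obtained in principle by iterated differential resultants.
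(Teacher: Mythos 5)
Your proposal follows the paper's own strategy faithfully at a high level, and the middle portion (the factorization lemma, the derivation of the PDE by eliminating $\prod_i(x-I_i)$, the rescaling to normalize leading coefficients, and the passage from $q=q_m$ to an indeterminate $q$) is essentially the published route. However, three steps are glossed over in a way that conceals genuine difficulties, and as written they would fail or leave part of the claim unestablished.

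\textbf{Uniqueness.} You assert that extracting the coefficient of $t^n$ from the nine coefficient-equations of~\eqref{de-encore} ``determines $[t^{n+1}]$ of every unknown from the initial data,'' as a direct induction. This is not the case: two of the eight relevant coefficient-equations at level $i$ (namely $\Eq_{i,0}$ and the full sum $\sum_{j=0}^{7}\Eq_{i,j}$) do not involve any of the level-$i$ unknowns $P_{i,j},Q_{i,j},R_{i-1,j}$ at all, because of cancellations forced by $P(0,1)=P_0'(0,1)=Q(0,1)=0$ and $P_{0,0}=P_{0,1}=Q_{0,0}=0$. The naive $8\times 8$ linear system is therefore degenerate. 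The paper must substitute two equations borrowed from level $i+1$ (the set $\cS_i$ of~\eqref{cSi}) and then verify that the resulting determinant $256\,i^6q^3\beta^7w(q-4)(q\nu+\beta^2)^2$ is nonzero. This is the crux of the uniqueness argument and cannot be omitted.

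\textbf{Polynomiality of coefficients.} The theorem asserts coefficients in $\qs[q,\nu,w][[t]]$. The inductive solution of the linear systems above only yields coefficients in $\qs(q,\nu,w)[[t]]$, a priori with poles along $q=0$, $q=4$, $\beta=0$, $w=0$, $q\nu+\beta^2=0$. Ruling these out is a nontrivial separate step (Section~\ref{sec:poles} of the paper), which uses auxiliary nondifferential identities such as~\eqref{M1bis-2}--\eqref{Q1R-M1} together with a different argument for $w=0$ relying on the explicit construction at $q=q_m$; you do not address it, so your proof only establishes a weaker coefficient ring.

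\textbf{Differential algebraicity.} You claim that having nine polynomial ODEs among the eleven series, with two of them explicit, ``hence'' makes each component differentially algebraic, via ``iterated differential resultants.'' This conclusion is not automatic for an implicit first-order system. The paper instead invokes the approximation theorem of Denef and Lipshitz~\cite[Thm.~2.1]{denef-lipshitz}, a differential analogue of Artin approximation, which converts the uniqueness of the formal power series solution into differential algebraicity of each component. Some replacement for that theorem is needed.

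A smaller point: you propose to read~\eqref{M11-PQ} off by expanding~\eqref{factor1} alone and ``absorbing'' the elementary symmetric functions of the $I_i$'s into $P_2,P_3,Q_0$. But~\eqref{factor1} relates $C^2-D^m$ only to $\hP$ and $\prod_i(x-I_i)^2$ and cannot produce $Q_0$; one must combine~\eqref{factor1} and~\eqref{factor2}, for instance via the identity $\hP\bigl(D C'_x-\tfrac m2 D'_x C\bigr)^2=\hQ^2\bigl(C^2-D^m\bigr)$ as in the paper, before extracting the coefficient of $x^{2m+2}$.
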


\medskip
\noindent
{\bf A differential equation relating {\boldmath$\hP, \hQ$} and {\boldmath$\hR$}.}
We first prove that~\eqref{de-encore} holds for $\hP, \hQ$ and $\hR$. This results from the elimination of $\bC$ and $\prod_i(x-I_i)$ in the three
equations of Proposition~\ref{prop:factor}. The derivation is
analogous to that of Section~\ref{sec:diff-bip}.
Let us first eliminate the product, in such a way that $\hQ^2/(\hP D^2)$ and
$\hR^2/(\hP D^2)$ naturally appear:
$$
\frac{\left(DC'_x-\frac m 2 D'_xC\right)^2}{D^2(C^2-D^m)}= \frac{\hQ^2}{\hP
  D^2}
\qquad \hbox{ and } \qquad
\frac{\left(DC'_t-\frac m 2 D'_tC\right)^2}{D^2(C^2-D^m)}= \frac{\hR^2}{\hP
  D^2}.
$$
Let us differentiate the first equation with respect to $t$, and the
second one with respect to $x$.
The ratio of the two resulting identities
is
$$
\frac{DC'_x-\frac m 2 D'_xC}{DC'_t-\frac m 2 D'_tC},
$$
which, according to the last two equations of Proposition~\ref{prop:factor}, is
$\hQ/\hR$. This gives
 \beq\label{ED-hat}
\frac{1}{\hQ} \frac{\partial}{\partial t} \left(\frac{ \hQ^2}{\hP\bD^2}\right)
=\frac{1}{\hR} \frac{\partial}{\partial x}
\left(\frac{\hR^2}{\hP\bD^2}\right).
\eeq
This equation coincides with~\eqref{de-encore} but with hats over the letters
$P, Q$ and $R$.  The series $\bP, \bQ$ and $\bR$ occurring in the
theorem will simply be normalizations of $\hP, \hQ$ and $\hR$ by
multiplicative constants independent from $t$ and $x$.

\medskip
\noindent {\bf The leading coefficients of {\boldmath$\hP,\hQ,\hR$}.}
Proposition~\ref{prop:factor} gives
\begin{align*}
  \displaystyle [x^4]\hP(t,x)&=  [x^{2m}]\left(\bC(t,x)^2-\bD(t,x)^m\right),\\
 {[x^2]}\hQ(t,x)&=
 [x^m]\left(\bD(t,x)\bC'_x(t,x)- \frac m 2 \bD'_x(t,x)\bC(t,x)\right),\\
 {[x^2]}\hR(t,x)&= 
 [x^m]\left(\bD(t,x)\bC'_t(t,x)- \frac m 2 \bD'_t(t,x)\bC(t,x)\right).
\end{align*}
Recall that $\bD$ is defined  by~\eqref{D-encore}, and that
Lemma~\ref{lem:initial-Cr} gives the
leading coefficients of
$C(t,x)$. This allows to determine the leading coefficients of $\hP,\hQ,\hR$:
\beq \label{tCPQR}
\begin{cases}
 \hP_4:=[x^4] \hP(t,x)&= \delta^m \left(\Tch_m(\gamma)^2-1\right) = \frac q{m^2} ( q/ 4 -1) \delta^{m-1} \Tch'_m(\gamma)^2,
\\
\hQ_2:=[x^2]\hQ(t,x)&= q \beta (q/4- 1 )\delta^{(m-1)/2}
\Tch'_m(\gamma),
\\
\hR_2:=[x^2]\hR(t,x)&=  q \beta (q/4-1) \delta ^{(m-1)/2}\Tch'_m(\gamma)(\nu+1-w(q+2\beta)),
\end{cases}
\eeq
where $\delta=\beta^2+q\nu$ and
$\gamma= (q+2\be)/(2\sqrt \delta)$.
 In the expression of $\hP_4$, we have used the fact that, for all $x$,
\beq\label{eT01}
(u^2-1){\Tch'_m}(u)^2=m^2\left(\Tch_m(u)^2-1\right).
\eeq
Observe that these coefficients  are independent of $t$.
Let us define
\beq\label{PQR-nohat}
\bP(t,x)=\frac{\hP(t,x)}{\hP_4}, \qquad
\bQ(t,x)=\frac{\hQ(t,x)}{\hQ_2}
\qquad \hbox{ and } \qquad \bR(t,x)=\frac{\hR(t,x)}{\hQ_2}.
\eeq
 We have intentionally  normalized $\hR(t,x)$ by $\hQ_2$ rather than $\hR_2$.
 It then follows from~\eqref{ED-hat} that $\bP$, $\bQ$, $\bR$
satisfy the differential system~\eqref{de-encore}. Moreover
\beq\label{in}
[x^4]\bP(t,x)=1,  \qquad [x^2]\bQ(t,x)=1 \qquad \hbox{and} \qquad
[x^2]\bR(t,x)=\nu+1-w(q+2\beta),
\eeq
so that the left-hand side of the initial
conditions~\eqref{init-general} hold.
(We do not give the value of $[x^2]\bQ(t,x)$ in the
statement of the
theorem because it is a consequence of the differential system and the
initial conditions, as will be seen in
Section~\ref{sec:unique-general}.)

\medskip

\noindent
{\bf The case {\boldmath$t=0$}.}
Let us now establish the right-hand side of the initial
conditions~\eqref{init-general} by  determining the values $\bP(0,x)$
and $\bQ(0,x)$. Proposition~\ref{prop:factor} gives
\begin{align}
\bC(0,x)^2-\bD(0,x)^m&= \displaystyle \hP(0,x) \prod_{i=1}^{m-2}(x-I_i(0))^2,\label{e1t0}
\\
\bD(0,x)\bC'_x(0,x)- \frac m 2 \bD'_x(0,x)\bC(0,x)&= \displaystyle \hQ(0,x) \prod_{i=1}^{m-2}(x-I_i(0)).\label{e2t0}
 \end{align}
We will now combine our knowledge of $\bC(0,x)$ obtained
in~\cite{bernardi-mbm-alg} with the properties of the series $I_i(t)$
gathered in Lemma~\ref{lem:count} to determine $\hP(0,x)$ and $\hQ(0,x)$.

Recall that $C_r(t) $ denotes the coefficient of $x^r$ in $C(t,x)$
(see~\eqref{C-expr}). The constant term of the series $C_r(t)$ has been determined
in~\cite[Lemma~16]{bernardi-mbm-alg}. With the notation used there,
$C_r(0)=L_r(0;1)$, where
$$
\sum_{r=0}^m L_r(t;y) x^r= \bD(t,x)^{m/2}
\,\Tch_m \left(
\frac{\bN(y,x)}{2\sqrt{\bD(t,x)}}
\right),$$
where $\bN$ and $\bD$ are given by~\eqref{N-expr} and~\eqref{D-expr}.
Hence
\beq\label{C0-explicit}
C(0,x)\equiv \sum_{r=0}^m C_r(0) x^r= \hD^{m/2}
\,\Tch_m \left( \frac{\hN}{2\sqrt{\hD}}\right),
\eeq
where $\hN=\bN(1,x)=(q+2\beta)x -q$ and
$\hD=\bD(0,x)=(q\nu+\be^2)x^2-q (\nu+1 ) x+ q$.
Thus we first want to factor
$$
C(0,x)^2-D(0,x)^m= \hD^m\left( \Tch_m\left( \frac{\hN}{2\sqrt{\hD}}\right)^2-1\right).
$$
Denote $n=\lfloor m/2\rfloor$. By~\eqref{eT01} and~\eqref{roots},
\beq\label{T2m1}
\Tch_m(u)^2-1=\frac 1 {m^2} (u^2-1) \Tch'_m(u)^2
=
4^{m-1} (u^2-1) (u^2)^{ \mathbbm{1}_{m=2n}}
\prod_{j=1}^{ \lfloor {\frac{m-1}2}\rfloor}
 \left( u^2-\cos^2 \frac{j\pi} m\right)^2.
\eeq
(We have also used the fact that the dominant coefficient of $\Tch_m$
is $2^{m-1}$.)
Thus 
\begin{align}\label{pol-deg}
 \bC(0,x)^2-\bD(0,x)^m&= \frac 1 4
(\hN^2-4\hD)( \hN^2)^{ \mathbbm{1}_{m=2n}} \prod_{j=1}^{ \lfloor {\frac{m-1}2}\rfloor} \left( \hN^2-4\hD\cos^2 \frac{j\pi} m\right)^2\nonumber \\
&= \frac q 4 (q-4)
(x-1)^2( \hN^2)^{ \mathbbm{1}_{m=2n}}\prod_{j=1}^{ \lfloor
  {\frac{m-1}2}\rfloor} \left( \hN^2-4\hD\cos^2 \frac{j\pi}
  m\right)^2.
\end{align}
Let us now compare this to~\eqref{e1t0}. By Lemma~\ref{lem:count},
$I_i(0)\not =1$ for all $i$, so that $(x-1)^2$ is necessarily a factor
of $\hP(0,x)$. We will now prove that $x^2$ is also a factor of $\hP(0,x)$.

Consider the term obtained for $j=1$ in~\eqref{pol-deg}. Using $4\cos^2( {\pi}/ m)=q$, we get
$$
\hN^2-4\hD\cos^2 \frac{\pi} m  = \hN^2-q\hD =x\beta (q-4) (q -x(q+\beta)),
$$
which has a factor $x$.
By Lemma~\ref{lem:count},
$I_i(0)\not =0$ for all $i$, so that $x^2$ is necessarily a factor of
$\hP(0,x)$. We have proved that $\hP(0,x)$, hence also $\bP(0,x)$, is
divisible by $x^2(x-1)^2$. Moreover $\bP(0,x)$ has degree 4 and
constant term 1, thus $\bP(0,x)=x^2(x-1)^2$.

\medskip
We now wish to determine $\hQ(0,x)$.
We have just seen that $x=0$ and $x=1$ are double roots of
$C(0,x)^2-D(0,x)^m$. So they cancel as well the derivative
$2C(0,x)C(0,x)'_x-mD(0,x)^{m-1}D'_x(0,x)$, and since $D(0,0)\not =0$ and $D(0,1)\not=0$,
they must also cancel $\bD(0,x)\bC'_x(0,x)- \frac m 2
\bD'_x(0,x)\bC(0,x)$. Let us now return to the
factorisation~\eqref{e2t0}. Since $I_i(0)\not = 0,1$, we see that
$x(x-1)$ must divide $\hQ(0,x)$. Hence it  also divides
$\bQ(0,x)$. Moreover we have proved above that $[x^2]\bQ(0,x)=1$ so
that $\bQ(0,x)=x(x-1)$  as stated in the theorem.

\subsection{The Potts \gf\ of planar maps}
\label{sec:M1-PQR}
We still assume  that $q=2+2\cos
(2\pi/m)$. Let us 
 prove that the Potts \gf\ $M_1$ is related to the $P_j$'s
and $Q_j$'s by~\eqref{M11-PQ} and~\eqref{M1-alt}. It follows from the first two
identities of Proposition~\ref{prop:factor} that
$$
\hP\left( \bD\bC'_x- \frac m 2 \bD'_x\bC\right)^2
= \hQ^2\left(\bC^2-\bD^m\right),
$$
where all series and polynomials are evaluated at $(t,x)$. Using the
normalization~\eqref{PQR-nohat}, this gives
\beq\label{id}
\bP\left(\bD\bC'_x- \frac m 2 \bD'_x\bC\right)^2
= qm^2\beta^2(q/4-1) \bQ^2\left(\bC^2-\bD^m\right),
\eeq
since ${\hQ_2}^2= qm^2\beta^2(q/4-1)\hP_4$ by~\eqref{tCPQR}.
Recall that $\bD$ is defined by~\eqref{D-encore},  that the
leading coefficients of $\bC$ are given in
Lemma~\ref{lem:initial-Cr}, and that $C^2-D^m$ has degree $2m$.  Recall also the known values~\eqref{in} of $P_4$ and
$Q_2$.
Extracting from~\eqref{id} the coefficients of $x^{2m+4}$, $x^{2m+3}$, and $x^{2m+2}$
 gives:
 \begin{itemize}
 \item for the coefficient of $x^{2m+4}$, a tautology, equivalent
   to~\eqref{eT01} taken at $u=\gamma$,
\item for the coefficient of $x^{2m+3}$, an interesting relation between $\bP_3$ and $\bQ_1$, namely
\beq\label{P3Q1-init}
\bP_3=2\bQ_1+4t(1+\nu)-4tw(2\beta+q),
\eeq
\item for the coefficient of $x^{2m+2}$, the
identity~\eqref{M11-PQ}. In the calculation we use once
again~\eqref{eT01} to relate $\Tch_m(\gamma)$ and $\Tch'_m(\gamma)$,
as well as~\eqref{P3Q1-init} to express $Q_1$ in terms of $P_3$.
 \end{itemize}
The second characterization~\eqref{M1-alt} of $M_1$ is obtained in a similar
fashion by combining  instead  the second
and third identities of Proposition~\ref{prop:factor}: they imply
$$
R\left( DC'_x-\frac m 2 D'_x C\right) =Q\left( DC'_t-\frac m 2 D'_t
  C\right),
$$
and extracting the coefficient of $x^{m+1}$
gives an expression of $\tM_1'$ in terms
of $Q_1$ and $R_1$, which we transform into~\eqref{M1-alt} using~\eqref{P3Q1-init}.

\subsection{Uniqueness of the solution}
\label{sec:unique-general}

The arguments in this subsection apply whether $q$ is an
indeterminate, or $q=2+2\cos (2\pi/m)$.
The differential  system of Theorem~\ref{thm:ED}  can be written as
\beq \label{syst-line}
2Q_t'PD-QP_t'D-2QPD_t'=2R_x'PD-RP_x'D-2RPD_x'.
\eeq
Since $P$, $Q$ and $R$ have respective degree 4, 2 and 2 in $x$, this
identity relates two polynomials in $x$ of degree at most
8. Recall that $P_4=R_2=1$. Extracting the coefficient of $x^8$ gives $Q'_2(t)=0$, which, with the
initial condition $Q_2(0)=1$, implies $Q_2(t)=1$. Hence  the
leading coefficients of $P$, $Q$ and $R$ (that is, the series $P_4$, $Q_2$ and $R_2$) are independent of $t$,
and the left-hand side of~\eqref{syst-line}, as well as  its right-hand side,
 has degree at most 7 in $x$. And we are left with eight unknown series.

We denote   $P_{i,j}:=[t^i]P_j$, and similarly for $Q$ and $R$, so that
$$
P(t,x)= \sum_{i,j} P_{i,j} t^i x^j.
$$
Let $\cC_i$ be the following 8-tuple of coefficients:
$$
\cC_i=\left(P_{i,0},P_{i,1},P_{i,2},P_{i,3}; Q_{i,0},Q_{i,1}; R_{i-1,0},R_{i-1,1}\right).
$$
The right-hand side of~\eqref{init-general} gives us the values of $P(t,x)$
and $Q(t,x)$ at $t=0$, so that
$$
\cC_0=(0,0,1, -2;  0,-1;  0,0).
$$
 We will  show by induction on $i\ge 1$ that the differential system determines  the eight
coefficients of $\cC_i$, and that these coefficients
are rational functions of $q,\be,w$.

For $i\ge 1$ and $0\le j \le 7$, the equation $\Eq_{i,j}$  obtained by extracting the coefficient of
$t^{i-1}x^j$ in~\eqref{syst-line} reads
$$
\sum_{i_1+i_2+i_3=i\atop j_1+j_2+j_3=j}
(2i_1-i_2-2i_3)Q_{i_1,j_1}P_{i_2,j_2}D_{i_3,j_3}=\sum_{i_1+i_2+i_3=i-1\atop
  j_1+j_2+j_3=j+1}(2j_1-j_2-2j_3)R_{i_1,j_1}P_{i_2,j_2}D_{i_3,j_3},
$$
where $D_{i,j}=[t^ix^j]D(t,x)$. This is a linear equation in the
unknowns of $\cC_i$, of the form
\begin{multline}\label{eq:induction-ij}
\sum_{j_1+j_2+j_3=j}\left(
  2iQ_{i,j_1}P_{0,j_2}D_{0,j_3}-iQ_{0,j_1}P_{i,j_2}D_{0,j_3}
\right) \\- \sum_{j_1+j_2+j_3=j+1}(2j_1-j_2-2j_3)R_{i-1,j_1}P_{0,j_2}D_{0,j_3}=K_{i,j},
\end{multline}
where $K_{i,j}$ is a polynomial in the coefficients of $\cup_{s<i}
\cC_s$, with coefficients in $\qs[q, \be, w]$. It would be convenient if the eight equations $\Eq_{i,j}$, for
$j\in\llbracket 0, 7\rrbracket$, could define the eight unknown coefficients
of $\cC_i$, but this is not exactly what happens, for two reasons.

First, the equation $\Eq_{i,0}$  involves none of the coefficients of
$\cC_i$. Indeed, we see on~\eqref{eq:induction-ij}, specialized at $j=0$, that
the only coefficients of $\cC_i$ that $\Eq_{i,0}$  may involve are  $Q_{i,0}$,
$P_{i,0}$, $R_{i-1,0}$ and $R_{i-1,1}$.  But they do not occur, because
$P_{0,0}=P_{0,1}=Q_{0,0}=0$ (this follows from the initial
conditions~\eqref{init-general}). Hence this equation reads $K_{i,0}=0$
and only involves coefficients of $\cup_{s<i}\cC_s$.
We leave it to the reader to check that it is linear in the coefficients
of $\cC_{i-1}$, provided $i>2$.

Then, a similar  problem happens with  the sum of the eight equations $\Eq_{i,j}$, for
$j\in\llbracket 0, 7\rrbracket$: it does not involve any of the coefficients of
$\cC_i$ either. Indeed, it reads
\begin{multline*}
\sum_{j_1,j_2,j_3}\left(2iQ_{i,j_1}P_{0,j_2}D_{0,j_3}-iQ_{0,j_1}P_{i,j_2}D_{0,j_3}\right)
\\- \sum_{j_1,j_2,j_3}(2j_1-j_2-2j_3)R_{i-1,j_1}P_{0,j_2}D_{0,j_3}=\sum_{j=0}^7K_{i,j}.
\end{multline*}
But the left-hand side is the coefficient of $t^{i-1}$ in
\begin{multline*}
  2Q'_t(t,1)P(0,1)D(0,1) - Q(0,1)P'_t(t,1)D(0,1)
- 2R'_x(t,1)P(0,1)D(0,1)\\+R(t,1)P'_x(0,1)D(0,1)+2R(t,1)P(0,1)D'_x(0,1),
\end{multline*}
and this series is zero because $P(0,1)=Q(0,1)=P_x'(0,1)=0$ (see the
initial conditions~\eqref{init-general}). Hence
this sum of equations only involves coefficients of $\cup_{s<i}\cC_s$.
We leave it to the reader to check that again, this sum is linear in
the coefficients of $\cC_{i-1}$, provided $i>2$.

These observations lead us to consider the following system of eight equations:
\beq\label{cSi}
\cS_i=\left\{
\sum_{j=0}^7\Eq_{i+1,j}, \Eq_{i+1,0},\Eq_{i,2},\Eq_{i,3},\ldots,\Eq_{i,7}\right\}.
\eeq
Solving the system $\cS_1$ gives
\beq\label{C1-sol}
\cC_1=
\left(
-4, 8-2wq,4w(q-\be) -2\be-4, 2\be-2wq ;
wq+2\be+4, 4w\be-\be+wq-4 ; 2, wq-\be-4
\right).
\eeq
%
Moreover for $i>1$,  $\cS_i$ is a system of eight  linear equations for
the eight unknowns of $\cC_i$ in terms of the rational functions in
$\cup_{s<i} \cC_s$. The determinant of this linear system is
$$
 256\,{i}^{6}{q}^{3}{\be}^{7}w \left( q-4 \right)
\left( q\nu+\be^2\right) ^{2},
$$
which is non-zero when $q$ is an indeterminate but also when  $q\neq 0,4$ is of
the form $2+2\cos (2\pi/m)$.
By induction, this proves that the
coefficients $P_{i,j},Q_{i,j},R_{i,j}$ are uniquely determined by the
differential system
and its initial conditions.
Moreover
these coefficients lie in  $\qs(q,\beta,w)$, and their denominators
are products of  terms
$q$, $\beta$, $w$, $(q-4)$ and
$(q\nu+\be^2)$.

\subsection{About possible singularities}
\label{sec:poles}
It remains to prove that the coefficients of the series $P_j, Q_j$ and
$R_j$ are polynomials in $q, \nu$ and $w$.

We will use three identities that we will establish later.  Their proofs do not assume anything on the singularities of the
coefficients of $P_j, Q_j$ and $R_j$. The first one
is the characterization~\eqref{M1-alt} of $\tM_1$,  established in
Section~\ref{sec:conclusion}:
\beq\label{M1bis-2}
2\left( {\beta}^{2}+q\nu\right) \tM_1'
+ \left( 1+\nu- w\left( 2\beta+q \right)  \right) \bar P_3 /2-\bar R_1
=2+2 \beta w .
\eeq
The other two are established  in Section~\ref{sec:simpl-gen}:
\begin{multline}
  \beta  \left( wq+\beta \right)  \left( q-4 \right) Q_0
+q \left( \beta+2 \right) R_0
+ 2\left(
 \beta  \left( q-4 \right)  \left( wq+\beta \right) t+ q \right)
R_1 =\\
2 \beta  \left( q-4 \right)  \left( wq-2
 \right)  \left( wq+\beta \right) t+2 q \left( wq-2 \right) ,
\label{Q0R-M1}
\end{multline}
\begin{multline}
\beta  \left( wq+\beta \right)  \left( q-4 \right) Q_1
-2 \left(  {\beta}^{2}+ q\beta+ q \right) R_0
  -q \left( \beta+2 \right)R_1 = \\
2 \beta  \left( q-4 \right)  \left( 2 \beta w+wq-\beta-2 \right)
 \left( wq+\beta \right) t-2 q \left( \beta qw-2 \beta w+wq-\beta-
2 \right) .
\label{Q1R-M1}
\end{multline}

Let us now prove by induction on $i$ that the coefficients of $\cC_i$
have no singularity at $q=4$ or $q=-\be^2/\nu$. This holds for $i=0$
and $i=1$. From~\eqref{M1bis-2}, we derive that this holds for $R_{i-1,1}$ if it holds for
$P_{i-1,3}$, which we assume by the induction hypothesis. Now if we
remove the last
equation  from the system $\cS_i$ (given by~\eqref{cSi}), we obtain
seven polynomial equations between the coefficients of $\cup_{s\le i}
\cC_s$. Once the values of $\cC_1$ are known, they are  linear  in $
P_{i,0},  P_{i,1},  P_{i,2},  P_{i,3},  Q_{i,0}, Q_{i,0}, R_{i-1,0}$, with  determinant:
\begin{multline*}
  -128 i^6q^2  \be^4 w
 \left( 6q^4\nu^3+\be q^3(\be^4+21 \be^3+36\be^2+11\be-6)
\right.\\
\left. +\be^3q^2(3\be^3 +34\be^2 +42 \be +12) +\be^5 q(3\be^2+24\be +16)+\be^7(\be+6)\right).
\end{multline*}
The last factor is irreducible, and this determinant contains no factor
$(q-4)$ nor $(q\nu+\be^2)$. This proves that the coefficients in
$\cC_i$ are not singular at $ q=4$ nor $q=-\be^2/\nu$.

Let $\Eq_i^{(1)}, \Eq_i^{(2)}, \Eq_i^{(3)}$ be the equations obtained
by extracting the coefficient of $t^i$ in the
equations~\eqref{M1bis-2}, \eqref{Q0R-M1} and \eqref{Q1R-M1}. The
system
$$
\left\{\sum_{j=0}^7 \Eq_{i+1,j}, \Eq_{4,i}, \ldots, \Eq_{7,i}, \Eq_i^{(1)}, \Eq_i^{(2)},
\Eq_i^{(3)}\right\}
$$
relates polynomially the coefficients $P_{s,j}, Q_{s,j}, R_{s,j} $ for
$s\le i$ and
the coefficients  (in $t$) of the series $M_1(t)$. For $i>1$,
it is linear in $P_{i,0}, \ldots, P_{i,3}, Q_{i,0}, Q_{i,1},
R_{i,0}, R_{i,1}$, with determinant
$$
-16 i^5\be^5(q-4)w(q\nu+\be^2)^5,
$$
and this excludes singularities at $q=0$.

Observe that the sum of~\eqref{Q0R-M1} and~\eqref{Q1R-M1} is divisible by
$\be$.  If we consider now the system
$$
\left\{\Eq_{i+1,0}, \Eq_{4,i}, \ldots, \Eq_{7,i}, \Eq_i^{(1)}, \Eq_i^{(2)},
\be^{-1}(\Eq_i^{(2)}+\Eq_i^{(3)})\right\},
$$
in the same unknowns as before, we obtain the determinant
$$
16i^5q ^2(q-4)w(q\nu+\be^2)^5,
$$
proving this time that the coefficients are not singular at $\be=0$.


\medskip
Finally, to rule out poles at $w=0$, we resort to a different
argument. First, we return to the case $q=q_m:=2+2\cos(2\pi/m)$
studied from Section~\ref{sec:cat-M} to Section~\ref{sec:M1-PQR}. A first observation is
that the series $C_r$ involved in the invariant equation~\eqref{eq:inv} are
series in $t$ with coefficients in $\qs[q,\nu,w]$. This follows from
the proof of Lemma~16 in~\cite{bernardi-mbm-alg}, using the fact that
the objects denoted by $L_r(t;y)$ belong to
$\qs[q,\nu,w,t,1/y]$. A second observation is  that the series $Y_i$ of Lemma~\ref{lem:count} satisfy $Y_i=1+t(1+\beta
v) +O(t^2)$  where $v\in \rs^*$, and that their coefficients belong to
$\rs(\nu)[w]$. This follows easily from the proof
of Lemma~\ref{lem:count}, since $P_j'(z_0)$ does not depend on $w$. Consequently, $I_i:=I(t,Y_i)$ is a series in $t$, with
a non-zero constant term \emm that does not depend on $w$,, and
its other coefficients lie in $\rs(\nu)[w]$. Using these two observations, \eqref{hP-expr} now implies that
$ \hP(t,x)$ is  a polynomial in $x$ with coefficients in
$\rs(\nu)[w][[t]]$.  By~\eqref{PQR-nohat}, the same holds for
$P(t,x)$. Similarly, the polynomials $Q(t,x)$ and $R(t,x)$ have
coefficients in $\rs(\nu)[w][[t]]$.

In the next subsection, we prove (without assuming polynomiality of
the coefficients of the series $P_j$, $Q_j$ and $R_j$), that these
series, once specialized at $q=q_m$, are indeed the coefficients of the
polynomials $P(t,x), Q(t,x)$ and $R(t,x)$ constructed for
$q=q_m$. Hence, if one of these series had a pole at $w=0$, this would
remain the case for infinitely many values  $q_m$, and this contradicts the
fact that  $P(t,x), Q(t,x)$ and $R(t,x)$ have
coefficients in $\rs(\nu)[w][[t]]$.

\subsection{Conclusion of the proof}
\label{sec:conclusion}
We can now conclude the proof of Theorem~\ref{thm:ED}. Let
$q$ be an indeterminate. We have  proved in Section~\ref{sec:unique-general} that the differential
system~\eqref{de-encore} and the initial conditions~\eqref{init-general} define the  series
$P_j$, $Q_j$ and $R_j$ uniquely as formal power series in $t$, and that their
coefficients lie in
$\qs[q,\beta, w]$.
%

Let us temporarily denote these series
by $\bar P_j$, $\bar Q_j$ and
$\bar R_j$, to avoid confusion with the series denoted $P_j$,
$Q_j$ and $R_j$ above, which depend on a specific value of $q$ of the form
$ q_m:=2+2\cos (2\pi/m) $.
Specializing the indeterminate $q$ to $q_m$ in the series $\bar P_j$,
$\bar Q_j$ and $\bar R_j$ gives series in~$t$ satisfying the
differential system and its initial conditions. But we have also proved
that this system has a unique power series solution when $q=q_m$. Thus
$\bar P_j$ evaluated at $q=q_m$ coincides with $P_j$, and similar
statements relate the series $Q_j$ and $R_j$ and their barred versions. We have proved in  Subsection~\ref{sec:M1-PQR} that when $q=q_m$, the Potts \gf\ $M_1$ is related
to the $P_j$'s, $Q_j$'s and $R_j$'s by~\eqref{M11-PQ} and~\eqref{M1-alt}. This
means that when $q=q_m$,
$$
12  \left( {\beta}^{2}+q\nu \right) \tM_1  +
 \bar P_3 ^{2}/4
+2 t \left(1+\nu -w(2 \beta+q) \right)\bar P_3  -\bar P_2
 +2 \bar Q_0
=4 t \left(1+  w(3 \beta +q) \right)
$$
and
$$
2\left( {\beta}^{2}+q\nu\right) \tM_1'
+ \left( 1+\nu- w\left( 2\beta+q \right)  \right) \bar P_3 /2-\bar R_1
=2+2 \beta w .
$$
Since all series involved in these identities have
polynomial
coefficients in $q$, and coincide for infinitely many values of $q$,
they must hold for $q$ an indeterminate.

Let us finally prove that $M_1$ is differentially algebraic. This
follows from the uniqueness of the solution of our differential system
in terms of power series, via  an
approximation theorem due to Denef and
Lipshitz~\cite[Thm.~2.1]{denef-lipshitz}. This theorem generalizes to
differential systems one of Artin's approximation theorems for
algebraic systems, and implies, in our context,  that each of the
series $P_j$, $Q_j$ and $R_j$ is differentially
algebraic. The expression of
$\tM_1=t^2M_1$ given by~\eqref{M11-PQ}, and the fact that differentially
algebraic series form a ring, implies that $M_1$ is also
differentially algebraic.

\section{Differential system for coloured triangulations}
\label{sec:triang}

We now consider triangulations, and more generally  \emm
near-triangulations,, which are planar maps in
which every  non-root face has degree 3.
We weight these maps by the number of
vertices (variable $w$), the degree of the root-face ($y$), and by
their Potts polynomial (divided by $q$). We denote by
$\Q(q,\nu,w;y)\equiv \Q(y)$ the associated \gf:
$$
\Q(q,\nu,w;y)= \frac 1 q \sum_{M} \Ppol_M(q,\nu) w^{\vv(M)}
y^{\df(M)},
$$
where the sum runs over all near-triangulations. We ignore the number
of edges, which would be redundant: a near-triangulation with $v$ vertices and
outer degree $d$ has $3v-d-3$ edges.
In our first
paper on coloured maps~\cite{bernardi-mbm-alg}, we counted edges (with
a variable $t$)
rather than vertices, using a \gf\
$Q(q,\nu,t;x, y)$
involving two catalytic variables $x$ and $y$.
It is related to $T(y)$ by:
$$
\Q(q,\nu,w;y)\equiv T(y)= w\,Q(q,\nu,w^{1/3}; 0, w^{1/3}y).
$$

Our objective is to establish a differential system for the Potts \gf\ of
near-triangulations of outer degree 1, denoted by $T_1\equiv
T_1(w)$. Note that this is
the coefficient of $y$ in $T(y)$.
More generally, we write
$$
T(q,\nu, w;y)= \sum_{d\ge 0} T_d(w) y^d,
$$
hoping that no confusion arises with the $m$th Chebyshev polynomial
$\Tpol_m$. The root-edge of near-triangulations counted by $T_1$ is a
loop. Its deletion gives a near-triangulation of outer degree 2, and
thus
$$
T_1=\nu T_2,
$$
an identity that will be useful later. The expansion of $T_1$ at order
3 reads
\begin{multline*}
T_1=\nu(q-1+\nu)w^2+\nu\left((q-1)(q-2+2\nu)
+\nu^2(q-1+\nu^2)\right.\\
\left. +2\nu(q-1+\nu)(q-1+\nu^2)
+\nu^2(q-1+\nu)^2
\right)w^3+O(w^4),
\end{multline*}
as illustrated in Figure~\ref{fig:small-triangulations}.

\begin{figure}[h]
  \centering
  \includegraphics{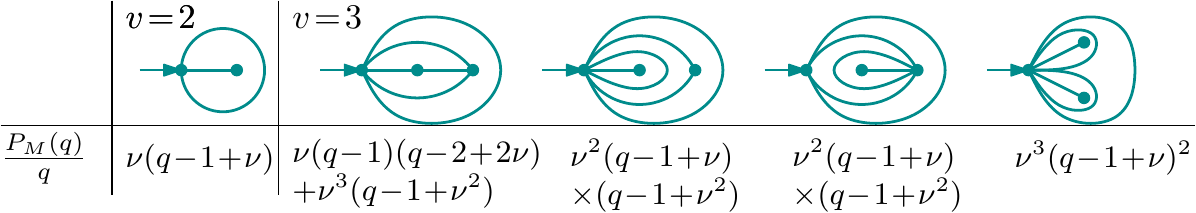}
\caption{The rooted near-triangulations of outer degree 1 with $v=2$
  and $v=3$ vertices, and their Potts polynomials (divided by $q$).}
\label{fig:small-triangulations}
\end{figure}

Our differential system for near-triangulations is  very
similar to the one obtained for general maps (Theorem~\ref{thm:ED}),
but a bit simpler.
\begin{Theorem}\label{thm:ED-Triang}
Let $q$ be an indeterminate, $\beta=\nu-1$ and
$$
\bD(w,x)=q\nu^2x^2+\beta(4\beta +q)x+ q\beta\nu(q-4)w+\beta^2.
$$
There exists a unique triple $(\bP(w,x), \bQ(w,x),\bR(w,x))$ of
 polynomials in $x$ with coefficients in
$\qs[q,\nu][[w]]$,
 having
 degree $3, 2$ and $1$ respectively in $x$, such that
\beq\label{init-T-thm}
 \begin{array}{ll}
 [x^3]\bP(w,x)=1,& \quad \bP(0,x)=x^2(x+1/4),
\\
&\quad \bQ(0,x)= x(2\nu x+1),
 \end{array}
\eeq
and
\beq\label{deT}
\frac 1{\bR}\frac{\partial }{\partial x} \left( \frac{\bR^2}{\bP
    \bD^2}\right)= \frac {1}{\bQ}\frac{\partial }{\partial w} \left(
  \frac{ \bQ^2}{\bP \bD^2}\right).
\eeq

Let $\bP_j(w)\equiv P_j$ (resp. $\bQ_j$, $\bR_j$) denote the coefficient of $x^j$ in
$\bP(w,x)$ (resp. $\bQ(w,x)$, $\bR(w,x)$).
The Potts \gf\ of
near-triangulations of outer degree $1$, denoted by
$
\Q_1(w)\equiv T_1$, can be expressed in terms of these series
using
\beq\label{Q1-PQt}
20 {\nu}^{2}q\Q_1-4{\nu}^{2} { P_1} +4  \nu{Q_0}+
 \left( { Q_1}-1 \right)  \left( { Q_1}+\nu-3 \right)+2 \nu  \left( q\nu-24 \beta-6 q \right) w =0.
\eeq
An alternative characterization of $T_1$ is
\beq\label{T1-alt}
2\nu q T'_1  =R_1- q(\beta-1)+8\beta.
  \eeq
The series $T_1$ is differentially algebraic, that is,  satisfies a
non-trivial differential equation  with respect to the vertex variable
$w$. The same holds for each series $P_j$, $Q_j$ and $R_j$.
\end{Theorem}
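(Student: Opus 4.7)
My plan is to mimic the strategy used for general planar maps in Theorem~\ref{thm:ED}, adapting each step to near-triangulations. As before, I would first set $q=2+2\cos(2\pi/m)$ and work from an invariant equation analogous to~\eqref{eq:inv}, but for the series $Q(q,\nu,t;x,y)$ of near-triangulations obtained in~\cite{bernardi-mbm-alg}. After a suitable change of variable that converts the edge weight into the vertex weight $w$ (and kills the variable $x$), this should take the form $D(w,I(w,y))^{m/2}\,T_m(N(y,I)/(2\sqrt{D}))=C(w,I)$, where $I(w,y)$ is a simple rational transform of $T(y)$, and $D(w,x)=q\nu^2x^2+\beta(4\beta+q)x+q\beta\nu(q-4)w+\beta^2$ is the polynomial given in the theorem. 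The key structural difference with the general map case is that $C(w,x)$ will now be a polynomial in $x$ of degree $m-1$ (instead of $m$), which is why $P,Q,R$ should have respective degrees $3,2,1$ (instead of $4,2,2$).

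Next, I would prove the triangulation analogue of Lemma~\ref{lem:count}: there exist exactly $m-3$ distinct formal power series $Y_1,\dots,Y_{m-3}$ in $w$ with constant term $1$ such that the corresponding $I_i:=I(w,Y_i)$ each satisfy $\Tch'_m(N(Y_i,I_i)/(2\sqrt{D(w,I_i)}))=0$, together with the non-degeneracy conditions $I_i(0)\notin\{0,1\}$, $D(w,I_i)\neq 0$ and $\partial_y I(w,Y_i)\neq 0$. The argument is identical in spirit to the odd-$m$ case: solve a quadratic equation in the variable $v$ parametrizing $z_0=1+\beta v$, checking that each relevant root of this quadratic gives rise to a unique formal power series $Y_i$. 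Once these series are in hand, I would prove a triangulation version of Lemma~\ref{lem:rootsCD} asserting $C^2=D^m$, $DC'_x=\tfrac{m}{2}D'_xC$ and $DC'_w=\tfrac{m}{2}D'_wC$ when evaluated at $x=I_i$, by differentiating the invariant equation with respect to $y$ and $w$ respectively and using $\Tch_m(G)=\pm 1$, $\Tch'_m(G)=0$. This produces the factorisations
\begin{align*}
C^2-D^m&=\widehat P(w,x)\prod_i(x-I_i)^2,\\
DC'_x-\tfrac{m}{2}D'_xC&=\widehat Q(w,x)\prod_i(x-I_i),\\
DC'_w-\tfrac{m}{2}D'_wC&=\widehat R(w,x)\prod_i(x-I_i),
\end{align*}
with $\widehat P,\widehat Q,\widehat R$ polynomial in $x$ of degrees at most $3,2,1$. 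Eliminating the product and $C$ between these three identities exactly as in the derivation of~\eqref{ED-hat} yields the differential system~\eqref{deT} (with $w$ replacing $t$) for suitable normalizations $P,Q,R$ of $\widehat P,\widehat Q,\widehat R$.

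To pin down the initial conditions at $w=0$ in~\eqref{init-T-thm}, I would specialize the factorisations above and use the known explicit form of $C(0,x)$ coming from $\Tch_m$-factorisation~\eqref{T2m1}, combined with the fact that $x=0$ and $x=-1/4$ turn out to be forced double roots of $C(0,x)^2-D(0,x)^m$ (the latter coming from the triangulation analogue of the factor $\widehat N^2-q\widehat D$ that dragged in a $(x-1)$ factor in the general map case). The leading coefficients of $\widehat P$, $\widehat Q$, $\widehat R$ are read off from Chebyshev identities as in~\eqref{tCPQR}. The expressions~\eqref{Q1-PQt} and~\eqref{T1-alt} for $T_1$ would then be obtained exactly as~\eqref{M11-PQ} and~\eqref{M1-alt} were for general maps: multiply the first two factorisations to get $\widehat P(DC'_x-\tfrac{m}{2}D'_xC)^2=\widehat Q^2(C^2-D^m)$ and extract the appropriate top coefficients in $x$; similarly take the ratio of the last two factorisations to extract $T'_1$.

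The uniqueness proof proceeds as in Section~\ref{sec:unique-general}: expand the PDE as a polynomial identity in $x$ of bounded degree, and check by a Cramer-style induction on the coefficient of $w^i$ that the system determines all coefficients of $P,Q,R$. The main obstacle I anticipate, just as in the general map case, is the polynomiality in $q,\nu$ of the resulting coefficients: the natural Cramer determinant will come with spurious factors of $(q-4)$, $q$, $\nu$ and $q\nu^2+\beta(4\beta+q)\cdot(\ldots)$, and removing each of these requires finding an alternative linear combination of the defining equations (supplemented by~\eqref{Q1-PQt}--\eqref{T1-alt}) whose determinant is free of the offending factor. Finally, once we know the solution $P,Q,R$ of~\eqref{deT}--\eqref{init-T-thm} has coefficients in $\qs[q,\nu][[w]]$, the fact that it specialises correctly at $q=2+2\cos(2\pi/m)$ for infinitely many $m$ lets us transfer~\eqref{Q1-PQt} and~\eqref{T1-alt} from those values to $q$ an indeterminate, and the Denef--Lipshitz approximation theorem~\cite[Thm.~2.1]{denef-lipshitz} then yields differential algebraicity of $T_1$ and of each $P_j,Q_j,R_j$ in the variable $w$.
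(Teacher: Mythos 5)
Your overall plan correctly tracks the shape of the paper's proof in Section~\ref{sec:triang}: set $q=2+2\cos(2\pi/m)$, use the invariant equation for near-triangulations from~\cite{bernardi-mbm-alg}, construct series $Y_i$ cancelling $\Tch'_m$, derive the three factorisations, eliminate $C$ to get the differential system, determine the data at $w=0$ from an explicit Chebyshev factorisation, and transfer to indeterminate $q$ via uniqueness plus Denef--Lipshitz. So the route is essentially the paper's. But two of your structural claims are wrong, and they would make the degree bookkeeping fail.

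First, you assert that in the triangulation case $C(w,x)$ has degree $m-1$ in $x$. It does not: the invariant equation~\eqref{eq:invT} still involves $C(w,x)=\sum_{r=0}^m C_r(w)\,x^r$ of degree $m$, exactly as for general maps. The drop in degree for $P$ (from $4$ to $3$) and $R$ (from $2$ to $1$) comes from a different mechanism: the leading coefficients of $C^2$ and $D^m$ coincide (both equal $q^m\nu^{2m}$) and cancel, so $C^2-D^m$ has degree $2m-1$ rather than $2m$; and similarly the leading coefficients of $DC'_w$ and $\tfrac m2 D'_w C$ coincide, so their difference has degree $m-1$ rather than $m$. This cancellation happens because $\gamma=\sqrt q/2=\cos(k\pi/m)$ is a root of $\Tch'_m$, so the $\Tch'_m(\gamma)$ terms that appeared in Lemma~\ref{lem:initial-Cr} for general maps identically vanish here (see Lemma~\ref{lem:initial-CrT} and the remark after it). Second, you count $m-3$ series $Y_i$, but the correct count is $m-2$, just as in the general-map case. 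With your numbers, $\hP$ would have degree $2m-2(m-3)=6$, or $(2m-1)-2(m-3)=5$ if you also kept the leading cancellation; with the paper's numbers $(2m-1)-2(m-2)=3$ comes out right.

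A few further mismatches are worth flagging, though they are easier to repair. The non-degeneracy conditions on $I_i(0)$ are not $I_i(0)\notin\{0,1\}$ but $I_i(0)\notin\{0,-1/4,-1/(2\nu)\}$: the forced double roots of $C(0,x)^2-D(0,x)^m$ live at $0$ and $-1/4$ (after the substitution $x\mapsto x^2-x$ the relevant points become $0$, $1$ and $1/2$), and the factor $2\nu x+1$ of $\hQ(0,x)$ requires avoiding $-1/(2\nu)$. The coefficients of $Y_i$ do not lie in $\rs(\nu,w)$ here: they lie in $\overline{\rs(\beta)}$, and the constant term is $1+O(\beta)$ rather than exactly $1$, because the defining polynomial at $w=0$ has degree $4$ and one has to track its Puiseux roots near $\beta=0$. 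Finally, the statement that the invariant-equation series $C_r$ depend on $t$ only through $w=t^3$ requires an argument (the paper derives it from the structure of~\cite[Lemma~19]{bernardi-mbm-alg}); you passed over this with a vague ``change of variable that kills the variable $x$,'' which is not how the degree reduction arises.
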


Eq.~\eqref{deT} is, in compact form, a system of eight differential
equations in $w$ relating the 9 series $P_j$, $Q_j$, $R_j$. Since
$P_3$ is given explicitly, we have in fact as many equations as
unknown series. We will  see that $Q_2=2\nu$. In fact, we will prove that~\eqref{deT}, combined with the initial
conditions~\eqref{init-T-thm}, determines uniquely all series $P_j$,
$Q_j$, and $R_j$.
For instance,
$$
  P_0=-\be w+\be\left( 8 q +q(q-12)\be/4-(q+6)\be^2-3\be^3\right)w^2+ O(w^3).
$$

We expect $T_1$ to satisfy a differential equation of order 4 (Section~\ref{sec:simpl-gen}). We will
work out in details several special cases in which $T_1$ satisfies a
second order DE
(Sections~\ref{sec:nu=0} to~\ref{sec:q=0}).

The proof of the above theorem is  similar to that of
Theorem~\ref{thm:ED}, and we mostly tell where these proofs differ,
without giving details otherwise.

\subsection{An equation with one catalytic variable}
As in the case of general planar maps, we  begin with
 an equation for the series $T(y)$, taken from~\cite{bernardi-mbm-alg}.
We assume  that $q=2+2\cos (2k\pi/m)$ with $k$ and $m$ coprime and
$0<k<2m$. We still write $\beta=\nu-1$. We introduce the following
notation:
\begin{itemize}
\item $I(w,y)\equiv I(q,\nu,w;y)$ is a variant of the \gf\
  $T(q,\nu,w;y)$:
$$
I(w,y)=yq\Q(q,\nu,w;y)-\frac{1}{y}+\frac{1}{y^2},
$$
\item $N(y,x)$ and $D(w,x)$ are the following (Laurent) polynomials:
$$
\bN(y,x)= \beta(4-q)\by+q \nu x +\beta(q-2),
$$
with $\by=1/y$, and
\beq\label{D-exprT}
\bD(w,x)=q\nu^2x^2+\beta(4\beta +q)x+ q\beta\nu(q-4)w+\beta^2.
\eeq
\end{itemize}
We still denote by $\Tch_m$ the $m$th Chebyshev polynomial of the first
kind.
\begin{Proposition}
  There exist $m+1$ \fps\ in $w$ with coefficients in
$\qs(q,\nu)$, denoted $C_0(w), \ldots, C_m(w)$, such that
\beq\label{eq:invT}
\bD(w,I(w,y))^{m/2} \,\Tch_m\left(\frac{\bN(y,I(w,y))}{2\sqrt
    {\bD(w,I(w,y))}}\right)= \sum_{r=0}^m C_r(w) I(w,y)^r.
\eeq

\end{Proposition}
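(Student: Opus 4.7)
The plan is to mirror the derivation given for general planar maps as equation~\eqref{eq:inv}, which the authors obtained by combining Corollary~10 and Lemma~16 of~\cite{bernardi-mbm-alg}. The starting point is the functional equation with two catalytic variables $x$ and $y$ for $Q(q,\nu,t;x,y)$ established in~\cite{bernardi-mbm-alg} for near-triangulations. I would first specialize so as to reduce to an equation with one catalytic variable $y$ governing the series $T(y)$, and then convert it into an equation for the modified series $I(w,y) = yq T(y) - 1/y + 1/y^2$. The purpose of this normalization is exactly to bring the one-variable equation into the canonical quadratic-type form whose roots can be parametrized trigonometrically.

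The second step is to construct, following~\cite{bernardi-mbm-alg}, the rational involution $\sigma$ acting on $y$ that leaves $I(w,y)$ unchanged, together with its iterates. The assumption $q=2+2\cos(2k\pi/m)$ is what forces this action to have order dividing $m$, producing $m-1$ algebraic conjugates $y_1, \ldots, y_{m-1}$ of $y$ with $I(w,y_j)=I(w,y)$. Concretely, $I$ qualifies as a \emph{Tutte invariant}: rational in $y$, fixed under $\sigma$, of series valuation matching what~\cite{bernardi-mbm-alg} requires.

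The main step, and the principal obstacle, is to prove that the function
$$
\Phi(y) := \bD(w,I(w,y))^{m/2}\,\Tch_m\!\left(\frac{\bN(y,I(w,y))}{2\sqrt{\bD(w,I(w,y))}}\right)
$$
is also a Tutte invariant. Here $\bD$ depends on $y$ only through $I(w,y)$, so all the $y$-dependence of $\Phi$ is concentrated in the $\by$-term of $\bN(y,I(w,y))$. The computation I would carry out is to check that under $\sigma$ the argument of $\Tch_m$ transforms as $\cos\theta\mapsto\cos(\theta+2k\pi/m)$ for a parameter $\theta$ attached to~$y$; the identity $\Tch_m(\cos\alpha)=\cos(m\alpha)$ then shows $\Phi$ is $\sigma$-invariant. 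Parity of $\Tch_m$ (even if $m$ is even, odd if $m$ is odd) ensures that only integer powers of $\bD$ survive on the left-hand side, so $\Phi$ is a genuine Laurent polynomial in $y$ with coefficients in $\qs(q,\nu)[[w]]$, not a Puiseux series. This parity/trigonometric book-keeping is the technical heart of the argument and the place where the specific forms~\eqref{D-exprT} of $\bD$ and of $\bN$ for triangulations (as opposed to general maps) must be checked to behave correctly under~$\sigma$.

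Finally, I would apply the theorem of invariants of Tutte, as formulated in~\cite[Thm.~14]{bernardi-mbm-alg} (or its direct analogue): any two invariants are polynomially related, and since $\Phi$ is polynomial of degree $m$ in $I(w,y)$ after collecting, one concludes that $\Phi(y)=\sum_{r=0}^{m}C_r(w)\,I(w,y)^r$ with $C_r(w)\in\qs(q,\nu)[[w]]$. Uniqueness of the $C_r(w)$ follows because the $m+1$ powers $I(w,y)^r$ are linearly independent as Laurent series in $y$ over $\qs(q,\nu)[[w]]$, which one can see by examining their leading behaviour as $y\to 0$ (using $I(w,y)\sim 1/y^2$).
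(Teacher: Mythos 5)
Your proposal essentially re-derives the content of Corollary~12 and Lemma~19 of \cite{bernardi-mbm-alg}, which the paper simply cites. That is legitimate in principle, but it misses the one genuinely new point that the paper's proof has to establish: namely, that the series $C_r$ are power series in $w$ and not merely in $w^{1/3}$. Recall that the generating function for near-triangulations used here is related to the series of \cite{bernardi-mbm-alg} by $T(q,\nu,w;y)=w\,Q(q,\nu,w^{1/3};0,w^{1/3}y)$: the earlier paper works with the edge variable $t$, and its Corollary~12 and Lemma~19 produce an invariant equation whose coefficients $C_r$ live a priori in $\qs(q,\nu)[[t]]$, i.e.\ in $\qs(q,\nu)[[w^{1/3}]]$ after substitution. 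The paper's proof of the Proposition is devoted almost entirely to closing this gap: by tracking through the proof of \cite[Lemma~19]{bernardi-mbm-alg}, using that the series $K(ty)$ appearing there is a series in $t^3$, one shows that the auxiliary quantities $t^jS_{i,j}$ and $t^jT_{i,j}$ are also series in $t^3$, and hence so are the $C_r$. Your sketch simply asserts $C_r(w)\in\qs(q,\nu)[[w]]$ as an output of the theorem of invariants, but the invariant theorem doesn't by itself control the ring in which the $C_r$ land; if you set up the involution and the quadratic change of variables as in \cite{bernardi-mbm-alg}, the natural coefficient ring is $\qs(q,\nu)[[t]]$, and the $w$-integrality requires the additional argument.

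A secondary remark: the steps you describe — deriving the one-catalytic-variable form from the two-variable functional equation, constructing the invariant $I(w,y)$, checking that $\Phi$ is $\sigma$-invariant, and invoking the theorem of invariants — are precisely the hard content of \cite{bernardi-mbm-alg}, and compress a substantial amount of that paper into two sentences. If you intend to re-derive those results rather than cite them, the write-up would need to be considerably more explicit, especially as the involution and invariants for near-triangulations differ in detail from those for general planar maps. As it stands, the structure of your argument is sound but the essential new step of the present proof — the $t\mapsto t^3=w$ reparametrization — is absent.
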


\begin{proof}
This follows from Corollary~12 and Lemma~19 of~\cite{bernardi-mbm-alg}.
One must prove that the series in $t$ denoted $C_r$ in~\cite{bernardi-mbm-alg}
(which coincide with the series $C_r$ of the present paper, with
$t=w^{1/3}$), are not only series in $t$ but in fact series in
$t^3$. This can be done by following carefully the proof of Lemma~19
of~\cite{bernardi-mbm-alg}, using the fact that the series denoted
$K(ty)$ is a series in $t^3$. This implies that the series denoted
$t^jS_{i,j}$ and $t^jT_{i,j}$ are also series in $t^3$, and one
concludes using Eq.~(93)
of~\cite{bernardi-mbm-alg}.
\end{proof}

By expanding the invariant equation~\eqref{eq:invT} near $y=0$, we can
express the series $C_r$ in terms of the derivatives of $\Q$ with
respect to $y$, evaluated at $y=0$. We will need the following
expressions of $C_m, \ldots, C_{m-3}$. The first three are explicit in
terms of $q, \nu$ and $w$, but the last one involves the series
$
\Q'_y(0)\equiv \Q_1$.

\begin{Lemma}
 \label{lem:initial-CrT}
Denote $\beta=\nu-1$,
and recall that $q=2+2\cos(2k\pi/m)$.
We have:
\begin{align*}
\frac{(-1)^k \,C _{m}}{(q\nu^2 )^{m/2}} &=1,
\\
\frac{(-1)^k \, C _{m-1}}{(q\nu^2 )^{m/2-1}}
&=\frac{m\beta }{2}\left(4\be+q+\be m(q-4)\right),
\\
 \frac{(-1)^k\,C _{m-2}}{(q\nu^2 )^{m/2-2}}
&=\frac {m\beta } {24} \bigg( 12q^2\nu^3w(q-4)+48 \nu^2\be(m-1)
  +\be(m-1)(q-4) \times
\\
&\ \ \ \ \ \ \ \Big( 6q+12  +6\be (mq+4) + \be^2(m^2q-4m^2+mq+20m-12)\Big)\bigg),
\\
 \frac{(-1)^k\, C _{m-3}}{(q\nu^2 )^{m/2-3}} &=-\frac 1 2 \,{m}^{2}{q}^{3}{\nu}^{4}
\beta^2  \left( q-4 \right)
\Q_1
\\
&
+ \frac{m(m-1)\beta^2}{720}\bigg(   180q^2\nu^3w(q-4)\Big((q-4)\be m +8\be
  +2q\Big)
+960\be \nu ^3(m-2)  \\
& +720\be \nu^2(m-2)(q-4)(\be m -\be +2)
+\be (m-2) (q-4)^2 \Big(\be^3m^3(q-4) \\
&+3\be^2m^2(\be q+16\be +5q) +\be
m(2\be^2 q -8\be^2+15\be q+360 \be +60 q+180)\\
& -180\be^2+360\be +60 q+300\Big)\bigg),
%
\end{align*}
where $
\Q_1$ is the Potts
generating function for near-triangulations with outer degree $1$.
\end{Lemma}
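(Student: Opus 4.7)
The plan is to follow the proof of Lemma~\ref{lem:initial-Cr} \emph{mutatis mutandis}, but expanding the invariant equation~\eqref{eq:invT} around $y=0$ rather than $y=1$. Since $I(w,y)=yq\Q(y)-1/y+1/y^2$ has a double pole at $y=0$, I would first rescale by introducing the regular quantities $\hat I(w,y):=y^2 I$, $\hat N(y):=y^2\bN(y,I)$ and $\hat D(w,y):=y^4\bD(w,I)$, whose values at $y=0$ are $1$, $q\nu$ and $q\nu^2$ respectively. Since $\hat N/\sqrt{\hat D}=\bN/\sqrt{\bD}$, the invariant equation rewrites as
\begin{equation*}
\hat D^{m/2}\,\Tch_m\!\left(\frac{\hat N}{2\sqrt{\hat D}}\right)=\sum_{r=0}^{m}C_r(w)\,y^{2(m-r)}\hat I^r.
\end{equation*}
At $y=0$ the argument of $\Tch_m$ equals $\sqrt q/2=\cos(k\pi/m)$; there $\Tch_m$ takes the value $\cos(k\pi)=(-1)^k$, while $\Tch_m'$ vanishes since $\sin(k\pi)=0$. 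Because $C_{m-j}$ enters the right-hand side with leading factor $y^{2j}$, extracting the coefficients of $y^0$, $y^2$, $y^4$ and $y^6$ successively determines $C_m$, $C_{m-1}$, $C_{m-2}$ and $C_{m-3}$.

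To carry out the extraction, I would use the expansion $\hat I=1-y+qwy^3+q\Q_1 y^4+q\Q_2 y^5+q\Q_3 y^6+O(y^7)$, together with the identity $\Q_1=\nu \Q_2$ to eliminate $\Q_2$ in favour of $\Q_1$, and similarly compute the Taylor coefficients of $\hat N$ and $\hat D$ from their defining formulas. Higher derivatives of $\Tch_m$ at $u_0=\cos(k\pi/m)$ are reduced to $\Tch_m(u_0)=(-1)^k$ by repeated use of the Chebyshev ODE
\begin{equation*}
(1-u^2)\Tch_m''(u)-u\Tch_m'(u)+m^2\Tch_m(u)=0,
\end{equation*}
which gives for instance $\Tch_m''(u_0)=4m^2(-1)^k/(q-4)$. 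The resulting $(q-4)$-denominators cancel against the explicit $(q-4)$ factors inherited from $\bD$ and $\bN$, leaving polynomial expressions in $w,q,\nu,m$. The $\Q_1$-dependence of $C_{m-3}$ originates entirely from the coefficient $\hat I_4=q\Q_1$, which propagates into the $y^6$ coefficient of both sides.

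A subtle point to check is that $\Q_3$ drops out of the $C_{m-3}$ equation, even though it appears in $\hat I_6=q\Q_3$ and thus enters both sides at order $y^6$. This cancellation happens at leading order: on the right, the $\Q_3$-contribution is $m C_m \hat I_0^{m-1}\hat I_6 = mq(-1)^k(q\nu^2)^{m/2}\Q_3$; on the left, one finds that the $\Q_3$-coefficients of $\hat N_6/(2\sqrt{\hat D_0})$ and of $-\hat N_0\hat D_6/(4\hat D_0^{3/2})$ cancel exactly in the argument of $\Tch_m$, while the factor $\hat D^{m/2}$ contributes $(m/2)\hat D_0^{m/2-1}\hat D_6$, which upon extracting the $\Q_3$-part yields $mq(q\nu^2)^{m/2}$; multiplying by $\Tch_m(u_0)=(-1)^k$ recovers exactly the same quantity.

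The main obstacle is simply the bulk of the algebra for the $y^6$ extraction: one composes a sixth-order Taylor expansion of $\Tch_m\circ(\hat N/(2\sqrt{\hat D}))$ with that of $\hat D^{m/2}$, and equates with the right-hand side after substituting the previously determined $C_m$, $C_{m-1}$ and $C_{m-2}$. I would perform this computation in the accompanying Maple session and record only the simplified final expressions in the statement. The odd-order coefficients (at $y^1,y^3,y^5$) provide consistency checks rather than new equations for the $C_{m-j}$.
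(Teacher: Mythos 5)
Your proposal is correct and follows essentially the same route as the paper's own proof: multiply the invariant equation~\eqref{eq:invT} by $y^{2m}$ (equivalently, work with $\hat I=y^2 I$, $\hat N=y^2\bN$, $\hat D=y^4\bD$) and expand around $y=0$, extracting coefficients of $y^0,y^2,y^4,y^6$, using $\Tch_m(\sqrt q/2)=(-1)^k$, $\Tch'_m(\sqrt q/2)=0$ and the Chebyshev ODE to reduce the higher derivatives. The extra details you supply — the explicit Taylor expansion of $\hat I$, the substitution $\Q_2=\Q_1/\nu$, and the verification that $\Q_3$ cancels at order $y^6$ — are all accurate and merely make explicit what the paper's proof leaves terse.
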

\noindent{\bf Remark.} When comparing this lemma with its
counterpart for general planar maps, Lemma~\ref{lem:initial-Cr}, we observe
that we have no  term $\Tch_m(\gamma)$ nor $\Tch_m'(\gamma)$ here. This is because
$$
\gamma= \lim_{y\rightarrow 0} \frac{N(y,I(w,y))}{2\sqrt{D(w,I(w,y))}}=
\frac {\sqrt q}2=\cos(k\pi/m),$$
so that $\Tch_m(\gamma)=(-1)^k$ and $\Tch'_m(\gamma)=0$.
\begin{proof}
 We multiply the invariant equation~\eqref{eq:invT} by $y^{2m}$,
so that it
 becomes not singular at $y=0$, and expand it around $y=0$. We
 extract successively the coefficients of $y^0,y^2,y^4$ and $y^6$
 and obtain in this way expressions of $C_m, \ldots, C_{m-3}$ (the
 coefficients of odd powers of $y$ do not give more information). For
 instance, since
 \begin{align*}
 y^2 I(w,y)&= 1+O(y),\\
y^2N(y, I(w,y))&= q\nu +O(y),
\\
y^4 D(w,I(w,y))&= q\nu^2+O(y),
 \end{align*}
extracting the coefficient of $y^0$ gives
$$
q^{m/2}\nu^m \Tch_m(\sqrt q /2)= C_m,
$$
and the expression of $C_m$ follows since $\Tch_m(\sqrt q /2)=(-1)^k$. As we extract the
coefficients of higher powers of $y$, derivatives of $\Tch_m$, taken
at the point $\sqrt q/2=\cos(k\pi/m)$, occur. We can find explicit
 expressions for them in terms of $q$, $k$ and $m$ using $\Tch'_m(\sqrt
 q/2)=0$ and the differential equation
$$
(1-u^2)\Tch''_m(u)-u\Tch'_m(u)+m^2\Tch_m(u)=0.
$$
Similarly, derivatives of $y^2 I(w,y)$ with respect to $y$ occur, and
this is why the expression of $C_{m-3}$ involves the series $
\Q_1$.
\end{proof}

\subsection{Some special values of $\boldsymbol{y}$}
We now work out the counterpart of Lemma~\ref{lem:count}. We still
denote $n=\lfloor m/2\rfloor$.
\begin{Lemma}\label{lem:countT}
Let
$q=2+2\cos(2\pi/m)$, and denote $\be=\nu-1$.
There exist  $m-2$ distinct formal power series in $w$, denoted
$Y_1,\ldots, Y_{m-2}$, that have
constant term $1+O(\beta)$ as $\beta \rightarrow 0$
and satisfy
\begin{equation}\label{eqj-squareT}
\bN(Y,I(w,Y))^2 =4\cos(j\pi/m)^2\bD(w,I(w,Y)),
\end{equation}
for some $j \in \llbracket 1,n \rrbracket$. Their coefficients are
algebraic functions of $\beta$ over $\rs$.

Let us denote  $I_i(w):=I(w,Y_i)$. This is a \fps\ in $w$ with
coefficients in the algebraic closure of $\rs(\be)$, denoted by
$\overline{\rs(\be)}$. The $(m-2)$
series  $I_i$ are distinct, and for $1\le i\le m-2$,
$$
I_i(0)\not\in\{0, -1/4, -1/(2\nu)\},
\qquad D(w,I_i)\not = 0 , \qquad
\frac{\partial I}{\partial y}(w,Y_i) \not = 0.
$$
\end{Lemma}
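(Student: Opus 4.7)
The plan is to adapt the proof of Lemma~\ref{lem:count}, the main structural change being that we now look for power series in $w$ whose coefficients lie in $\overline{\rs(\be)}$ rather than in $\rs(\nu,w)$. This is forced by the fact that the constant term $Y_0$ of $Y(w)$ is no longer the nice value $1$ (as in Lemma~\ref{lem:count}), but an algebraic perturbation of it. For each $j\in\llbracket 1,n\rrbracket$, write $q_j=4\cos^2(j\pi/m)$ and set
\[
\Phi_j(w,Y) := \bN\bigl(Y,I(w,Y)\bigr)^2 - q_j\,\bD\bigl(w,I(w,Y)\bigr),
\]
and look for $Y(w)=Y_0 + wZ(w)$ solving $\Phi_j(w,Y)=0$, with $Y_0\in\overline{\rs(\be)}$ satisfying $Y_0=1+O(\be)$ as $\be\to 0$.

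The first step is to determine $Y_0$. At $w=0$ we have $I(0,y)=(1-y)/y^2$, so multiplying $\Phi_j(0,Y)$ by $Y^4$ clears the denominators and yields a polynomial equation of degree $4$ in $Y_0$ with coefficients in $\rs[\nu,\be]$. Substituting $Y_0=1+\be v + O(\be^2)$, a short expansion gives
\[
Y_0^4\,\Phi_j(0,Y_0) = \be^2\bigl[(2-qv)^2 - q_j(qv^2-qv+1)\bigr] + O(\be^3),
\]
and the bracket coincides exactly with the quadratic~\eqref{eq-quadratic} encountered in the proof of Lemma~\ref{lem:count}. The same enumeration of roots as there yields, across all $j$, precisely $m-2$ distinct real values $v_1,\ldots,v_{m-2}$, and therefore $m-2$ distinct algebraic constant terms $Y_{i,0}\in\overline{\rs(\be)}$; the case $m$ even is treated analogously, as in Lemma~\ref{lem:count}.

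Next, extend each $Y_{i,0}$ to a full series $Y_i(w)$ by the formal implicit function theorem. The key non-degeneracy $\partial_Y\Phi_j(0,Y_{i,0})\neq 0$ in $\overline{\rs(\be)}$ is obtained by a direct expansion whose leading $\be$-term is $\be\,P_j'(v_i)$, non-zero by simplicity of $v_i$ as a root of the above quadratic. Induction on $p\ge 1$ then determines the coefficient $[w^p]Y_i$ uniquely as a rational expression in the preceding coefficients and in $Y_{i,0}$, hence in $\overline{\rs(\be)}$.

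It remains to verify the auxiliary properties. The series $I_i(0)=(1-Y_{i,0})/Y_{i,0}^2 = -\be v_i + O(\be^2)$ are pairwise distinct because the $v_i$'s are. The three exclusions $I_i(0)\notin\{0,-1/4,-1/(2\nu)\}$ translate respectively into $Y_{i,0}\neq 1$, $Y_{i,0}\neq 2$, and $Y_{i,0}^2 - 2\nu(Y_{i,0}-1)\neq 0$, each ruled out by $Y_{i,0}=1+\be v_i+O(\be^2)$ together with $v_i\neq 0$. For $\bD(w,I_i)\neq 0$, one computes $\bD(0,I_i(0)) = \be^2(qv_i^2-qv_i+1)+O(\be^3)$, non-zero by a trigonometric simplification analogous to~\eqref{D-constant-term}. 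Finally, $\partial I/\partial y(0,Y_{i,0}) = (Y_{i,0}-2)/Y_{i,0}^3 \neq 0$, again since $Y_{i,0}\neq 2$. The main obstacle throughout is the $\be$-adic setup of the first step: because the perturbation at $w=0$ is singular in $\be$, one must work over $\overline{\rs(\be)}$ and keep careful track of leading orders in $\be$ both when applying the implicit function theorem and when checking the non-vanishing conditions above.
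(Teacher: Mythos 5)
Your proof is correct and follows essentially the same strategy as the paper: multiply the $y$-cancellation equation by $Y^4$, locate the constant term $Y_{i,0}$ as a Puiseux/power series in $\beta$ satisfying the same quadratic~\eqref{eq-quadratic} as in Lemma~\ref{lem:count}, lift to a full $w$-series by simplicity of the root, and read off the exclusions from $I_i(0)=-\beta v_i+O(\beta^2)$. Your use of the exact formula $I(0,y)=(1-y)/y^2$ (instead of the paper's first-order $O(w)$ expansions) and your rewriting of the three exclusions as explicit algebraic conditions on $Y_{i,0}$ are small simplifications in presentation, not a different route.
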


The key difference with Lemma~\ref{lem:count} is that the coefficients
of the series $Y_i$ do not
have their coefficients in $\rs(\beta)$, but in its algebraic closure. Let us
illustrate this by an example.

\medskip

\noindent{\bf Example: bicoloured triangulations.} We take $q=2$, that
is, $m=4$ (and $n=2$). The relevant values of $j$ are 1 and 2, and we
want to solve
$$
N(Y,I(w,Y))^2=2D(w,I(w,Y))
$$
for $j=1$, and
$$
N(Y,I(w,Y))=0
$$
for $j=2$. Using the definitions of $N$ and $D$, this reads, for
$j=1$,
$$
4 {Y}^{3} \left( 2 \beta Y-2 \beta+Y-2 \right)
T(Y) -4 {Y}^{3} \left( 1+\beta \right) w+ \left( Y-2 \right)
 \left( \beta {Y}^{2}-2 \beta Y+2 \beta-2 Y+2 \right)
=0,
$$
and for $j=2$,
$$
2 {Y}^{3} \left( 1+\beta \right)T(Y)+\beta+1-Y=0.
$$
Recall that we are interested in series $Y(w)$ with constant term
$1+O(\beta)$. We find one such solution for each of the above
equation, which satisfy
\begin{align*}
  Y_1(w)&= {\frac {\beta+1-\sqrt {1-{\beta}^{2}}}{\beta}}+O(w),
\\
Y_2(w)&=1+\beta+O(w).
\end{align*}
\qee

\begin{proof}
We denote
$q_j=4\cos(j\pi/m)^2=2+2\cos(2j\pi/m)$. Observe that $q=q_1$. We
assume that $m=2n+1$ is odd. The proof is similar in the even case.

\medskip
\noindent\paragraph{\bf Existence of the series $\boldsymbol{Y_i}$.}
We are looking for series $Y=Y(w)$ solutions of~\eqref{eqj-squareT}.
Multiplying~\eqref{eqj-squareT} by $Y^4$, we obtain the equation
$
\Phi_j(w,Y)=0,
$
where $$
\Phi_j(w,y)=y^4\bN(y,I(w,y))^2-q_jy^4\bD(w,I(w,y)).
$$
Observe that $y^2 I(w,y)$ is a \fps\ in $w$ with coefficients in
$\rs[\nu,y]$. Hence the same holds for $y^2\bN(y,I(w,y))$ and
$y^4\bD(w,I(w,y))$, and finally for $\Phi_j(w,y)$. Expanding $I$, $N$
and $D$ at first order in $w$ gives:
 \begin{align}
y^2I(w,y)&=1-y+O(w),\label{I-exp-T}
\\
 y^2\bN(y,I(w,y))&= q\nu-y(q+2\beta(q-2))+(q-2)\beta y^2+O(w),
\nonumber
\\
 y^4\bD(w,I(w,y))&=q\nu^2
-2\,q \nu ^{2}y
+\left(q+ 3q\beta+(q+4){\beta}^{2} \right)y^2
-\beta\left( 4\beta+q \right) y^3
+{\beta}^{2}y^4 +O(w).\label{D-exp-T}
 \end{align}
The equation $\Phi_j(w,Y)=0$ thus reads
\beq\label{eq-Phi-S-T}
P_j(Y)+wS_j(w,Y)=0,
\eeq
where
\begin{multline*}
  P_j(y)=\big( q\nu-y(q+2\beta(q-2))+(q-2)\beta y^2\big)^2\\
-q_j
\big( q\nu^2
-2\,q \nu ^{2}y
+\left(q+ 3q\beta+(q+4){\beta}^{2} \right)y^2
-\beta\left( 4\beta+q \right) y^3
+{\beta}^{2}y^4 \big)
\end{multline*}
and $S_j(w,y)$ is a power series in $w$ with coefficients in
$\rs[\nu,y]$. In particular, the coefficient of $w^0$ in $Y$, denoted
$y_0$, must satisfy $P_j(y_0)=0$.
The roots of this quartic polynomial can be seen as Puiseux series in
$\beta$ with coefficients in  $\cs$ (see for
instance~\cite[Ch.~6]{stanley-vol2}). Let us
  focus on the roots
that are finite at $\beta=0$ and have constant term 1. Using Newton's
polygon method, we find that they read $y_0=1+ \beta v+O(\beta^3)$,
where $v$ must satisfy  the equation~\eqref{eq-quadratic} that we  studied
when constructing the series $Y_i$ for general planar maps. Thus when
$j>1$ we
find for $P_j$  two distinct roots with constant term 1, of the form $y_0=1+\beta
v_j^\pm+O(\beta^3)$,
and only one such root $y_0=1+\beta
v_1^++O(\beta^3)$ when $j=1$ (with $v_1^+=1/q$).

Now having fixed one root $y_0$ of $P_j$, let us return to~\eqref{eq-Phi-S-T} and
extract from it the coefficient of $w^p$, for $p\ge 1$. This gives
$$
P'_j(y_0)[w^p]Y + \big(\hbox{expression involving only } [w^i]Y \hbox{ for }
i<p\big) =0.
$$
Since $y_0$ is not a double root of $P_j$, this allows us to compute
the coefficient of $w^p$ in $Y$ by induction on $p$, and thus
determines $Y$ completely. Moreover, $[w^p]Y$
is an algebraic function of $\beta$ over $\rs$.

As argued in the proof of Lemma~\ref{lem:count}, the $(m-2)$ values $v_1^+,
\ldots, v_n^+, v_2^-, \ldots, v_n^-$ are distinct. They give rise to
$(m-2)$ distinct series $Y(w)$ satisfying $Y(0)=1+O(\beta)$
and~\eqref{eqj-squareT}  for some $j\in \llbracket 1,n \rrbracket
$. We denote them $Y_1, \ldots, Y_{m-2}$ with $Y_i(0)=1+\beta
v_i +O(\beta^3)$,
 where as before,
$$
\{v_1, \ldots, v_{m-2}\}=\{v_1^+,
\ldots, v_n^+, v_2^-, \ldots, v_n^-\}.
$$
Clearly these series are non-zero, and we have proved the first
statement of the lemma.

\medskip
\paragraph{\bf Properties of $\boldsymbol{Y_i}$ and
  $\boldsymbol{I_i:=I(w,Y_i)}$.}
Let us prove that the series $I_i$ are distinct. Returning to~\eqref{I-exp-T}
gives
$$
I_i(0)=-\beta v_i+O(\beta^2).
$$
This proves that the series $I_i$ are distinct (since the $v_i$'s are
distinct), and also that $I_i(0)$ is distinct from $0, -1/4$ and
$-1/(2\nu)$.

Let us now prove that
$D(w,I_i(w))\neq 0$. Returning to~\eqref{D-exp-T}, we find that:
$$
D(0,I_i(0)) = \beta^2(qv_i^2-qv_i+1)+ O(\beta^3).
$$
Comparing with~\eqref{D-constant-term} shows that this is non-zero.

We finally check that
$$
\displaystyle \frac{\partial I}{\partial y}(0,Y_i(0))=-1+4\beta
v_i+O( \beta^2)
$$
is non-zero, as stated in the lemma.
\end{proof}

\subsection{Some polynomials with common roots}
We still assume that $q=2+2\cos(2\pi/m)$.
We  denote  as before:
$$
\bC(w,x)=\sum_{r=0}^m C_r(w)\, x^r.
$$
Thus
$C(w,x)$ is a  polynomial in $x$ with coefficients in
$\rs(\nu)[[w]]$. This is also true of $D(w,x)$
(see~\eqref{D-exprT}).
 The coefficients of
$D$ are explicit, but those of $C$ are unknown. Here is now the
counterpart of Lemma~\ref{lem:rootsCD}.
\begin{Lemma}\label{lem:rootsCDT}
Each of the series $I_i:=I(w,Y_i)$ defined by Lemma~{\rm\ref{lem:countT}} satisfies
\begin{align*}
\bC(w,I_i)^2&=\bD(w,I_i)^m,
\\
\bD(w,I_i) \frac{\partial \bC}{\partial x}(w,I_i)&= \frac m 2 \bC(w,I_i)
\frac{\partial \bD}{\partial x}(w,I_i),
\\
\bD(w,I_i) \frac{\partial \bC}{\partial w}(w,I_i)&= \frac m 2 \bC(w,I_i) \frac{\partial \bD}{\partial w}(w,I_i).
  \end{align*}
The first two identities imply
that $I_i$ is actually a double root of $C^2-D^m$.
\end{Lemma}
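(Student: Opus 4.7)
The plan is to mirror the proof of Lemma~\ref{lem:rootsCD} almost line by line, with the time variable $t$ replaced by the vertex variable $w$ and using the triangulation versions of the invariant equation~\eqref{eq:invT} and the special points $Y_i$ from Lemma~\ref{lem:countT}. Setting
$$G(w,y,x) = \frac{\bN(y,x)}{2\sqrt{\bD(w,x)}},$$
I first want to observe that $\sqrt{\bD(w,I_i)}$ is a well-defined power series in $w$ with coefficients in $\overline{\rs(\be)}$: its constant term is non-zero by Lemma~\ref{lem:countT}, so one can extract a square root. By construction of $Y_i$ via~\eqref{eqj-squareT}, we have $G(w,Y_i,I_i) = \pm\cos(j\pi/m)$ for some $j \in \llbracket 1,n\rrbracket$, and this lies in the set of roots of $\Tch'_m$ described at~\eqref{roots}. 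Thus $\Tch'_m(G(w,Y_i,I_i)) = 0$ and $\Tch_m(G(w,Y_i,I_i)) = \varepsilon$ for some $\varepsilon \in \{-1,+1\}$.

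For the first identity, I would specialize~\eqref{eq:invT} at $y = Y_i$, which gives
$$\bC(w,I_i) = \varepsilon \bD(w,I_i)^{m/2},$$
and squaring yields $\bC(w,I_i)^2 = \bD(w,I_i)^m$. For the second identity, I would differentiate~\eqref{eq:invT} with respect to $y$, obtaining a relation of the schematic form
$$\tfrac{m}{2} \bD^{m/2-1} \bD'_x I'_y \, \Tch_m(G) \;+\; \bD^{m/2}\!\left(G'_y + G'_x I'_y\right) \Tch'_m(G) \;=\; \bC'_x I'_y.$$
Evaluating at $y = Y_i$, the $\Tch'_m(G)$ term vanishes, and since Lemma~\ref{lem:countT} guarantees $I'_y(w,Y_i) \neq 0$, one can divide through to get
$$\tfrac{m}{2}\varepsilon \bD(w,I_i)^{m/2-1} \bD'_x(w,I_i) = \bC'_x(w,I_i);$$
multiplying by $\bD(w,I_i)$ and substituting from the first identity gives $\bD \bC'_x = (m/2)\bC \bD'_x$ at $x = I_i$.

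For the third identity, I would carry out the same differentiation but with respect to $w$ rather than $y$: differentiating the invariant equation, evaluating at $y = Y_i$, using $\Tch'_m(G) = 0$ once more, and using the expression for $\bC'_x(w,I_i)$ just obtained (to cancel the cross-term $\bC'_x I'_w$), one arrives at $\tfrac{m}{2}\varepsilon \bD^{m/2-1} \bD'_w = \bC'_w$ at $x = I_i$, and multiplication by $\bD$ yields the desired identity. The double-root assertion then follows immediately: since $\bD(w,I_i) \neq 0$, identity~1 shows $I_i$ is a root of $\bC^2 - \bD^m$, and identity~2 shows the derivative $2\bC \bC'_x - m\bD^{m-1}\bD'_x$ also vanishes at $I_i$.

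I do not foresee any real obstacle, as the proof is structurally parallel to Lemma~\ref{lem:rootsCD}. The only minor departures are that we now work in $\overline{\rs(\be)}[[w]]$ rather than $\rs(\nu,w)[[t]]$, and that the role of the ``catalytic variable'' base point has shifted from $y = 1$ (where $G \to \gamma$) to $y = 0$ (where $G \to \sqrt{q}/2$); neither affects the purely formal manipulation above, because the only facts used about $I_i$ are $\bD(w,I_i) \neq 0$ and $\partial_y I(w,Y_i) \neq 0$, both supplied by Lemma~\ref{lem:countT}.
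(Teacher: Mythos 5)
Your proof is correct and follows exactly the approach the paper itself uses: the paper's proof of this lemma is precisely the one-line statement that ``the proof is identical to that of Lemma~\ref{lem:rootsCD}, with the variable $t$ replaced by $w$.'' You have simply spelled out that translation, correctly noting that the only inputs needed from Lemma~\ref{lem:countT} are $\bD(w,I_i)\neq 0$ and $\partial_y I(w,Y_i)\neq 0$, and that square roots in $\overline{\rs(\be)}[[w]]$ pose no problem since the base field is algebraically closed.
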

The proof is identical to that of Lemma~\ref{lem:rootsCD}, with the
variable $t$ replaced by $w$.

\begin{Proposition}\label{prop:factorT}
 Let $I_i:=I(w,Y_i)$ be the $(m-2)$
series defined in  Lemma~{\rm\ref{lem:countT}}.
 There exists a  triple $(\hP(w,x), \hQ(w,x),\hR(w,x))$ of
 polynomials in $x$ with coefficients in $\overline{\rs(\be)}[[w]]$, having
 degree at most $3$, $2$ and $1$ respectively in $x$, such that
\begin{align*}
\bC(w,x)^2-\bD(w,x)^m&= \displaystyle \hP(w,x) \prod_{i=1}^{m-2}(x-I_i)^2,
\\
\bD(w,x)\bC'_x(w,x)- \frac m 2 \bD'_x(w,x)\bC(w,x)&= \displaystyle \hQ(w,x) \prod_{i=1}^{m-2}(x-I_i),
\\
\bD(w,x)\bC'_w(w,x)- \frac m 2 \bD'_w(w,x)\bC(w,x)&= \displaystyle \hR(w,x) \prod_{i=1}^{m-2}(x-I_i).
 \end{align*}
\end{Proposition}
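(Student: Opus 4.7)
The plan is to mirror the proof of Proposition~\ref{prop:factor}, adapting the degree counts and leading-coefficient checks to the triangulation setting. By Lemma~\ref{lem:rootsCDT} each $I_i$ is a common root of the three expressions on the left-hand sides---a double root of the first and a simple root of the other two. Since by Lemma~\ref{lem:countT} the $(m-2)$ series $I_i$ are pairwise distinct elements of $\overline{\rs(\be)}[[w]]$ with nonzero constant term, each product $\prod_i(x-I_i)^{s_i}$ divides the corresponding polynomial in $\overline{\rs(\be)}(w)[x]$; let $\hP$, $\hQ$, $\hR$ denote the resulting quotients.

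The first task is to bound the degrees in $x$ using Lemma~\ref{lem:initial-CrT}. For $\hP$, both $\bC^2$ and $\bD^m$ have leading coefficient $(q\nu^2)^m$, since $C_m=(-1)^k(q\nu^2)^{m/2}$, so $\bC^2-\bD^m$ has degree at most $2m-1$ in $x$ and $\hP$ has degree at most $3$. For $\hQ$, the $x^{m+1}$ terms of $\bD\bC'_x$ and $\tfrac{m}{2}\bD'_x\bC$ both equal $mq\nu^2 C_m\,x^{m+1}$, hence cancel, giving $\hQ$ of degree at most $2$. For $\hR$, I first observe that $\bD'_w=q\beta\nu(q-4)$ is constant in $x$, and that $C_m$ and $C_{m-1}$ are independent of $w$ by Lemma~\ref{lem:initial-CrT}, so $\bC'_w$ has degree at most $m-2$ and the a priori degree of $\bD\bC'_w-\tfrac{m}{2}\bD'_w\bC$ is $m$. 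A short calculation using the explicit $w$-term $12q^2\nu^3w(q-4)$ appearing in $C_{m-2}$ shows that the $x^m$-coefficient of each summand equals $(-1)^k\tfrac{m\beta q\nu(q-4)}{2}(q\nu^2)^{m/2}$, so they cancel and $\hR$ has degree at most $1$.

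Next I would verify that the coefficients of $\hP$, $\hQ$, $\hR$ lie in $\overline{\rs(\be)}[[w]]$. Since each $I_i$ belongs to $\overline{\rs(\be)}[[w]]$ with nonzero constant term (Lemma~\ref{lem:countT}), it is invertible in this ring and each factor $(1-xI_i^{-1})^{-1}$ expands as a formal power series in $x$ and $w$ over $\overline{\rs(\be)}$. Plugging into
\[
\hP(w,x)=\bigl(\bC(w,x)^2-\bD(w,x)^m\bigr)\prod_{i=1}^{m-2}\frac{1}{I_i^{2}\,(1-xI_i^{-1})^{2}},
\]
and into the obvious analogues for $\hQ$ and $\hR$, one recovers the claimed coefficient ring.

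The main obstacle, and the only genuine departure from the proof of Proposition~\ref{prop:factor}, is the sharper degree bound for $\hR$. In the general-map setting the quotient $\hR$ was only shown to have degree at most $2$, obtained by a mere a priori count; here the required bound is $1$, which forces the additional top-order cancellation described above and requires the precise $w$-dependence of $C_{m-2}$, not just the $w$-independence of $C_m$ and $C_{m-1}$. All other steps are straightforward transcriptions of the arguments used for general planar maps, with the size variable $t$ replaced by the vertex variable $w$.
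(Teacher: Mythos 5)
Your proposal is correct and takes essentially the same approach as the paper: the paper also reduces to Proposition~\ref{prop:factor} (with $t$ replaced by $w$) and then points out exactly the two degree drops you identified, namely that $\bC^2-\bD^m$ has degree at most $2m-1$ (since $C_m^2=(q\nu^2)^m$ matches the leading coefficient of $\bD^m$) and that $\bD\bC'_w-\tfrac m2\bD'_w\bC$ has degree at most $m-1$ (with the $x^m$-coefficient you computed). One small imprecision: you call the $\hR$ drop the \emph{only} genuine departure, but the $\hP$ drop from degree $4$ to $3$ is also new to the triangulation setting — it just happens to require only the value of $C_m$ rather than the $w$-dependence of $C_{m-2}$.
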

\begin{proof}
The proof is almost the same as that of
Proposition~\ref{prop:factor}, with the variable $t$ replaced by~$w$.
The only difference is that here the polynomial
$\bC(w,x)^2-\bD(w,x)^m$ has degree at most $2m-1$ (instead of $2m$), and the polynomial
$\bD(w,x)\bC'_w(w,x)- \frac m 2 \bD'_w(w,x)\bC(w,x)$ has degree at
most $m-1$ (instead of $m$). This is because  the coefficient of $x^{2m}$ in
$\bC(w,x)^2$ and $\bD(w,x)^m$ is  $q^m\nu^{2m}$ (by
Lemma~\ref{lem:initial-CrT}), and  the
coefficient of  $x^{m}$ in  $\bD(w,x)\bC'_w(w,x)$ and $\frac m 2
\bD'_w(w,x)\bC(w,x)$ is
$\displaystyle -\frac{m}{2}\beta(q-4) q^{m/2+1}\nu^{m+1}$ (again, by Lemma~\ref{lem:initial-CrT}).
\end{proof}

\subsection{Differential system}

We still assume that
$q=2+2\cos(2\pi/m)$.

\medskip
\noindent
{\bf A differential equation relating {\boldmath$\hP, \hQ$} and {\boldmath$\hR$}.}
Starting from Proposition~\ref{prop:factorT}, one first  proves that
$$
\frac{1}{\hQ} \frac{\partial}{\partial w} \left(\frac{ \hQ^2}{\hP\bD^2}\right)
=\frac{1}{\hR} \frac{\partial}{\partial x}
\left(\frac{\hR^2}{\hP\bD^2}\right).
$$
This argument is the same as  in
Section~\ref{sec:diff-general}.
The above
equation coincides with~\eqref{deT}, but with hats over the letters
$P, Q$ and $R$.  The series $\bP, \bQ$ and $\bR$ occurring in Theorem~\ref{thm:ED-Triang} will  be normalizations of $\hP, \hQ$ and $\hR$ by
multiplicative constants, independent from $w$ and $x$.

\medskip
\noindent {\bf The leading coefficients of {\boldmath$\hP,\hQ,\hR$}.}
Proposition~\ref{prop:factorT} gives
\begin{align*}
\displaystyle [x^3]\hP(w,x)&=  [x^{2m-1}]\left(\bC(w,x)^2-\bD(w,x)^m\right),\\
\displaystyle [x^2]\hQ(w,x)&= [x^m]\left(\bD(w,x)\bC'_x(w,x)- \frac m 2 \bD'_x(w,x)\bC(w,x)\right),
\\
\displaystyle [x^1]\hR(w,x)&=  [x^{m-1}]\left(\bD(w,x)\bC'_w(w,x)- \frac m 2 \bD'_w(w,x)\bC(w,x)\right).
 \end{align*}
Recall that $\bD$ is given explicitly by~\eqref{D-exprT}, and that
Lemma~\ref{lem:initial-CrT} gives the leading coefficients of
$C(w,x)$. This allows to us determine the leading coefficients of
$\hP,\hQ,\hR$:
\beq\begin{cases}\label{tCPQRT}
 \hP_3:=[x^3] \hP(w,x)=  m^2\,\beta^2 (q-4) q^{m-1}\nu^{2m-2},
\\
\hQ_2:=[x^2]\hQ(w,x)=\displaystyle \frac{m^2\,\beta^2}{2} (q-4) q^{m/2}\nu^{m},
\\
\hR_1:=[x^1]\hR(w,x)= \displaystyle  \frac{m^2\,\beta^2}{2} (q-4) q^{m/2}\nu^{m}
\left(  q T_1'
+ \frac {q(\beta-1)-8\beta}{2\nu}\right).
\end{cases}\eeq
Let us define
\beq\label{normalT}
\bP(w,x)=\frac{\hP(w,x)}{\hP_3}, \quad
\bQ(w,x)=\frac{2\nu \hQ(w,x)}{\hQ_2} \quad \hbox{ and } \quad
\bR(w,x)=\frac{2\nu \hR(w,x)}{\hQ_2}.
 \eeq
Then $\bP$, $\bQ$, $\bR$ satisfy~\eqref{deT} and  moreover,
\beq\label{P3Q2T}
[x^3]\bP(w,x)=1, \qquad [x^2]\bQ(w,x)=2\nu, \qquad
[x^1]\bR(w,x)=  2\nu q T_1'+ {q(\beta-1)-8\beta}.
\eeq
In particular, the first  of the initial
conditions~\eqref{init-T-thm} holds. (We do not give $Q_2$
explicitly in the statement of the theorem, as its value follows from
the differential system and the initial conditions.)
\medskip

\noindent
{\bf The case {\boldmath$w=0$}.}
It remains to determine the values $\bP(0,x)$ and
$\bQ(0,x)$. Proposition~\ref{prop:factorT}
gives
\begin{align}
\bC(0,x)^2-\bD(0,x)^m&= \displaystyle \hP(0,x) \prod_{i=1}^{m-2}(x-I_i(0))^2,\label{e1t0T}
\\
\bD(0,x)\bC'_x(0,x)- \frac m 2 \bD'_x(0,x)\bC(0,x)&= \displaystyle \hQ(0,x) \prod_{i=1}^{m-2}(x-I_i(0)).\label{e2t0T}
 \end{align}
We will now combine our knowledge of $\bC(0,x)$ obtained
in~\cite{bernardi-mbm-alg} with the properties of the series $I_i(w)$
gathered in Lemma~\ref{lem:countT} to determine $\hP(0,x)$ and $\hQ(0,x)$.

The series $\bC(0,x)$ can be determined using results
in~\cite[Lemma~19]{bernardi-mbm-alg}
(with $t=w^{1/3}$).
 There, it is proved that
$$
\bC(0,x^2-x)\equiv \sum_{r=0}^m C_r(0)(x^2-x)^r=\hD^{m/2}\,
\Tch_m\left(\frac{\hN}{2\sqrt{\hD}}\right),
$$
where
$$
\hN=\bN(1/x,x^2-x)=\beta(4-q)x+q \nu (x^2-x) +\beta(q-2)
$$
and
$$
\hD=\bD(0,x^2-x)=q(1+\beta)^2(x^2-x)^2+\beta(4\beta +q)(x^2-x)+\beta^2.
$$
Recall the expression~\eqref{T2m1} of $\Tch_m(u)^2-1$. Thus \eqref{C0-explicitT} gives
\begin{align}
\bC(0,x^2-x)^2-\bD(0,x^2-x)^m
&=  \frac 1 4 (\hN^2-4\hD)
\left(\hN^2\right)^{ \mathbbm{1}_{m=2n}} \prod_{j=1}^{ \lfloor {\frac{m-1}2}\rfloor}
 \left( \hN^2-4\hD\cos^2 \frac{j\pi} m\right)^2 \label{fact1T}\\
&=\hP(0,x^2-x)\prod_{i=1}^{m-2}\left(x^2-x-I_i(0)\right)^2\nonumber
\end{align}
by~\eqref{e1t0T}.
We want to prove that $x^2(x+1/4)$ is a factor of $\hP(0,x)$, or
equivalently, that $x^2(x-1)^2(x-1/2)^2$ is a factor of
$\hP(0,x^2-x)$.
Since by Lemma~\ref{lem:countT}, $I_i(0)\not \in\{ 0, -1/4\}$, it suffices to prove
that $x^2(x-1)^2(x-1/2)^2$ divides the right-hand side
of~\eqref{fact1T}. Using the above values of $\hN$ and $\hD$, we first find
a factor $(x-1)^2$ in  $(\hN^2-4\hD)$. Then, using
  $4\cos^2 ({\pi}/ m)=q$, we find that the term obtained for $j=1$
  contains a factor $(x-1/2)^2$. Finally,   using $4\cos^2 (2{\pi}/ m)=(q-2)^2$, we find that the term obtained for $j=2$
  contains a factor $x^2$,
provided  $m\ge 5$. When
  $m=3$, that is, $q=1$, a factor $x^2$ is found in the term obtained
  for $j=1$.  When $m=4$, that is $q=2$, a factor $x^2$ comes out from
  the term $\hN^2$.

\medskip
We now wish to prove that $x(2\nu x+1)$ divides  $\hQ(0,x)$. We have
just seen that $x=0$ is a double root of $\hP(0,x)$. Using the same
argument as in Section~\ref{sec:diff-general},
this implies that $x$
divides $\hQ(0,x)$. For the remaining factor, we have  to prove that
$(2\nu(x^2-x)+1)$ divides $\hQ(0,x^2-x)$.
Let us combine~\eqref{e2t0T} (taken at $x^2-x$)
with~\eqref{C0-explicitT}. We obtain:
\begin{align*}
\hQ(0,x^2-x)\prod_{i=1}^{m-2}\left(x^2-x-I_i(0)\right)&=\bD(0,x^2-x)\bC'_x(0,x^2-x)- \frac m 2 \bD'_x(0,x^2-x)\bC(0,x^2-x)\\
&=\frac{\hD^{1+m/2}}{2x-1} \frac {\partial} {\partial x}\left( \frac{\hN}{2\sqrt{\hD}}\right)
\Tch'_m\left(\frac{\hN}{2\sqrt{\hD}}\right).
\end{align*}
A factor  $(2\nu(x^2-x)+1)$ is found in
$\frac {\partial} {\partial x}\left( \frac{\hN}{2\sqrt{\hD}}\right)$.
 Lemma~\ref{lem:countT} ensures that $I_i(0)\neq -1/(2\nu$), and thus
 this must be a factor of $\hQ$.

\subsection{The Potts \gf\ of triangulations}
It follows from the first two identities of Proposition~\ref{prop:factorT} that
$$
\hP\left( \bD\bC'_x- \frac m 2 \bD'_x\bC\right)^2
= \hQ^2\left(\bC^2-\bD^m\right).
$$
Using the normalisation~\eqref{normalT}, this gives
\beq\label{idT}
\bP\left(\bD\bC'_x- \frac m 2 \bD'_x\bC\right)^2
= \frac{qm^2\beta^2}4
(q/4-1) \bQ^2\left(\bC^2-\bD^m\right),
\eeq
since $\hQ_2^2= qm^2\beta^2(1+\beta)^2(q/4-1)\hP_3$ by \eqref{tCPQRT}.

Recall that the leading coefficients of $\bC$ are given in
Lemma~\ref{lem:initial-CrT}, and that $C^2-D^m$ has degree $2m-1$. Recall also the known values~\eqref{P3Q2T} of $P_3$ and $Q_2$.
Extracting the coefficients of $x^{2m+3}$, $x^{2m+2}$, and $x^{2m+1}$ in~\eqref{idT}
 first gives (for the coefficient of $x^{2m+3}$) a tautology, then (for the coefficient of $x^{2m+2}$) an interesting relation between $\bP_2$ and $\bQ_1$, namely
\beq\label{P2Q1-init}
4\nu \bP_2= 4 \bQ_1  +{\nu-4},
\eeq
and finally (for the coefficient of $x^{2m+1}$) and expression of
$T_1$ which we transform into~\eqref{Q1-PQt} thanks to~\eqref{P2Q1-init}.

The expression~\eqref{T1-alt} of $T_1'$ was obtained in~\eqref{P3Q2T}.

\subsection{Uniqueness of the solution}
\label{sec:uniqueT}
We now want to show that the differential system of Theorem~\ref{thm:ED-Triang},
together with its initial conditions, uniquely defines the nine series
$P_0$, $P_1$, $P_2$, $P_3$, $Q_0$, $Q_1$, $Q_2$, $R_0$, $R_1$, whether
$q$ is an indeterminate  or a real number of the form $2 + 2\cos
(2\pi/m)$. The proof parallels the case of general planar
maps given in Section~\ref{sec:unique-general} (with $t$ replaced by $w$), and is
actually a bit simpler. We merely sketch the main steps.

 The system can be written as in~\eqref{syst-line}:
\beq \label{syst-line-T}
2Q_w'PD-QP_w'D-2QPD_w'=2R_x'PD-RP_x'D-2RPD_x'.
\eeq
Both sides are
polynomials of degree at most 7 in $x$. Recall that $P_3=1$. Extracting the coefficient of
$x^7$ gives $Q_2'(w)=0$, which, with the initial condition
$Q_2(0)=2\nu$, implies $Q_2(w)=2\nu$. We are left with seven equations and
seven unknown series.

 We denote
$P_{i,j}=[w^i]P_j$, and similarly for $Q$ and $R$. Let
$\cC_i=(P_{i,0},P_{i,1},P_{i,2};Q_{i,0},Q_{i,1};R_{i-1,0}$, $R_{i-1,1})$.
The right-hand side of~\eqref{init-T-thm} gives
$$
\cC_0=(0,0,1/4;0,1;0,0).
$$
%
The next steps are the same as in Section~\ref{sec:unique-general}. For $i\ge 1$ and $j\in
\llbracket 0, 6\rrbracket$, we denote by $\Eq_{i,j}$ the
equation obtained by extracting the coefficient of $w^{i-1} x^j$ in~\eqref{syst-line-T}. Again, the equation $\Eq_{i,0}$  only
involves series from   $\cup_{s<i} \cC_i$ because $P_
{0,0}=P_{0,1}=Q_{0,0}$ as in the case of 
general  maps. However, we do
not have here the second difficulty that came from the factor $(x-1)$
in $P(0,x)$, $P'_x(0,x)$
 and $Q(0,x)$, and we consider the system
$$
\cS_i=\left\{
\Eq_{i+1,0},\Eq_{i,1},\Eq_{i,2},\ldots,\Eq_{i,6}\right\}.
$$
Solving the system $\cS_1$ for the unknowns of $\cC_1$ gives
$$
\cC_1=(-\be, {\be}^{2}+\be q/2-4\be,  4{\be}^{2}+2\be q-2q;
8\be+2q-2{\be}^{2}+\be q,  4{\be}^{3}+2q{\be}^{2}+4{\be}^{2}-2q;
-2\be, \be q-8\be-q).
$$
Then for $i>1$,  $\cS_i$ is a system of seven linear equations for the seven
unknowns in $\cC_i$, the  determinant of which is:
$$
{i}^{5}{q}^{3}{\be}^{7} \left( q-4 \right)  \left( \be+1 \right) ^{4}
 \left( \be-1 \right) ^{3} \left( 4\,{\be}^{2}-q \right)/2.
$$
We conclude as in  Section~\ref{sec:unique-general}.

\medskip
From this point on, we argue as in Section~\ref{sec:conclusion} to
prove that, for $q$ an indeterminate,  the series $P_j$, $Q_j$ and
$R_j$ are differentially algebraic and satisfy~\eqref{Q1-PQt}
and~\eqref{T1-alt}. It then follows from~\eqref{Q1-PQt} that  $T_1$
is  differentially algebraic too.

\subsection{About possible singularities}
It remains to prove  that  the coefficients of the series $P_j,Q_j,R_j$ are
polynomials in $q$ and $\nu$.

First, we note that~\eqref{T1-alt} implies that this is true for
$R_1$.
Consider the system of (only) six equations:
$$
\{\Eq_{i+1,0}, \Eq_{i,1}, \ldots, \Eq_{i,5}\}.
$$
 It relates polynomially the coefficients in $\cup_{s\le
  i} \cC_s$. For $i>1$, it is linear in $ P_{i,0}, P_{i,1}, P_{i,2}, Q_{i,0}$, $Q_{i,1}$,
$R_{i-1,0}$ once the values $P_{1,0}$ and $R_{0,0}$ are known, with determinant:
$$
- { i^5} q \be^9(\be-1)
\left(
  q^3-4q^2(\be^2-\be+1)+2q(\be-1)^2(\be^2-4\be+1)-16\be^2(\be-1)^2\right).
$$
 This determinant contains no factor $(q-4)$, $(\be +1)$ nor
$(4\be^2-q)$. By induction on $i$, we  conclude that the
denominators of the coefficients of $P_j$, $Q_j$ and $R_j$ consist of
factors $q, \be$ and $(\be-1)$.

To show that these factors do not occur, we use two identites that we will prove in Section~\ref{sec:simp-T}
(without assuming polynomiality of the coefficients...):
$$
 \nu q(q-4)Q_0-(4\beta+q)R_0+2(\nu qw(q-4)+\beta)R_1=
 2\beta(q-4)(\nu qw(q-4)+\beta)
$$
and
$$
\nu \beta q(q-4)Q_1-2\nu^2qR_0+\beta(4\beta+q)R_1=\ 2(q-4)\beta^2(4\beta+q).
$$
Let $\Eq'_{i}$ and $\Eq''_i$ be obtained by extracting the coefficient
of $w^i$ in these equations. Consider the system
$$
\{\Eq_{i+1,0}, \Eq_{i,1},\Eq_{i,2} , \Eq_{i,3}, \Eq_i', \Eq_i''\}.
$$
It relates polynomially the coefficients $P_{s,j}, Q_{s,j} $ and $R_
{s,j}$ for $s\le i$. For $i>1$, it is linear in $P_{i,0}, P_{i,1},
P_{i,2}, Q_{i,0}, Q_{i,1}, R_{i,0}$, with determinant
$$
-2 { i^4} q \be^{10}(\be+1)(q-4)(4\be+q).
$$
We have thus ruled out
factors $(\be -1)$. To rule out the factor $q$, we note that $\Eq_i''$
is a multiple of $q$ (because by~\eqref{T1-alt}, all coefficients $R_{i,1}$,
for $i>0$, are multiples of $q$). So if we replace  the last equation of the previous
system by $\Eq_i''/q$, the factor $q$ disappears
from the determinant.

We are left with the factor $\be$. This time we form the system
$$
\{\Eq_{i,3},\Eq_{i,4}, \Eq_{i,5}, \Eq_{i,6}, \Eq_i', \Eq_i''\},
$$
which is linear in the unknowns $P_{i,0}, P_{i,1}, P_{i,2}, Q_{i,0}, Q_{i,1},
R_{i,0}$, with determinant
$$
-2i^4q^5(q-4)(\be-1)(\be+1)^{10}(4\be^2-q).
$$
There is no factor $\be$, and we conclude that the coefficients of $P_j, Q_j$ and $R_j$ belong to $\qs[q,\nu]$.
\section{Simplifying the  system: five non-differential equations}
\label{sec:non-diff}
In this section, we derive 
five
equations between the unknown series $P_j$, $Q_j$ and $R_j$, which do
not involve their derivatives (however, one of them
involves the derivative of $M_1$ or $T_1$, depending on the family of
maps we consider).

Since the system obtained for triangulations is a bit simpler than for
general maps (the degrees of $P$ and $R$ being smaller), we begin with
this case.

\subsection{Triangulations}
\label{sec:simp-T}
We start from the system of Theorem~\ref{thm:ED-Triang}. It involves nine series in
$t$ denoted $P_0, \ldots, P_3$, $Q_0, \ldots, Q_2$, $R_0, R_1$. One
of them is given explicitly in the theorem: $P_3=1$. Moreover, we have
derived from the system in Section~\ref{sec:uniqueT} that
$Q_2=2\nu$. This leaves us with seven unknown series, related by seven
differential equations. These equations are the coefficients of $x^0,
\ldots, x^6$ in~\eqref{syst-line-T}.

We are going to derive three linear (non-differential) relations
between the unknown series by letting $x$ tend to $\infty$ or to
one of the two roots (in $x$) of $D(w,x)$.
Extracting the coefficient of $x^6$ from~\eqref{syst-line-T}  gives
$$
Q'_1=\nu P'_2.
$$
Using the initial conditions~\eqref{init-T-thm}, we obtain
\beq \label{P2Q1-T}
\nu P_2=Q_1+\nu/4-1.
\eeq
 This identity was  already obtained by
expanding~\eqref{idT} in the proof of Theorem~\ref{thm:ED-Triang} (see~\eqref{P2Q1-init}).

\medskip
Let us now specialize~\eqref{syst-line-T} at
 the two roots $\delta_1(w)$ and $\delta_2(w)$ of $D(w,x)$. These
roots, seen as series in $w$, satisfy:
$$
\delta_{1,2}(w)= \beta
\frac{\pm\sqrt{(4-q)(4\beta^2-q)}-4\beta-q}{2q\nu^2}+O(w),
$$
with $\beta=\nu-1$ as above. Since $P(0,x)=x^2(x+1/4)$,  the
$\delta_i$'s are not roots of $P$, and thus taking the limit
$x\rightarrow \delta_i$ in~\eqref{syst-line-T} gives,
for $i=1,2$,
$$
Q(w,\delta_i(w))D'_w(w,\delta_i(w))= R(w,\delta_i(w))D'_x(w,\delta_i(w)).
$$
Since $D$ and its roots are explicit, this is a linear system in
$Q_0$, $Q_1$, $R_0$ and $R_1$, symmetric in $\delta_1$ and $\delta_2$. Solving for $Q_0$ and $Q_1$ gives
\beq
 \nu q(q-4)Q_0-(4\beta+q)R_0+2(\nu qw(q-4)+\beta)R_1=
 2\beta(q-4)(\nu qw(q-4)+\beta)
\label{Q0R-T}
\eeq
and
\beq
\nu
\beta q(q-4)Q_1-2\nu^2qR_0+\beta(4\beta+q)R_1=\ 2(q-4)\beta^2(4\beta+q).
\label{Q1R-T}
\eeq
Let us finally recall the two characterizations of $T_1$ obtained
in Theorem~\ref{thm:ED-Triang}:
\beq\label{Q1-PQt-bis}
20 {\nu}^{2}q\Q_1-4{\nu}^{2} { P_1} +4  \nu{Q_0}+
 \left( { Q_1}-1 \right)  \left( { Q_1}+\nu-3 \right)+2 \nu  \left( q\nu-24 \beta-6 q \right) w =0.
\eeq
and
\beq\label{Tp1-T}
2\nu q T_1' =R_1 - q(\beta-1)+8\beta.
 \eeq
We call Eqs.~(\ref{P2Q1-T}-\ref{Tp1-T})  \emm the five generic
non-differential equations,: they involve the derivative $\tT_1'$, but
no derivative of the other unknown series $P_j$, $Q_j$ and $R_j$.
And of course they hold for indeterminates $q$ and $\nu$.

\medskip\noindent{\bf Prediction of the order of  $\boldsymbol{T_1}$.}
Using (in this order), equations~\eqref{P2Q1-T}, \eqref{Tp1-T}, \eqref{Q1R-T}, \eqref{Q0R-T}
and \eqref{Q1-PQt-bis}, we obtain expressions of $Q_1$, $R_1$, $R_0$, $Q_0$ and finally
$P_1$ as linear combinations of $P_2$, $T_1$ and $T'_1$ (with
coefficients in $\qs(q,\nu,w)$). We only give the connection between
$Q_0$, $P_2$ and $T_1$, which we will use later:
\beq\label{T2prime}
2\left( 4\nu ^3q^2w +\beta(4\beta^2-q)\right)  \tT _1'/\nu+2q\nu
Q_0-\beta(q+4\beta)P_2 = (4\beta+q)(4q\nu w-\be/4).
\eeq
Recall that $T_1=\nu T_2$ where $T_2$ counts near-triangulations with outer degree 2.

We have thus got rid of five unknown
series, but introduced a new one, namely $T_1$. Reporting the
expressions of $Q_1, R_1, R_0, Q_0$ and $P_1$  in  our differential system gives a system
of three equations in  $P_0$, $P_2$ and $T_1$, of order 1 in
$P_0$ and  $P_2$ and order 2 in $T_1$. Thus the field generated over
$\qs(q,\nu,w)$ by $P_0, P_2, T_1$ and their
derivatives has transcendence degree at most 4, and we expect $T_1$ to
satisfy a differential equation of order at most 4. We have been able to eliminate
$P_0$ and to obtain two differential equations in $P_2$ and $T_1$: one
of order 1 in $P_2$ and 2 in $T_1$, the other of respective orders 2
and 3.  Going further with the elimination seems to require  heavy
computer algebra, and we have failed to obtain an explicit equation for
$T_1$. We study 
in Sections~\ref{sec:nu=0} to~\ref{sec:q=0} three
special cases, of combinatorial interest, where the order of $T_1$ is only 2.

\subsection{General planar maps}
\label{sec:simpl-gen}
We now proceed similarly with the differential system of
Theorem~\ref{thm:ED}. It involves eleven series in $t$ denoted $P_0, \ldots,
P_4$, $Q_0, \ldots, Q_2$, $R_0, \ldots, R_2$. Two of them are given
explicitly: $P_4=1$ and $R_2=\nu+1-w(q+2\beta)$. We have also seen in
Section~\ref{sec:unique-general} that $Q_2=1$. This leaves us with eight unknown series, related by a system
of eight differential equations.

As in the case of triangulations, we can derive three
linear (non-differential) relations between them by considering the system~\eqref{syst-line} as
$x$ approaches $\infty$, or one of the two roots (in $x$) of the
quadratic polynomial $D(t,x)$. The derivation is the same as in the
previous subsection, and we simply give the resulting three identities.

The first one was already obtained in the proof of Theorem~\ref{thm:ED} by
expanding~\eqref{id} in $x$ (see~\eqref{P3Q1-init}):
\beq\label{P3Q1}
\bP_3=2\bQ_1+4t(1+\nu)-4tw(2\beta+q).
\eeq
The other two are the counterparts of~\eqref{Q0R-T} and~\eqref{Q1R-T}, and they
read:
\begin{multline}
  \beta  \left( wq+\beta \right)  \left( q-4 \right) Q_0
+q \left( \beta+2 \right) R_0
+ 2\left(
 \beta  \left( q-4 \right)  \left( wq+\beta \right) t+ q \right)
R_1 =\\
2 \beta  \left( q-4 \right)  \left( wq-2
 \right)  \left( wq+\beta \right) t+2 q \left( wq-2 \right) ,
\label{Q0R-M}
\end{multline}
\begin{multline}
\beta  \left( wq+\beta \right)  \left( q-4 \right) Q_1
-2 \left(  {\beta}^{2}+ q\beta+ q \right) R_0
  -q \left( \beta+2 \right)R_1 = \\
2 \beta  \left( q-4 \right)  \left( 2 \beta w+wq-\beta-2 \right)
 \left( wq+\beta \right) t-2 q \left( \beta qw-2 \beta w+wq-\beta-
2 \right) .
\label{Q1R-M}
\end{multline}
Let us finally recall the two characterizations of $\tM_1=t^2M_1$ obtained
in Theorem~\ref{thm:ED}:
\begin{multline}\label{M11-PQ-encore}
12  \left( {\beta}^{2}+q\nu \right) \tM_1  +
 P_3 ^{2}/4
+2 t \left(1+\nu -w(2 \beta+q) \right)P_3  -P_2
 +2 Q_0
=4 t \left(1+  w(3 \beta +q) \right) ,
\end{multline}
and
\beq\label{Mt1-expr}
2\left( {\beta}^{2}+q\nu\right) \tM_1'
+ \left( 1+\nu- w\left( 2\beta+q \right)  \right) P_3 /2-R_1
=2+2 \beta w .
\eeq
Again, we call Eqs.~(\ref{P3Q1}-\ref{Mt1-expr})  \emm the five generic
non-differential equations,: they involve the derivative $\tM_1'$, but
no derivative of the other unknown series $P_j$, $Q_j$ and $R_j$.
And of course they hold when  $q$,
$\nu$ and $w$ are indeterminates.

\medskip
\noindent{\bf Prediction of the order of  $\boldsymbol{M_1}$.}
Using (in this order) the five
equations~\eqref{P3Q1},~\eqref{Mt1-expr},~\eqref{Q1R-M},
\eqref{Q0R-M}, and~\eqref{M11-PQ-encore},  we can express $Q_1$, $R_1$,
$R_0$, $Q_0$,
and finally $P_2$   as polynomials
in $P_3$, $\tM_1$ and $\tM_1'$
(with coefficients in $\qs(q,\nu,w,t)$).
We  thus get rid of five
unknown series (but introduce a new one, namely $\tM_1$). Reporting
these five expressions in our differential system gives a system
of four equations in
$P_0, P_1,  P_3$ and $ \tM_1$, of order 1 in
$P_0, P_1,  P_3$ and order 2 in $\tM_1$.
We thus expect $\tM_1$ to satisfy a differential equation of order at
most 5. We study in Sections~\ref{sec:q=4} and~\ref{sec:sd} two
special cases, of combinatorial or physical
interest, where the order is only 3.

\section{Properly coloured triangulations: $\boldsymbol{\nu=0}$}
\label{sec:nu=0}
We 
 specialize the differential system obtained for triangulations to  $\nu=0$, and recover the solution of the problem studied by Tutte between 1973 and 1984. Recall
that, for  generic values of $q$ and $\nu$, we have  $ \tT_1=
\nu \tT _2$, where $\tT _i$ counts
$q$-coloured triangulations with outer degree~$i$. Thus $\tT _1$ vanishes when
$\nu=0$,  and we focus on $ T_2$ instead, as Tutte did.

\begin{Theorem}[{\bf Tutte~\cite{tutte-differential}}]
\label{thm:nu0}
Let $\tT_2\equiv \tT_2(q,w)$ denote the \gf\ of properly $q$-coloured
near-triangulations of
outer degree $2$,  counted by the number of
vertices (as before, the root-vertex is coloured in a prescribed colour). This series is characterized by
$$
2(1-q)w+(w+10 \tT_2-6w\tT'_2) \tT''_2+(4-q)(20
\tT_2-18w\tT_2'+9w^2\tT_2'')=0,
$$
with the initial conditions $\tT_2=O(w^2)$.
Equivalently, $T_2=\sum_{n\ge 2} a_n(q)w^n$, where $a_n\equiv a_n(q)$
is given by $a_2=q-1$ and for $n>0$,
$$
(n+1)(n+2)a_{n+2}=(q-4)(3n-1)(3n-2)a_{n+1} + 2\sum_{i=1}^n
i(i+1)(3n-3i+1)a_{i+1}a_{n+2-i}.
$$
\end{Theorem}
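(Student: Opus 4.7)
\smallskip
\noindent My plan is to specialize the differential system of Theorem~\ref{thm:ED-Triang} and the five non-differential equations of Section~\ref{sec:simp-T} at $\nu=0$ (so $\beta=-1$) and eliminate every unknown except~$T_2$. Since $T_1=\nu\,T_2$ and $Q_2=2\nu$ both vanish at $\nu=0$, the generic identities involving them must first be divided by~$\nu$ before specializing: this is the only step that is not automatic. Carrying this out in~\eqref{P2Q1-T}, \eqref{Q0R-T}, \eqref{Q1R-T}, \eqref{Tp1-T} and (after division by~$\nu$) in~\eqref{Q1-PQt-bis}, one obtains
$$Q_1=1,\qquad R_0=2,\qquad R_1=2(4-q),\qquad P_2=\tfrac14-2T_2',\qquad Q_0=-T_2'-3(4-q)w$$
at $\nu=0$. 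Since $D(w,x)=1+(4-q)x$ is linear in~$x$ here, one then observes the coincidence $R=R_0+R_1 x=2D$.

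Exploiting $R=2D$ together with $D'_w=0$, the identity~\eqref{syst-line-T} collapses: after cancelling a factor of~$D$ it reduces to the polynomial identity
$$2Q'_w P - Q\,P'_w = -2\,D\,P'_x$$
of degree~$3$ in~$x$. Extracting the coefficients of $x^0,x^1,x^2$ and~$x^3$ produces four scalar ordinary differential equations in~$w$. The $x^3$-coefficient is automatically satisfied by the substitutions above. The $x^2$-coefficient determines $P'_1$ as an explicit polynomial in $T_2,T_2',T_2''$; integrating in~$w$ and using $P_1(0)=0$ (inherited from $P(0,x)=x^2(x+1/4)$) yields a closed-form expression for $P_1$ in $T_2$ and~$T_2'$.

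The $x^1$-coefficient similarly provides a formula for $P'_0$ in terms of $T_2$ and its derivatives, which integrates (using $P_0(0)=0$) to give~$P_0$. Substituting $P_0$, $P_1$, $Q_0$, $P_2$ into the remaining identity $2Q'_0 P_0-Q_0 P'_0=-2P_1$ coming from the $x^0$-coefficient produces a second-order polynomial differential equation in $T_2$ alone; a direct simplification (performed in the accompanying Maple session) shows that this is precisely Tutte's equation. The recurrence for~$a_n$ then follows by extracting the coefficient of~$w^n$ on both sides.

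The hard part is the bookkeeping: one must track which generic identities vanish naively at $\nu=0$ and renormalize them by~$\nu$ before substituting, and verify that the integrations introduced at the $x^2$- and $x^1$-stages propagate cleanly to an algebraic (and not integro-differential) equation in~$T_2$---equivalently, that the remaining $x^0$-identity really is the compatibility condition of the four coefficient equations, with no information lost to the integration constants.
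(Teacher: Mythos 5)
Your overall plan matches the paper's: specialize the triangulation system of Theorem~\ref{thm:ED-Triang} and the Section~\ref{sec:simp-T} identities at $\nu=0$, observe that $D$ is linear in $x$ and $w$-independent, that $R=2D$, and that three of the four coefficient equations determine $P_1$, $P_0$, while the fourth becomes Tutte's equation. Your renormalization-by-$\nu$ trick for extracting $P_2=\tfrac14-2T_2'$ and $Q_0=-T_2'-3(4-q)w$ is a slightly different route from the paper's (which derives a once-eliminated identity~\eqref{T2prime} and reads $\tQ_0$ off the last coefficient equation), but it lands in the same place and is correct.

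Where you part company with the paper — and where the gap lives — is precisely the integration step you flag as ``the hard part.'' The paper does \emph{not} expand in powers of $x$. It changes variable to $X=x+1/(4-q)$, so that $D=(4-q)X$ has \emph{no constant term}. With $D=(4-q)X$, the $X^0$-coefficient of the collapsed identity is homogeneous: it reads $2\tP_0\tQ_0'-\tQ_0\tP_0'=0$, a pure Wronskian relation that integrates immediately (and with no ambiguity from total derivatives) to $\tP_0=c\,\tQ_0^2$, $c$ fixed by the $w=0$ data. That is the whole trick. In your $x$-expansion the constant term $D(w,0)=1$ survives, so your $x^0$ equation is $2Q'_0P_0-Q_0P'_0=-2P_1$, and your $x^1$ equation expresses $P'_0$ as a sum of terms that are not visibly a total $w$-derivative. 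You are correct that the resulting antiderivative does turn out to be algebraic in $T_2,T_2',w$ (it must be, by comparison with the paper's closed form), but your proposal supplies no reason and delegates the verification to Maple; the honest status of that step, absent the change of variable, is an unproved claim. The shift $X=x+1/(4-q)$ is the missing idea that makes the $P_0$-integration transparent and lets the remaining equation ($X^1$ in the paper's indexing) produce Tutte's equation without any leftover integrals.

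Two smaller remarks. First, note the paper integrates the $X^2$-coefficient for $\tP_1$ and the $X^0$-coefficient for $\tP_0$, leaving $X^1$ for Tutte, whereas you integrate $x^2$ then $x^1$, leaving $x^0$. Any such choice is legitimate since all four equations are exactly satisfied and one linear combination must reduce to Tutte; but the specific choices the paper makes are the ones for which the integrals are manifestly elementary. Second, the initial conditions you need are $P_1(0)=0$, $P_0(0)=0$ from $P(0,x)=x^2(x+1/4)$, which you invoke correctly; the corresponding values in the shifted variable, $\tP_0(0)=q/(4(q-4)^3)$ etc., are what actually pin the constant $c$ in $\tP_0=c\,\tQ_0^2$.
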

\noindent{\bf Remark.} Take a properly
coloured near-triangulation of outer degree 2, having at least one
finite face (that is, having at least three vertices). If we delete
its root-edge, we obtain a properly coloured triangulation with the
same number of vertices. This means that when $\nu=0$,
$$
T_2=(q-1)w^2+T_3,
$$
where $T_3$ counts properly $q$-coloured triangulations (with the root-vertex coloured in a prescribed colour). The series
$H$ occurring in~\eqref{Tutte-ED} is $qT_2$, and Tutte's differential
equation~\eqref{Tutte-ED} is  equivalent to the above theorem.
\begin{proof}
 We specialize Theorem~\ref{thm:ED-Triang} to  $\nu=0$, that is
 $\beta=-1$. The polynomial $D(w,x)$ becomes
independent of $w$ and has degree  one in $x$:
\beq\label{D-nu0}
D(w,x)=(4-q)x+1.
\eeq
Let us now specialize the five generic non-differential identities
(\ref{P2Q1-T}-\ref{Tp1-T}). The first three  give
\beq\label{R-val-4-T}
Q_1=1, \quad R_0=2 \quad\hbox{and}\quad R_1=2(4-q).
\eeq
Observe that  $R(w,x)=2D(w,x)$. The identities~\eqref{Q1-PQt-bis}
and~\eqref{Tp1-T} then become tautologies at $\nu=0$, and we still need to
relate $T_2$ to the series $Q_j$ and $P_j$.  We use
for that the equation~\eqref{T2prime}.
Once specialized at $\nu=0$,
it reads:
\beq\label{T2t-nu0}
2\tT _2'+P_2=1/4.
\eeq
This will replace the identity~\eqref{Tp1-T} relating $T_1'$ and $R_1$. We
proceed without deriving a counterpart of~\eqref{Q1-PQt-bis}.

We now go back to the differential system~\eqref{deT}, which simplifies into
\beq\label{syst-T-4}
2\,\frac\partial{\partial x} \left( \frac1 P\right)= \frac 1 {QD}
\frac{\partial}{\partial w}\left( \frac{Q^2}{P}\right).
\eeq
This is a rational expression in $x$.
Given the form~\eqref{D-nu0} of
$D$, we find convenient to expand the
numerator of~\eqref{syst-T-4} in powers of $X=x+1/(4-q)$ rather than $x$. We write
accordingly:
$$
P(w,x)= X^3+\tilde P_2X^2+\tilde P_1X+\tilde P_0,
\quad 
Q(w,x)= X+\tilde Q_0, \quad
R(w,x)=2(4-q) X,
$$
where the $\tP_j$'s and $\tQ_0$ are series in $w$ with rational
coefficients in $q$. Since we have taken
into account the values~\eqref{R-val-4-T} of $Q_1$, $R_0$ and
$R_1$, we have only four unknown series left. The  identity~\eqref{T2t-nu0} gives
\beq\label{Tt2-nu0-tilde}
\tP_2=\frac{q+8}{4(q-4)}-2\tT'_2.
\eeq
The following   initial conditions are translated from~\eqref{init-T-thm}:
\beq\label{init-0-T}
\tP_0 (0)= \frac q{4(q-4)^3}, \qquad
\tP_1(0)= \frac {2+q}{4(q-4)^2}, \qquad
\tP_2(0)= \frac {8+q}{4(q-4)},\qquad
\tQ_0(0)= \frac 1{q-4}.
\eeq
Expanding in $X$ the numerator of~\eqref{syst-T-4} gives a system of
four differential equations between the
four series $\tP_0, \tP_1$, $\tP_2$ and $\tQ_0$:
\begin{align*}
  2\tP_0 \tQ_0'-\tQ_0\tP_0'&=0,
\\
2\tQ_0'\tP_1-\tQ_0\tP'_1-\tP_0'+2(4-q)\tP_1 &=0,
\\
2\tQ_0'\tP_2-\tQ_0\tP'_2-\tP_1'+4(4-q)\tP_2 &=0,
\\
2\tQ_0'-\tP'_2+6(4-q)&=0.
\end{align*}
The first and last equations are readily solved.  With the initial
conditions~\eqref{init-0-T}, the last one gives
\begin{align}
\tQ_0&=\frac 1 2  \tP_2+3(q-4)w-\frac q{8(q-4)},\nonumber
\\
&=-\tT_2'+3(q-4)w+\frac 1{q-4},\label{Q0T2-T-nu0}
\end{align}
thanks to~\eqref{Tt2-nu0-tilde},  while the first one gives
\beq\label{P0Q0-T-nu0}
4(q-4)\tP_0=q\tQ_0^2=  q\left(-\tT_2'+3(q-4)w+\frac 1{q-4} \right)^2.
\eeq
 Reporting the  expressions~\eqref{Tt2-nu0-tilde} and~\eqref{Q0T2-T-nu0} of
 $\tP_2$ and $\tQ_0$ in the third differential equation gives an
equation in $\tP_1$ and $\tT_2$ that we can integrate (using the
initial conditions~\eqref{init-0-T}):
$$
\tP_1= (\tT_2')^2+\tT'_2\left(
6(q-4)w-\frac{q+4}{2(q-4)}\right)+10(4-q)\tT_2+\frac w 2 (q+8)
+\frac{2+q}{2(q-4)^2}.
$$
Reporting  this, as well as~\eqref{Q0T2-T-nu0} and~\eqref{P0Q0-T-nu0}   in the second differential equation finally gives
Tutte's equation for the \gf\ of properly $q$-coloured
near-triangulations of outer degree~2. The rest of the proof is straightforward.
\end{proof}


\section{Four colours: $\boldsymbol{q=4}$}
\label{sec:q=4}
%
Several signs suggest that the case of four colours should be
simpler. First, this is visible on the differential equation and
recurrence relation of Theorem~\ref{thm:nu0}. Then, the polynomial
$D(t,x)$, given by~\eqref{D-encore} (or its counterpart $D(w,x)$ given
by~\eqref{D-exprT} for triangulations) becomes a square when $q=4$, independent
of the size variable $t$ (or $w$). We  use this simplification to
derive a differential equation for the \gf\ of four-coloured maps, of
a relatively small order (3 for general maps, 2 for triangulations).

\subsection{Triangulations}
\label{sec:tri-4}
\begin{Theorem}
  Let $\tT_1\equiv \tT_1(\nu,w)$ be the \gf\ of four-coloured
near-triangulations of outer degree $1$, counted by the number of
  vertices ($w$) and the number of monochromatic edges $(\nu)$ (as before, the root-vertex is coloured in a prescribed colour). This
  series is characterized by the initial conditions
  $\tT_1=\nu(3+\nu)w^2+O(w^3)$  and the following differential
  equation of order $2$ and degree $6$, which we write in terms of
 $S=2\tT_1-w$:
$$
P(S, S', S'')=0
$$
with
\begin{multline*}
 P(X,Y,Z)=48  \beta ^4  \nu  ^2  w^2  Y^4 Z^2 +3  \beta
^5  \alpha   w  Y^4 Z^2+10240  \nu  ^4  \beta  ^2 w^3 Y^3  Z^2 -5  \beta ^5  \alpha   X   Y^3 Z^2\\
-8  \beta
^4 w  (10  X  \nu  ^2-\alpha  ^2)   Y^3 Z^2
 +768  \nu  ^2  \alpha   \beta
^3  w^2   Y^3 Z^2+ 589824  \nu ^6 w^4 Y^2 Z^2 +49152  \nu  ^4  \beta    \alpha   w^3  Y^2  Z^2\\
 -15  \beta ^4  \alpha ^2  X  Y^2 Z^2 -288  \beta
^2  \nu  ^2  w^2 (80  X  \nu  ^2-3  \alpha  ^2)  Y^2  Z^2 -6  \alpha   \beta
^3 w  (280  X  \nu  ^2-\alpha  ^2)  Y^2  Z^2 \\
+15   \alpha   \beta ^3  (40  X  \nu  ^2-\alpha  ^2) X Y  Z^2
-138240   \nu  ^4  \beta
  \alpha  w^2 X  Y Z^2 -2160  \nu  ^2 \beta
^2  \alpha ^2  w  X  Y  Z^2 \\
-6144  \nu  ^4 w^3  (320  X  \nu  ^2+\alpha  ^2)  Y Z^2 +5   \beta ^2  (160  X  \nu  ^2-\alpha  ^2)   (20  X  \nu  ^2+\alpha  ^2) X  Z^2\\
+80 \nu ^2w^2 (128 X \nu ^2+\alpha ^2) (160 X \nu ^2-\alpha ^2) Z^2 +
\beta     \alpha   (160  X  \nu  ^2-\alpha  ^2)  (560  X  \nu
^2+\alpha  ^2) w  Z^2 \\
+24 \nu ^2 \beta ^4 wY^5 Z -40  \beta
^4  \nu  ^2   X Y^4  Z
+40  \beta  ^3  \nu  ^2  \alpha   w   Y^4 Z
+3584  \nu  ^4  \beta ^2 w^2   Y^4 Z +98304 Z Y^3 \nu ^6 w^3\\
+1536  \nu  ^4  \beta    \alpha   w^2  Y^3  Z-24  \nu  ^2  \beta
^2 w  (320  X  \nu  ^2-\alpha  ^2)   Y^3 Z
-60  \nu ^2 \alpha  \beta ^3 X Y^3 Z -8  \beta   \alpha   \nu  ^2 w  (160  X  \nu  ^2-\alpha  ^2)    Y^2 Z\\
+4 \nu ^2 (160  X  \nu  ^2-\alpha  ^2) (5 \beta ^2 X-256 \nu ^2 w^2) Y^2 Z
+4096 \nu  ^6  w^2  Y^4+ 128 \nu ^4 \alpha  \beta  Y^4 w+160 X \nu ^4 \beta ^2 Y^4,
\end{multline*}
where we have written $\beta=\nu-1$ and $ \alpha=\nu-2$.
\end{Theorem}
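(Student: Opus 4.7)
The starting point is Theorem~\ref{thm:ED-Triang} specialized at $q=4$. The polynomial $\bD(w,x)$ then degenerates into the perfect square $\bD(w,x)=(2\nu x+\beta)^{2}$, which crucially no longer depends on the vertex variable $w$. Write $E=2\nu x+\beta$, so that $\bD=E^{2}$ and $\partial E/\partial w=0$. This is the key simplification brought by the four-colour case.

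I apply next the five generic non-differential identities~(\ref{P2Q1-T})--(\ref{Tp1-T}) of Section~\ref{sec:simp-T} at $q=4$. Identity~(\ref{P2Q1-T}) gives $Q_{1}=\nu P_{2}-\nu/4+1$. Identity~(\ref{Tp1-T}) specializes to $R_{1}=8\nu\tT_{1}'-4\nu=4\nu S'$, where $S=2\tT_{1}-w$. Identity~(\ref{Q1R-T}), which at $q=4$ collapses to $\beta R_{1}=2\nu R_{0}$, then forces $R_{0}=2\beta S'$. Hence
$$
\bR(w,x)=2S'\,E,
$$
so that $\bR$ becomes proportional to $\sqrt{\bD}$. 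Finally, (\ref{Q1-PQt-bis}) expresses $P_{1}$ polynomially in $P_{2}$, $Q_{0}$ and $\tT_{1}$.

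With these substitutions, $\bR^{2}/(\bP \bD^{2})=4S'^{2}/(\bP E^{2})$, and after clearing denominators the differential system~(\ref{deT}) reduces to the single polynomial identity
$$
2\bP\,\bQ'_{w}-\bQ\,\bP'_{w}+2S'\,E\,\bP'_{x}+8\nu S'\,\bP=0.
$$
Using the known values $P_{3}=1$, $Q_{2}=2\nu$ and $Q_{1}=\nu P_{2}-\nu/4+1$, the coefficients of $x^{4}$ and $x^{5}$ vanish automatically, so this is effectively a polynomial of degree~$3$ in $x$, producing four differential equations in $w$ for the four remaining unknowns $P_{0},P_{2},Q_{0},\tT_{1}$.

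It remains to carry out the differential elimination of the auxiliary series $P_{0},P_{2},Q_{0}$ so as to isolate a single ODE for $\tT_{1}$ (or equivalently for $S$). The initial condition $\tT_{1}=\nu(3+\nu)w^{2}+O(w^{3})$ is obtained by specializing at $q=4$ the expansion of $\tT_1$ given just before Theorem~\ref{thm:ED-Triang}. The main obstacle is the elimination itself, which involves substantial polynomial manipulation and is naturally delegated to computer algebra; the explicit output is precisely the order-$2$, degree-$6$ polynomial $P(S,S',S'')$ displayed in the statement. Eliminating three first-order series from four first-order equations typically raises the order by exactly one, explaining why $S''$ is the highest derivative that appears; the degree $6$ reflects the total degree of the resultants in the remaining variables once $\bP$ (degree $3$ in $x$) and $\bQ$ (degree $2$ in $x$) have been combined.
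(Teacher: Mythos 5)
Your setup is correct and closely parallels the paper's: specializing $q=4$ makes $\bD=(2\nu x+\beta)^2=E^2$ independent of $w$, the identity $\bR^2/(\bP\bD^2)=4S'^2/(\bP E^2)$ is right, and the reduction of~\eqref{deT} to the single polynomial identity $2\bP\bQ'_w-\bQ\bP'_w+2S'E\bP'_x+8\nu S'\bP=0$ is a nice reformulation of the paper's expansion in $X=x+\beta/(2\nu)$. The relations $Q_1=\nu P_2-\nu/4+1$, $R_1=4\nu S'$, $R_0=2\beta S'$ are correct.

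However, there is a genuine gap in your equation count. You claim four non-trivial differential equations for the four unknowns $P_0,P_2,Q_0,\tT_1$, but once you substitute the expression for $P_1$ from~\eqref{Q1-PQt-bis} (which you must do to get down to these four unknowns), the coefficient of $x^3$ in your polynomial identity also vanishes identically. This is exactly what the paper observes: its fourth equation in the $X$-system is a \emph{consequence} of the relations expressing $Q_1$, $P_1$ and $R_1$. So you are actually left with only \emph{three} non-trivial differential equations for four unknowns, and the elimination cannot succeed.

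What is missing is the second non-differential relation obtained by eliminating $R_0$ between~\eqref{Q0R-T} and~\eqref{Q1R-T} \emph{before} setting $q=4$. At $q=4$ these two identities degenerate to the single relation $\beta R_1=2\nu R_0$, which is all you used; but their combination, taken at generic $q$ and then specialized, survives as the independent relation~\eqref{RQ-T-q4}
$$(16\nu^2w+\alpha\beta)R_1-4\nu\beta Q_1+8\nu^2Q_0+4\nu\beta^2=0,$$
which in the $X$-expansion is~\eqref{Q0sol-4-T}. This is precisely the extra equation the paper has to ``inject'' to close the system, and it is what lets the integration of the first differential equation produce $4\nu(16\nu^2w+\alpha\beta)\tilde P_0=-\beta\tilde Q_0^2$. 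Without it your system is one relation short, and no amount of elimination will yield a well-posed second-order ODE for $S$.

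Two minor further points. First, the claim that ``eliminating three first-order series from four first-order equations typically raises the order by exactly one'' is a heuristic rather than an argument, and should be replaced by the explicit integrations (\ref{P0sol-4-T}) and (\ref{Q1sol-4-T}) that the paper carries out, which actually drop the differential order before the final resultant. Second, you should also state the argument (which the paper gives in one line) that the underlying recurrence determines all coefficients of $\tT_1$ once the first three are prescribed, which is what justifies the word ``characterized''.
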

\begin{proof}
  We specialize Theorem~\ref{thm:ED-Triang} to $q=4$. The polynomial
  $D(w,x)$ becomes a  square, and is independent of $w$:
$$
D(w,x)=(2\nu x+\be)^2.
$$
We we use extensively the notation $\be=\nu-1$, $\al=\nu-2$.
Let us specialize to $q=4$ the five non-differential identities
(\ref{P2Q1-T}-\ref{Tp1-T}).
Equation~\eqref{P2Q1-T}  holds verbatim. The
identities~\eqref{Q0R-T} and~\eqref{Q1R-T} both specialize to
$$
2\nu R_0=\be R_1,
$$
so that $R(w,x)$ is a multiple of $\sqrt{D(w,x)}=2\nu
x+\be$. However, we can also derive  a second relation from~\eqref{Q0R-T}
and~\eqref{Q1R-T}: if we first eliminate $R_0$ between them, and then set
$q=4$, we  obtain
\beq\label{RQ-T-q4}
(16\nu ^2w+\al\be)R_1-4\nu  \be Q_1+8\nu ^2Q_0+4\nu \be^2=0.
\eeq
We also have
the two relations~\eqref{Q1-PQt-bis}
and~\eqref{Tp1-T} relating $\tT_1$ to the other unknown series.

We now go back to the differential system~\eqref{deT}. We will expand its
numerator in powers of $X=x+\be/(2\nu)$. 
We write accordingly
$$
P(w,x)= X^3+\tilde P_2X^2+\tilde P_1X+\tilde P_0,
\qquad
Q(w,x)= 2\nu X^2+\tQ_1 X+\tilde Q_0, \qquad  R(w,x)=R_1 X,
$$
where the $\tP_j$'s and $\tQ_j$ are series in $w$ with rational
coefficients in $\nu$. Note that we have six unknown series instead of seven
because $R(w,x)$ is a multiple of $X$ (in other words, $\tR_0=0$). The
remaining four generic identities, namely~\eqref{P2Q1-T},
\eqref{RQ-T-q4}, \eqref{Q1-PQt-bis}
and~\eqref{Tp1-T}, translate as follows:
\begin{align}
\label{Q1P2t-4}
\tQ_1= &\ \nu \tP_2-3\al/4,
\\
\label{Q0sol-4-T}
8\,{\nu}^{2}\tQ_0 =&\ - \left( 16\,{\nu}^{2}w+\al\be \right)R_1  ,
\\
\label{P1sol-4-T}
\nu \tP_1     =&\
10\nu(2\tT_1-w)  +{ {\tQ_0  }}+\nu  {\tP_2  } ^{2}/4\,+3\al \tP_2 /8\,+{ {5}}{ {\al ^{2}}{}}/(64\nu),
\\
\label{R1sol-4-T}
R_1   =&\ 4\nu (2\tT_1 '-1).
\end{align}
The following  initial conditions are translated from~\eqref{init-T-thm}:
\beq\label{init-4-T}
  \tP_0(0)=\displaystyle-{\frac { \be ^{2}\al }{16{\nu
}^{3}}}
,
\qquad \tP_1(0)=
\displaystyle{\frac { \be  \left( \al+\be \right) }{4{\nu}^{
2}}}
,\qquad
 \tQ_0(0)= \displaystyle{\frac { \be \al }{2\nu}}
,
\qquad \tQ_1(0)= -\al-\be.
\eeq

 Expanding in $X$ the numerator
of~\eqref{deT} gives a system of five differential equations in the six
series $\tP_0, \tP_1, \tP_2, \tQ_0, \tQ_1, \R_1$:
\begin{align*}
  2\tP_0 \tQ_0'-\tQ_0\tP_0'+2\tP_0R_1&=0,
\\
2\tQ_0'\tP_1+2\tP_0\tQ_1'-\tP_0'\tQ_1-\tP_1'\tQ_0 +3\tP_1R_1&=0,
\\
2\tQ_0'\tP_2+2\tP_1\tQ_1'-\tP_1'\tQ_1-\tP_2'\tQ_0 -2\nu\tP_0'+4\tP_2R_1&=0,
\\
2\tQ_1'\tP_2-\tQ_1\tP'_2-2\nu\tP_1'+2\tQ_0'+5R_1 &=0,
\\
\tQ_1'-\nu\tP'_2&=0.
\end{align*}
 We will need to inject an additional equation
 (namely~\eqref{Q0sol-4-T}) to complete this system.

The fifth (and last) equation of the system is solved by~\eqref{Q1P2t-4}.

The fourth equation is a consequence of~\eqref{Q1P2t-4}, \eqref{P1sol-4-T}
and~\eqref{R1sol-4-T}.

In the first equation of the system, let us replace $\tR_1$ by its
expression in terms of $\tQ_0$ derived from~\eqref{Q0sol-4-T}. Using
the initial conditions~\eqref{init-4-T}, the  resulting equation (in $\tP_0$ and
$\tQ_0$) can now be integrated into:
\beq\label{P0sol-4-T}
4 \nu  \left( 16\,{\nu}^{2}w+\al\be \right) \tP_0
= -\be \tQ_0    ^{2}.
\eeq

Let us finally  eliminate $\tR_1$ between the  first and second equations of the
system. The resulting  equation can
be integrated into
$$
2\tQ_1= \frac{\tP_1 \tQ_0}{\tP_0} + c \sqrt {\tP_0},
$$
for some constant $c$. Using the initial
conditions~\eqref{init-4-T}, this constant is found to be zero, so
that
\beq\label{Q1sol-4-T}
2\tQ_1\tP_0=\tP_1\tQ_0.
\eeq

This is as far as we have been able to go in the integration of the
system. At this stage, we have seven unknown series (the $\tP_j$'s,
$\tQ_j$'s, $R_1$ and $\tT_1$) related by one differential equation (the
third one in the system), and six ``non-differential'' equations (namely
(\ref{Q1P2t-4}--\ref{R1sol-4-T}),
\eqref{P0sol-4-T} and \eqref{Q1sol-4-T}) in which the only derivative is
$\tT_1'$. Using in the following order
\eqref{Q1P2t-4},~\eqref{R1sol-4-T},~\eqref{Q0sol-4-T},~\eqref{P1sol-4-T},
and~\eqref{P0sol-4-T}, we can now express $\tQ_1$, $R_1$, $\tQ_0$, $\tP_1$,
$\tP_0$  in terms of $\tP_2$, $\tT_1'$  and
$\tT_1$. Plugging these expressions in the remaining non-differential
equation~\eqref{Q1sol-4-T} and in the remaining differential equation
gives a pair of differential  equations
between $\tP_2$ and $\tT_1$. Given the form of~\eqref{P1sol-4-T} and~\eqref{R1sol-4-T},
it makes sense to write them in terms of $S=2T_1-w$. They read:
%
$$
4\left(4  \be \nu \tP_2+128 {\nu}^{
2}w+5  \al\be \right)
S'=
 16 {\nu}^{2} { \tP_2  } ^{2}+24 \nu  \al \tP_2 +
640{\nu}^{2}S  +5 \al^{2},
$$
\begin{multline*}
2\left( 16 {\nu}^{2}w+\al\be \right)
\left(  4 \nu   \tP_2 +3 \al-2 \be   S'     \right) S''
-32 {
\nu}^{2} \be{S'} ^{2}\\
+8 \nu  \left(  \left( 16 {\nu}^{2}w+\al\be \right) \tP_2' +4 {\nu}^{2}\tP_2 -3 {\nu}\al \right) S'
- \left( 12 {\nu
}^{2}\al \tP_2 +320  {\nu}^{3}S+7 \al ^{2}\nu \right) \tP_2 '=0.
\end{multline*}
We finally eliminate $\tP_2$ as follows: we differentiate the first
equation above to obtain  a
total of three \emm polynomial, equations in $S, S', S''$, $\tP_2$
and $\tP_2'$, from which we eliminate $\tP_2$
and $\tP_2'$ using resultants. This yields the
differential equation
of  the theorem.

To finish, one checks that the recurrence relation that underlies the
differential equation for $T_1$ determines all coefficients of this
series once  we prescribe the first three coefficients of $T_1$.
\end{proof}
\subsection{General planar maps}
\begin{Theorem}
  Let $M_1\equiv M_1(\nu,w;t)$ be the \gf\ of four-coloured planar maps, counted by
  vertices ($w$), edges ($t$) and  monochromatic edges $(\nu)$ (as
  before, the root-vertex is coloured in a prescribed colour). This
  series is characterized by the initial conditions
  $M_1=w+w(\nu +3w+\nu w)t  +O(t^2)$  and a  differential
  equation of order $3$ and degree $11$.
\end{Theorem}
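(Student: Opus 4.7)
The plan is to follow exactly the strategy used for triangulations in Section~\ref{sec:tri-4}, but starting from the differential system of Theorem~\ref{thm:ED} rather than from Theorem~\ref{thm:ED-Triang}. First I would specialize $q=4$ in the polynomial $\bD(t,x)$ of~\eqref{D-encore}: since $q\nu+\beta^2=(\nu+1)^2$, we find
\[
\bD(t,x)= \bigl((\nu+1)x-2\bigr)^2,
\]
independently of $t$ and $w$. In particular, $\sqrt{\bD}$ is a polynomial in $x$, and it will be natural to introduce the shifted variable $X=x-2/(\nu+1)$, so that $\bD=(\nu+1)^2X^2$.

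Next I would specialize the five generic non-differential relations~\eqref{P3Q1}--\eqref{Mt1-expr} at $q=4$. The two identities~\eqref{Q0R-M} and~\eqref{Q1R-M} both become proportional to $(\nu+1)R_0-2R_1$ (up to explicit terms), forcing $\bR(t,x)$ to be a multiple of $(\nu+1)x-2$, hence of $X$; this kills one unknown. A second combination of~\eqref{Q0R-M} and~\eqref{Q1R-M} survives as a genuine linear relation among the remaining coefficients, playing the role of~\eqref{RQ-T-q4} in the triangulation case. The relation~\eqref{P3Q1} expresses $Q_1$ in terms of $P_3$, \eqref{Mt1-expr} expresses $R_1$ in terms of $P_3$ and $\tM_1'$, and~\eqref{M11-PQ-encore} expresses $P_2$ in terms of $P_3$, $Q_0$ and $\tM_1$. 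I would then rewrite $\bP,\bQ,\bR$ in the variable $X$ as
\[
\bP= X^4+\tilde P_3 X^3+\tilde P_2 X^2+\tilde P_1 X+\tilde P_0,\quad
\bQ= X^2+\tilde Q_1 X+\tilde Q_0,\quad
\bR= \tilde R_1 X,
\]
and translate the initial conditions~\eqref{init-general} into the initial values of the $\tilde P_j,\tilde Q_j,\tilde R_1$.

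The differential system~\eqref{de-encore} then becomes the vanishing of a polynomial in $X$ of degree at most $7$, giving eight differential equations relating the eight unknowns $\tilde P_0,\ldots,\tilde P_3,\tilde Q_0,\tilde Q_1,\tilde R_1,\tM_1$. As in Section~\ref{sec:tri-4}, I expect several of these to be integrable in closed form: the top equation will force a linear relation between $\tilde Q_1$ and $\tilde P_3$ (already known via~\eqref{P3Q1}), the equation corresponding to the constant term in $X$ will integrate to an algebraic relation expressing $\tilde P_0$ as a rational function of $\tilde Q_0$ and $\tilde R_1$ (analogue of~\eqref{P0sol-4-T}), and an intermediate combination should integrate to $2\tilde Q_1\tilde P_0=\tilde P_1\tilde Q_0$-type identity (analogue of~\eqref{Q1sol-4-T}). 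Together with the five surviving non-differential relations, these would leave us with two or three genuine differential equations in $\tM_1$ and one or two auxiliary series (a counterpart of $\tilde P_2$, and possibly one more because of the extra degree of $\bP$).

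The final, and main, obstacle is the elimination of these auxiliary series. Following the triangulation case, I would differentiate the relations as needed to obtain a saturated polynomial system in $\tM_1,\tM_1',\tM_1'',\tM_1'''$ and the auxiliary unknowns together with their derivatives, then eliminate the auxiliary series via successive resultants. Since we have one extra unknown compared to the triangulation setting (where the elimination produced an order-2 equation), and since a single differentiation-plus-resultant step typically raises the differential order by one, this is consistent with obtaining an order-$3$ differential equation for $M_1$; its degree $11$ would emerge from the resultant computation. All of this is best carried out in a computer algebra system, and the resulting equation should be verified against the series expansion $M_1=w+w(\nu+3w+\nu w)t+O(t^2)$ computed directly from~\eqref{M-def}; the recurrence underlying the differential equation, combined with the stated initial conditions, then determines $M_1$ uniquely.
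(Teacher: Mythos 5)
Your overall plan is the right one and matches the paper's proof almost step for step: specialize Theorem~\ref{thm:ED} at $q=4$, observe that $\bD(t,x)=((\nu+1)x-2)^2$ is a perfect square independent of $t$, shift to $X=x-2/(\nu+1)$, use the five generic non-differential relations to express some unknowns in terms of $P_3$, $\tM_1$ and $\tM_1'$, integrate the extreme-degree coefficients of~\eqref{de-encore}, and finally eliminate the surviving auxiliary series by resultants to get a third-order ODE of degree $11$.

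There is, however, a concrete error that would derail the computation: you write $\bR=\tilde R_1 X$, i.e.\ of degree~$1$ in $X$. You are implicitly copying the triangulation case, where $\bR$ has degree~$1$ by construction. For general planar maps, Theorem~\ref{thm:ED} gives $\bR$ of degree~$2$ with leading coefficient $[x^2]\bR = \nu+1-w(q+2\beta)$; at $q=4$, $\beta=\nu-1$ this is $(\nu+1)(1-2w)$, which is not zero. The correct conclusion from combining~\eqref{Q0R-M}, \eqref{Q1R-M} is that $(\nu+1)R_0+2R_1+4(1-2w)=0$ (note the sign on $2R_1$), which together with the known $R_2$ says only that $\bR$ is divisible by $\sqrt{\bD}=(\nu+1)x-2=(\nu+1)X$, hence $\bR(t,x)=(\nu+1)(1-2w)X^2+\tilde R_1 X$. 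Dropping the $X^2$ term changes the entire differential system (for instance $\bR'_x$ and the $x$-degree of the numerator in~\eqref{de-encore}) and would make all subsequent integrations inconsistent with the initial conditions. With the correct, degree-$2$ form of $\bR$, you still end up with seven unknown series $\tilde P_0,\dots,\tilde P_3,\tilde Q_0,\tilde Q_1,\tilde R_1$ (not two fewer as your parametrization suggests), six differential equations from~\eqref{de-encore}, and the additional non-differential relation coming from~\eqref{Q0R-M}--\eqref{Q1R-M}; then the elimination down to $\tilde P_3$ and $\tM_1$ proceeds as you describe.
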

The differential equation can be seen in the Maple session
accompanying this paper, on the authors' web pages.
\begin{proof}
We  specialize Theorem~\ref{thm:ED} to $q=4$. The polynomial $D(t,x)$ is
again a perfect square:
$$
D(t,x)=\left( (\nu+1)x-2\right)^2.
$$
Equation~\eqref{P3Q1} still holds of course. The identities~\eqref{Q0R-M}
and~\eqref{Q1R-M} both specialize to
$$
 \left( \nu+1 \right) R_0 +2R_1 +4(1-2w)=0.
$$
Given that $R_2=(\nu+1)(1-2w)$  (this is one of the initial conditions
in Theorem~\ref{thm:ED}), this means that $R(t,x)$ has a factor
$\sqrt{D}= (\nu+1)x-2$. As in the case of triangulations, we can
derive a second identity from~\eqref{Q0R-M}
and~\eqref{Q1R-M} by eliminating $R_1$
between them:
\begin{multline}\label{QR-M4}
\left(\beta - \left( \beta+2 \right) ^{2} \left( \beta+4 w \right) t
 \right)R_0
+  \left( \beta+2 \right)
 \left( \beta+4 w \right) Q_0+
2\left( \beta+4w \right)Q_1 =\\
4  \left( 2 w-1 \right)
 \left( \beta+2 \right)  \left( \beta+4 w \right) t-8w,
\end{multline}
with $\be=\nu-1$. The two characterizations~\eqref{M11-PQ-encore} and~\eqref{Mt1-expr} of
$\tM_1=t^2M_1$ still hold.

We now go back to the differential system~\eqref{de-encore}. We will expand its
numerator in powers of $X=x-2/(\nu+1)$ instead of $x$. We write
accordingly
$$
P(t,x)= X^4+\tP_3 X^3+\tP_2 X^2+\tP_1 X+\tP_0, \qquad
Q(t,x)=X^2+\tQ_1X+\tQ_0,
$$
and
$$
R(t,x)=(\nu+1)(1-2w)X^2+\tR_1X,
$$
where the $\tP_j$, $\tQ_j$ and $\tR_j$ are series in $t$ with rational
coefficients in $\nu$ and $w$. We have taken into account the fact
that $R$ is  a multiple of $X$.
The remaining
four identities, namely~\eqref{P3Q1}, \eqref{QR-M4}, \eqref{M11-PQ-encore}
and~\eqref{Mt1-expr},
translate as follows:
\beq\label{Q1P3t}
\tQ_1= \tP_3/2+2(2w-1)(\beta+2)t,
\eeq
\beq\label{Q0sol-4}
2((\beta+2)^2 (\beta+4 w) t-\beta) \tR_1+(\beta+2)^2 (\beta+4 w)\tQ_0=0,
\eeq
\beq\label{P2sol-4}
\tP_2 =
12  (\beta+2)^2 \tM_1+2 \tQ_0+ \tP_3^2/4-2 (2 w-1) (\beta+2) t \tP_3-12 t w \beta-12 t,
\eeq
\beq\label{R1sol-4}
\tR_1= 2  (\beta+2)^2 \tM_1'-(2 w-1) (\beta+2) \tP_3/2-2 \beta w-2.
\eeq
Eq.~\eqref{P2sol-4} actually results from the elimination of $\tQ_1$
between~\eqref{M11-PQ-encore} and~\eqref{Q1P3t}.

The following initial conditions are translated from~\eqref{init-general}:
\beq\label{init-M4}
\displaystyle \tP_0(0) =\frac{4\beta^2}{(2+\beta)^4},\quad
\displaystyle \tP_1(0)=\frac{4\beta(\beta-2)}{(2+\beta)^3} ,\quad
\displaystyle \tQ_0(0)= -\frac{2\beta}{(2+\beta)^2}, \quad
\displaystyle \tQ_1(0) = \frac{2-\beta}{2+\beta}.
\eeq
Expanding in $X$ the numerator
of~\eqref{de-encore} gives a system of (only) six equations in the seven unknown
series $\tP_0$, $\tP_1$, $\tP_2$, $\tP_3$, $\tQ_0$, $\tQ_1$, $\tR_1$:
\begin{align*}
  2\tP_0 \tQ_0'-\tQ_0\tP_0'+2\tP_0\tR_1 &=0,
\\
2\tQ_0'\tP_1+2\tP_0\tQ_1'-\tP_0'\tQ_1-\tP_1'\tQ_0 +3\tP_1\tR_1 &=0,
\\
2\tQ_0'\tP_2+2\tP_1\tQ_1'-\tP_1'\tQ_1-\tP_2'\tQ_0
-\tP_0'+4\tP_2\tR_1 -(2w-1)(\beta+2)\tP_1&=0,
\\
2\tQ_0'\tP_3+2\tP_2\tQ_1'-\tP_2'\tQ_1-\tP_3'\tQ_0 -\tP_1'+5\tP_3\tR_1 -2(2w-1)(\beta+2)\tP_2&=0,
\\
2\tQ_1'\tP_3-\tQ_1\tP'_3-\tP_2'+2\tQ_0'+6\tR_1 -3(2w-1)(\beta+2)\tP_3 &=0,
\\
2\tQ_1'-\tP'_3-4(2w-1)(\beta+2)&=0.
\end{align*}
We will need another equation (namely~\eqref{Q0sol-4}) to complete
this system.

The sixth (and last) equation of the system is solved by~\eqref{Q1P3t}.

The fifth equation is a consequence of~\eqref{Q1P3t}, \eqref{P2sol-4}
and~\eqref{R1sol-4}.

In the first equation of the system, we now replace $\tR_1 $ by its
expression in terms of $\tQ_0$ derived from~\eqref{Q0sol-4}. The
resulting equation (in $\tP_0$ and
$\tQ_0$) can now be integrated into:
\beq\label{P0sol-4}
((\beta+2)^2 (\beta+4 w) t-\beta)\tP_0+\beta\tQ_0^2=0.
\eeq
We have used the initial conditions~\eqref{init-M4} to determine the
integration constant.

Let us finally eliminate $\tR_1$ between  the first and
second equations of the
system.  The resulting equation  can
be integrated into
$$
2\tQ_1= \frac{\tP_1 \tQ_0}{\tP_0} + c \sqrt {\tP_0},
$$
for some constant $c$. Using the initial
conditions~\eqref{init-M4},
this constant is found to be zero, and
\beq\label{P1sol-4}
2\tQ_1\tP_0=\tP_1\tQ_0.
\eeq

This is as far as we have been able to go in the integration of the
system. Using (in this
order)~\eqref{Q1P3t},~\eqref{R1sol-4},~\eqref{Q0sol-4},~\eqref{P2sol-4},~\eqref{P0sol-4},
and~\eqref{P1sol-4}, we can now express $\tQ_1$, $\tR_1$, $\tQ_0$, $\tP_2$,
$\tP_0$ and finally $\tP_1$ in terms of $\tP_3$, $\tM_1'$  and
$\tM_1$. Plugging these expressions in the remaining equations of the
system (the third and fourth) gives a pair of differential equations
between $\tP_3$ and $\tM_1$.
Both have order $1 $ in $\tP_3$ and order 2 in $\tM_1$.
Denoting $\la=2+\be$ and $p= \la^2t (\be+4w)$, they read:
\begin{multline*}
  16\la ^2\left(p-\beta\right)\left(t\tM_1'-3 \tM_1\right)
  \tM_1''
-p\tP_3^2\tM_1''
+ 4\la t(2w-1)\left(p-3\beta\right)\tP_3
\tM_1''
\\
+16t \left( t\la ^2 (2\beta^2w+\beta(2w+1)^2+8w)-2\beta(\beta w+1)\right)\tM_1''
+8\la ^2p\,(\tM_1')^2
\\
-4t\la (2w-1) \left(
2
p-3\be \right)\tP_3'\tM_1'
+ p\left( \tP_3' -2\la(2w-1)\right)\tP_3 \tM_1'
+12\la (2w-1)\left(p-\beta\right)\tP_3' \tM_1
\\
-8t\la^2\left(4\be w^2+2w(\be^2+2\be+4)+\be\right) \tM_1'
-tw\la^2 \left( \tP_3+4\la t(2w-1)\right)
\tP_3'
\\
+2tw\la^3(2w-1) \tP_3 +8tw\la^2(1+w\be)=0,
\end{multline*}
and
\begin{multline*}
   \left(p \tP_3/\la
-4t(2 w-1)(p-2\be)  \right)\tM_1''\\
+p \left(\tP_3'/\la+12w-6 \right)\tM_1'
-3p/t\left(\tP_3'/\la+4w-2 \right) \tM_1
+ 2tw\la  \tP_3'=0.
\end{multline*}
By eliminating $\tP_3$ between them, we obtain a differential equation
of order 3 for $\tM_1$ (we first
eliminate $\tP_3'$ between the two equations to obtain one equation
between $\tP_3$, $\tM_1$, $\tM_1'$ and $\tM_1''$; then we follow the
steps described at the end of Section~\ref{sec:tri-4}).

To finish, one checks that the recurrence relation that underlies the
differential equation for $\tM_1$ determines all coefficients of this
series once the coefficients of $t^2$ and $t^3$ in $\tM_1$ are
prescribed.
\end{proof}

\section{Spanning forests of cubic maps:  $\boldsymbol{q=0}$}
\label{sec:q=0}
When $q=0$, the  series $T_1$ has an interesting combinatorial interpretation in
terms of spanning forests, as discussed in
Section~\ref{sec:specializations}.  Moreover, the polynomial $D(w,x)$ involved in the differential
system for triangulations
simplifies
greatly, and this will allow us to characterize $T_1$ by a second
order equation.
We say a map is \emm near-cubic, if all non-root vertices have degree 3.

\begin{Theorem}
\label{thm:forestT}
Let $G(\beta,w)$ be the \gf\ of near-cubic planar maps with a root-vertex of degree $1$, equipped with a spanning forest, counted by the number of
faces ($w$) and  the number of connected components of the forest,
minus one ($\beta$). This series is characterized by the initial
conditions $G=O(w^2)$ and the following differential equation, written in
terms of $W=2G-w/\beta$:
\begin{multline*}
0= \left( 3 {\beta }^{4}w {{W}'} ^{4}-{\beta }^{3} ( 5 W
      \beta -\beta w+w )  {{W}'} ^{3}+4\, ( \beta +1 ) (
      5\,W \beta -\beta w+w ) ^{2} \right) {W}''\\
 -48\,{\beta }^{2}w ( \beta +1)  {{W}'} ^{3}+8\,\beta  ( \beta +1 )  ( 5\,W \beta -\beta w+w )  {{W}'} ^{2}+4\, ( \beta ^2-1) ( 5\,W \beta -\beta w+w ){{W}'}.
\end{multline*}
\end{Theorem}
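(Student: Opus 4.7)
The plan is to specialize the differential system of Theorem~\ref{thm:ED-Triang} at $q=0$, exploit two dramatic simplifications that occur only in this case, and then translate the resulting equation for $T_1$ into the promised equation for $G$ using the combinatorial identity~(\ref{forests}).

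First I would observe that at $q=0$ the polynomial $D(w,x)=\beta^2(4x+1)$ is linear in $x$ and independent of $w$. Specializing the generic non-differential equations~(\ref{Q0R-T}) and~(\ref{Q1R-T}) at $q=0$ yields $R_1=-8\beta$ and $R_0=-2\beta$, so $R(w,x)=-2\beta(4x+1)$; in particular $R=-2D/\beta$, so $R$ is proportional to $D$. Since~(\ref{Tp1-T}) becomes a tautology at $q=0$, I would use~(\ref{T2prime}) instead, which simplifies to $P_2=2\beta T_1'/\nu+1/4$. Combined with~(\ref{P2Q1-T}) and the $q=0$ specialization of~(\ref{Q1-PQt-bis}), this expresses $P_2$, $Q_1$ and $P_1$ algebraically in terms of $T_1$, $T_1'$, $Q_0$ and $w$.

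With $R$ and $D$ both explicit and proportional, the differential system~(\ref{deT}) collapses: after dividing by $\beta^2(4x+1)$ it reduces to the remarkably clean identity
\[
2PQ_w - QP_w \;=\; 2\beta(4x+1)\,P_x.
\]
Extracting coefficients of $x^0,\dots,x^5$ yields six equations; the coefficients of $x^5$ and $x^4$ are automatic (recovering $Q_2=2\nu$ and $Q_1'=\nu P_2'$), the coefficient of $x^3$ integrates to the $q=0$ form of~(\ref{Q1-PQt-bis}), and the remaining three (coefficients of $x^0,x^1,x^2$) are genuine ODEs in $P_0$, $Q_0$, $P_2$. After substituting the algebraic expressions for $P_2,Q_1,P_1$ in terms of $T_1$, $T_1'$ and $Q_0$, eliminating $P_0$ and $Q_0$ between the three ODEs should produce a single ODE for $T_1$. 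The main obstacle is verifying that the resulting order is~$2$, rather than the generic~$4$ predicted in Section~\ref{sec:simp-T}; the drop of~$2$ is exactly what the two simplifications $D'_w=0$ and $R\propto D$ should provide, each one turning a coefficient extraction into an integrable rather than genuinely differential relation.

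Finally I would translate to $G$ via the dictionary $G(\beta,w)=\beta\,T_1(0,1+\beta,w/\beta)$, which follows from~(\ref{forests}) applied to near-triangulations and their near-cubic duals (a root-face of degree~$1$ in $M$ corresponds to a root-vertex of degree~$1$ in $M^*$). Writing $T_1'$ for the derivative with respect to the $w$-variable of $T_1$, we get $G_w(\beta,w)=T_1'(0,1+\beta,w/\beta)$, so the change of unknown $W=2G-w/\beta$ satisfies $W'=2T_1'-1/\beta$ and $2G=W+w/\beta$; substituting these into the order-$2$ ODE for $T_1$ should yield exactly the stated DE for $W$. The initial condition $G=O(w^2)$ follows from the fact that the smallest near-cubic map with a degree-$1$ root has $2$ faces, or equivalently from the fact that the coefficient of $w^n$ in $T_1$ at $q=0$ is divisible by $\beta^{n-1}$, which makes $\beta T_1(w/\beta)$ a genuine formal power series in $w$ with polynomial coefficients in $\beta$.
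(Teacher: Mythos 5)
Your proposal is correct and follows essentially the same route as the paper's own proof: specialize Theorem 5.1 at $q=0$, observe that $D=\beta^2(1+4x)$ is linear and $w$-independent, derive $R=-2D/\beta$ from the generic non-differential relations, replace the useless form of~\eqref{Tp1-T} by~\eqref{T2prime}, reduce~\eqref{deT} to $2PQ'_w-QP'_w=2\beta(4x+1)P'_x$, integrate the outer equations using the known values at $w=0$, and translate via $G(\beta,w)=\beta\,T_1(0,1+\beta,w/\beta)$. The only cosmetic difference is that the paper shifts to the variable $X=x+1/4$ (so that $D\propto X$) before extracting coefficients, whereas you work in $x$ directly; the integration pattern ($\tilde P_0=0$ from the $X^0$-coefficient, then the $X^2$-coefficient integrates to give $\tilde P_1$, then the $X^1$-coefficient yields the second-order ODE) and the final elimination are exactly the paper's.
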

\noindent{\bf Remark.} Theorem~\ref{thm:forestT} has been  rederived in~\cite{mbm-courtiel} using a simpler, and
more combinatorial, argument.
\begin{proof}
Let us  apply~\eqref{forests} to the set $\cM$  of
near-triangulations with outer degree 1.  We obtain:
\beq\label{T10G}
 \tT_1\big|_{q=0}
=\frac 1 {\be} G(\be, w\be),
\eeq
where $\be=\nu-1$ and $G(\beta,w)$ is defined above.

Hence let us specialize Theorem~\ref{thm:ED-Triang} to $q=0$. The polynomial
    $D(w,x)$ has degree 1 in $x$, and is independent of $w$:
$$
D(w,x)=\beta^2(1+4x).
$$
Let us now specialize the five generic non-differential equations
(\ref{P2Q1-T}-\ref{Tp1-T}). Equation~\eqref{P2Q1-T}  holds verbatim.
The identities~\eqref{Q0R-T} and~\eqref{Q1R-T}  give $R_0=-2\be$ and
$R_1=-8\be$, so that:
\beq\label{R-val-forest-T}
R(w,x)=-2\beta(1+4x)=-2D(w,x)/\beta.
\eeq
Equation~\eqref{Q1-PQt-bis} does not involve $\tT_1$, but it
expresses $P_1$ in terms of $Q_0$ and $Q_1$:
\beq\label{P1Q-q0}
-48\nu \be w-4P_1\nu ^2+4\nu Q_0+(Q_1-1)(Q_1+\nu -3)=0.
\eeq
Finally,~\eqref{Tp1-T} gives again $R_1=-8\be$, and  we still need to relate $\tT_1$ (or equivalently
$\tT_2=\tT_1/\nu$) to the series $P_j$ and $Q_j$. This can be
done by specializing~\eqref{T2prime} to $q=0$:
\beq\label{Tt1P2-q0}
8\be \tT_1'/\nu-4P_2+1=0.
\eeq

We now go back to the differential system~\eqref{deT}. We will expand its
numerator in powers of $X=x+1/4$ instead of $x$.
We write accordingly
$$
P(w,x)= X^3+\tilde P_2X^2+\tilde P_1X+\tilde P_0,
\qquad
Q(w,x)= 2\nu X^2+\tQ_1 X+\tilde Q_0, \qquad R(w,x)=-8\beta X,
$$
where the $\tP_j$'s and $\tQ_j$ are series in $w$ with rational
coefficients in $\nu$. Note that we only have five unknown series
instead of seven, because $R$ is completely determined
(see~\eqref{R-val-forest-T}). The remaining three generic identities,
namely~\eqref{P2Q1-T}, \eqref{P1Q-q0} and \eqref{Tt1P2-q0}, give
\beq\label{P2Q1-T-q0}
\tQ_1=\nu \tP_2 +1-\nu /2,
\eeq
\beq
4 \nu \tQ_0=   4 {\nu}^{2}\tP_1 +2 {\nu}^{2}\tP_2 -   { \tQ_1 }
  ^{2}- 4\be \tQ_1
  +48 {\nu}\be w-(7\nu-6)(\nu-2)/4,
\label{Q0P-T-q0}
\eeq
and
\beq\label{P2T1-T-q0}
2\nu \tP_2={4\be} \tT_1'-\nu.
\eeq
We will need the values of several series at $w=0$, which follow from the initial conditions~\eqref{init-T-thm}:
\beq\label{init-T-q0}
  \tP_0(0)=0,
\qquad \tP_1(0)=1/16,
\qquad \tP_2(0)= -1/2, \qquad
 \tQ_0(0)= (\nu-2)/8.
\eeq
Expanding in $X$ the numerator of~\eqref{deT} gives
a system of five differential equations between the five series
$\tP_0$, $\tP_1$, $\tP_2$, $\tQ_0$ and $\tQ_1$:
\begin{align*}
  2\tP_0 \tQ_0'-\tQ_0\tP_0'&=0,
\\
2\tQ_0'\tP_1+2\tP_0\tQ_1'-\tP_0'\tQ_1-\tP_1'\tQ_0 -8\beta \tP_1&=0,
\\
2\tQ_0'\tP_2+2\tP_1\tQ_1'-\tP_1'\tQ_1-\tP_2'\tQ_0 -2\nu\tP_0'-16\beta \tP_2&=0,
\\
2\tQ_1'\tP_2-\tQ_1\tP'_2-2\nu\tP_1'+2\tQ_0'-24 \beta&=0,
\\
\tQ_1'-\nu\tP'_2&=0.
\end{align*}

The fifth (and last) equation is solved by~\eqref{P2Q1-T-q0}.

The fourth one is a consequence of~\eqref{Q0P-T-q0} and~\eqref{P2Q1-T-q0}.

The first one is readily integrated into $\tP_0= c\, \tQ_0^2$, for some
constant $c$.  The initial conditions~\eqref{init-T-q0} give $c=0$: in other words, $\tP_0=0$ and $P(w,x)$ is a
multiple of $D(w,x)$.

In the third equation, let us replace $\tP_0$
by 0, then  $\tQ_0$ by its expression  derived
from~\eqref{Q0P-T-q0}, and  $\tQ_1$ by its expression~\eqref{P2Q1-T-q0}.
Finally, let us introduce the primitive $S$ of $\tP_2$:
\beq\label{P2-S-def}
S:=\int \tP_2\, \mathrm{d}w = \frac{2\beta}\nu \tT_1 -\frac w 2
\eeq
(the second equality follows from~\eqref{P2T1-T-q0}, with $T_1(0)=0$).
The resulting equation (in $\tP_1$ and $S$) can now be integrated
 into:
\beq\label{P1S-T-q0}
-2  \left( 2 \nu S'   +\nu-2
 \right) \tP_1  +\nu  {  S'  } ^{3}+ \left( \nu-2 \right)
{ S'  } ^{2}/2+48 w \beta S'  -80  \beta S   =0.
\eeq
We have used  the initial
conditions~\eqref{init-T-q0} to determine the integration constant.

This is as far as we have been able to  integrate the
system. We can now express $\tP_1$, $\tP_2$, $\tQ_1$, $\tQ_0$ in terms of $S$ and $S'$
using (in this order) \eqref{P1S-T-q0}, \eqref{P2-S-def},
\eqref{P2Q1-T-q0}, and~\eqref{Q0P-T-q0}. We plug these values in the
second equation of the system, together with  $\tP_0=0$,  and this
gives us a second
order equation for $S$, or, equivalently, for $\tT_1$:
$$
P(\tT_1, \tT_1', \tT_1'')=0
$$
with
\begin{multline*}
  P(X,Y,Z)=
48 \beta^4 w Z Y^4+8  \beta^3 w (\beta -8) Z Y^3-80 \beta^4 X Z Y^3-12
\beta^2 w(\beta -2)  Z Y^2+120 \beta^3 X Z Y^2
\\
+6 \beta^2 w Z Y-60
\beta^2 X Z Y+400 \beta^3 \nu  Z X^2+10 \beta Z X-80 \nu\beta^2 w  (\nu +3)
 Z X+4 \beta (\nu +3)^2 \nu  w^2 Z
\\-\nu  w Z
-192 \nu  w
\beta^3 Y^3-16 \nu \beta^2 w (\beta -14)  Y^2 +160 \beta^3 \nu  X Y^2
\\-4
\nu  \beta w (\beta ^2-\beta +16)  Y
+40 \nu \beta^2 (\beta -5)  X Y+2 \nu^2 w \beta-20 \nu \beta
     (\beta -3)          X.
\end{multline*}
We now return to~\eqref{T10G} to derive an equation for $G(\beta,w)$,
or equivalently for $W=2G-w/\be$.

To finish, one checks that the recurrence relation that underlies the
differential equation for $G$ determines all coefficients of this
series once  we prescribe $G=O(w^2)$.
\end{proof}

\section{The self-dual Potts model on general planar maps}
\label{sec:sd}
To finish, we derive a third order differential equation for the
self-dual Potts model defined at the end of  Section~\ref{sec:specializations}.

\begin{Theorem}
Let $S\equiv  S(\be,t)$ be the \gf\ for the  self-dual Potts model on
planar maps.
More precisely, let
$$
S(\be,t)=\be t^2\,
M(\be^2,\be+1,\be^{-1},t;1)=\sum_{M}t^{\ee(M)+2}
\sum_{S\subseteq E(M)}\beta^{2\cc(S)+\ee(S)-1-\vv(M)},
$$
where the first sum runs over all rooted planar maps $M$.
This series is characterized by the initial conditions
$S=t^2+O(t^3)$ and the following differential equation
of order $3$ and degree $4$:
$$
P(S,S', S'', S''')=0,
$$
where
\begin{multline*}
P(X,Y,Z,T)= 2 p (   \la \beta Y -2) (2  \beta Y -1)  (3   \la \beta X
 -\beta  t-4 t) T
-\beta ^2 p ^2 (3   \la\beta X -\beta  t-4 t) Z^3\\
-p \beta
(-2 \la \beta^2  p Y^2+\beta (16 \la t-\beta-6) Y+24  \la^2 \beta^2 X
Y+6\la \beta (\beta-10) X-2 \la\beta t+48 t+2)
Z^2
\\+2
 (\la \beta Y-2)
(
2 \la \beta^2  p Y^2-\beta (8 \beta^2 t+160 \beta t+\beta+288 t-10) Y
+48 \la^2 \beta^2  X Y-48 \la \beta^2  X\\
+8 \beta^2 t+80 \beta t+\beta+64 t-4)
Z
-8 (2  \beta Y -1) (  \la Y -1) (\la \beta  Y-2)^2,
\end{multline*}
with $\la=\be+2$ and $p= 8\la t-1$.
\end{Theorem}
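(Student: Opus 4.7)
The plan is to specialize Theorem~\ref{thm:ED} to the self-dual parameters $q=\be^2$, $\nu=\be+1$, $w=1/\be$ and then follow the strategy of Section~\ref{sec:q=4}. Writing $\la:=\be+2$, one computes $\be^2+q\nu = \be^2\la$, $wq+\be = 2\be$, and $q+2\be = \be\la$. The crucial observation is the vanishing
$$1+\nu - w(q+2\be) \;=\; \la - \la \;=\; 0,$$
which implies in particular that the leading coefficient $R_2 = \nu+1-w(q+2\be)$ of $R(t,x)$ equals zero; so $R(t,x)$ has degree one in $x$ rather than two, and several terms in the five generic identities~(\ref{P3Q1}--\ref{Mt1-expr}) disappear.

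Using this vanishing together with $\be^2+q\be+q=\be^2\la$, I would rewrite the five generic identities as follows. Equation~\eqref{P3Q1} collapses to $P_3=2Q_1$. Equation~\eqref{Mt1-expr} gives $R_1 = 2\be^2\la\,\tM_1' - 4$. Equation~\eqref{Q1R-M} simplifies, after dividing by $2\be^2\la$, to the clean identity $(\be-2)(Q_1+1) = R_0 + R_1/2$. Equation~\eqref{Q0R-M} then determines $Q_0$ linearly in $Q_1$ and $R_1$, with the quantity $p=8\la t-1$ of the theorem appearing naturally as the coefficient of $R_1$ (this is the mechanism by which $p$ enters the final differential equation). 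Finally,~\eqref{M11-PQ-encore} expresses $P_2$ in terms of $Q_0, Q_1$ and $\tM_1$. Combined with $P_4=Q_2=1$ and $R_2=0$, this leaves only four unknown series, say $P_0$, $P_1$, $P_3$ (with $Q_1=P_3/2$), and $\tM_1$.

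I would then expand the numerator of the differential equation~\eqref{de} in powers of $x$. The eight resulting coefficient-equations contain the five generic identities (now trivially satisfied) and three genuine differential equations relating the four remaining unknowns. By analogy with the $q=4$ treatment, I expect two of these equations to admit first integrals, yielding algebraic expressions for $P_0$ and $P_1$ in terms of $P_3$, $\tM_1$ and $\tM_1'$; the constants of integration are determined by the initial conditions~\eqref{init-general} specialized to our parameters (so that $P(0,x)=x^2(x-1)^2$, $Q(0,x)=x(x-1)$, $R_2=0$). This should leave a single differential equation between $P_3$ and $\tM_1$, of order one in $P_3$ and order two in $\tM_1$. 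Eliminating $P_3$ and $P_3'$ by resultants then produces a third-order differential equation in $\tM_1$, which one rewrites in terms of $S=\be\,\tM_1$ to obtain the equation of the theorem.

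The main obstacle is the identification of the two first integrals and the ensuing elimination step: both are in principle routine but rather heavy, and would be carried out in the accompanying Maple session. Uniqueness of $S$ as a power series satisfying the differential equation together with $S=t^2+O(t^3)$ follows, as in the previous special cases, from the fact that the underlying recurrence on the coefficients of $S$ determines them all from this initial data.
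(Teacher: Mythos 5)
Your overall plan (specialize Theorem~\ref{thm:ED} to $q=\beta^2$, $\nu=\beta+1$, $w=1/\beta$, note $R_2=0$, use the generic identities to reduce the number of unknowns, and then expand the differential system to reach a third-order ODE for $S$) is the right starting point, and your computations with the generic identities — including the clean form $(\beta-2)(Q_1+1)=R_0+R_1/2$ coming from~\eqref{Q1R-M} and the appearance of $p=8\lambda t-1$ in~\eqref{Q0R-M} — are correct. However, you pattern your endgame on the $q=4$ case, hoping for two first integrals that express $P_0$ and $P_1$ in terms of $P_3$, $\tM_1$, $\tM_1'$, followed by a resultant elimination of $P_3$. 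This imports an assumption that is not available here: in the $q=4$ case the integrability was driven by the fact that $D(t,x)$ collapses to a perfect square independent of $t$, and the paper explicitly remarks that in the self-dual case ``$D(t,x)$ does not simplify drastically.'' There is no reason the same first-integral structure appears, and you give none.

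What you miss is the parity argument, which is the actual key simplification in the self-dual case. Writing everything in the shifted variable $X=x-1/2$, the polynomial $D$ becomes an \emph{even} function of $X$, and the initial conditions $P(0,x)=x^2(x-1)^2$, $Q(0,x)=x(x-1)$ become even in $X$ as well (while $R$ is determined by an odd initial behavior because $R_2=0$). One then checks that $(\tP(t,-X),\tQ(t,-X),-\tR(t,-X))$ solves the same differential system with the same initial conditions; by the uniqueness proved in Section~\ref{sec:unique-general}, $\tP$ and $\tQ$ are even in $X$ and $\tR$ is odd. This forces $\tP_1=\tP_3=\tQ_1=\tR_0=0$, i.e.\ $P_3=-2$, $Q_1=-1$, and $R_0=-R_1/2$, so two of your ``unknowns'' ($P_3$ and hence $Q_1$) are in fact constants. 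After applying your generic identities one is left with a \emph{single} unknown series $\tP_0$ alongside $S$, the expanded system collapses to four equations in even powers of $X$, one of which vanishes identically and the remaining three are $\qs(\beta)$-linearly dependent; eliminating $\tP_0$ between two of them produces the third-order equation directly, with no resultant elimination of a catalytic unknown $P_3$ required. Without this observation your argument has a genuine gap: you carry $P_3$ through to the end as an unknown that needs eliminating, and you have not shown that the first integrals you expect exist.
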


\begin{proof}
We  specialize Theorem~\ref{thm:ED} to $q=\be^2$, $w=1/\be$. The argument
of Section~\ref{sec:unique-general} proves that in this case as well,
the differential system and its initial conditions define uniquely
the series  $P_j$, $Q_j$ and $R_j$. Observe that one of the initial
conditions reads $R_2=0$, so that $R(t,x)$ has degree~1 in $x$.
This is the key simplification in the self-dual Potts model.

The polynomial $D(t,x)$ does not simplify drastically.
With the notation of the theorem, it reads:
$$
D(x,t)=\la {\beta}^{2}  {X}^{2}+ (\beta-2
)  p {\beta}^{2}/4,
$$
with $X=x-1/2$. We write accordingly
\beq\label{PQRt-sd}
P(t,x)= \tP(t,X)=X^4 +\tP_3 X^3+\tilde P_2X^2+\tilde P_1X+\tilde P_0,
\eeq
\beq\label{PQRt-sd-bis}
Q(t,x)= \tQ(t,X)= X^2+\tQ_1 X+\tilde Q_0, \qquad \hbox{and} \qquad R(t,x)=\tR(t,X)=\tR_1 X+\tR_0.
\eeq
The initial conditions at $t=0$ are even functions of $X$:
$$
P(0,x)= (X^2-1/4)^2, \qquad Q(0,x)=X^2-1/4.
$$
The other two initial conditions
read $\tP_4=1$ and $\tR_2=0$, and we have already taken them into
account in~\eqref{PQRt-sd} and~\eqref{PQRt-sd-bis}. But then it is easy to check that
$(\tP(t,-X), \tQ(t,-X), -\tR(t,-X))$ is another solution of the
differential system, satisfying the same initial conditions.
The uniqueness of the solution
implies that $\tP$ and $\tQ$ are even functions of $X$, while $\tR$
must be odd. Hence
$$
\tP_1= \tP_3=\tQ_1=\tR_0=0,
$$
and we are left with four unknown series only.

With these values, the  generic equations~\eqref{P3Q1} and~\eqref{Q1R-M}
automatically hold, and the other three reduce to:
\beq\begin{cases}\label{conj2}
4\la\tQ_0+ p(\tR_1-\be+2)&=0,
\\
12\be\la S+2\tQ_0- \tP_2-4(\be+4)t&=0,
\\
-2\be\la S' +\tR_1+4&=0,
\end{cases}
\eeq
where
 $S=\be \tM_1 =\be t^2 M_1$ is the series defined in the theorem.

Expanding in $X$ the numerator of~\eqref{de-encore} gives a system of four
equations in the four series $\tP_0$, $\tP_2$, $\tQ_0$, $\tR_1$,
corresponding to the coefficients of $X^{2i}$ for $i$ from 0 to 3. The
other coefficients vanish due to the parity properties.

In these four equations, let us replace $\tP_2$, $\tQ_0$ and $\tR_1$
by their expressions in terms of $S$ derived from~\eqref{conj2}. The
fourth equation then vanishes.
The three others are found to be linearly dependent (over
$\qs(\be)$), and we consider those corresponding to the coefficients
of $X^0$ and $X^4$:
$$
4( p \beta  S '' +4 \la \beta   S '-8 ) \tP _0
=
p(2  \beta  S '-1 ) \tP _0'
$$
and
$$
 4   \tP _0'=(2 \beta ^2 p^2 S '-p \beta  (48 \la \beta S -8
\beta  t-48 t-1)) S '' +8 \la \beta ^2  p S'^2-4 \beta
(\beta +6)
pS'+64 \beta  t+128 t-8.
$$
It remains to eliminate $\tP_0$ to obtain the differential equation
of the theorem. To conclude, one checks that the underlying
recurrence relation on the coefficients of $S(t)$ determines all of
them once the coefficient of $t^2$  is prescribed.
\end{proof}

\section{Final comments}

At the moment, our solution does not solve the important question of
locating phase transitions and finding critical exponents of the Potts
model on planar maps. Partial results are known, for instance for the
Ising model~\cite{BK87,Ka86,bernardi-mbm-alg}, for maps equipped with
a spanning forest~\cite{bernardi-mbm-alg}, and
for properly coloured triangulations~\cite{odlyzko-richmond}. The latter work exploits
Tutte's differential equation~\eqref{Tutte-ED}. Moreover, a parametrized description
of the critical value of $\nu$ (depending on $q$) for planar maps is given in~\cite{borot3},
for $0<q<4$. The analysis builds on the catalytic variable that
records the degree of the outer face rather than on the size variable,
but this should not influence the location of the critical point. Also
the results of~\cite{eynard-bonnet-potts} involve the dependence of the series in this
catalytic variable.

It is natural to ask if the Potts \gf\ might also satisfy a \emm
linear, differential equation. This would make the analysis of
critical points much easier, since finding the location and nature of the
singularities would then be (almost) automatic~\cite{flajolet-sedgewick}. However, this
possibility has been ruled out (for triangulations) by an asymptotic
argument in~\cite{mbm-courtiel}.

The second order equations that we have obtained for triangulations in
special cases (Sections~\ref{sec:nu=0} to~\ref{sec:q=0}) probably deserve a further
treatment. Do they fit in Painlev\'e's classification?

Finally, our proofs are very heavy, and it would be nice to establish
differential algebraicity by simpler, and ideally more combinatorial means. This has been done in
several special cases (Ising~\cite{mbm-schaeffer-ising,bouttier-mobiles}, spanning trees~\cite{mullin-boisees}, spanning
forests~\cite{mbm-courtiel}, bipolar orientations~\cite{bonichon-mbm-fusy,mbm-survey}). Two particularly attractive
problems are three-coloured maps (known to be algebraic~\cite{bernardi-mbm-alg}) and
properly coloured triangulations (Tutte's recurrence
relation~\eqref{Tutte-ED}).

\bigskip

\bigskip\noindent
{\bf Acknowledgements.} We are grateful to Jérémie Bouttier for his
explanations on~\cite{borot3}, and to Alexey Ovchinnikov and Guillaume
Rond  for their help with differential systems.
We also acknowledge interesting discussions with Ga\"etan Borot and Paul Zinn-Justin.


\bibliographystyle{plain}
\bibliography{coloured.bib}

\end{document}